\title{The Donaldson geometric flow is a local smooth semiflow}
\author{
  Robin~S.~Krom\thanks{Partially supported by the
  Swiss National Science Foundation Grant 200021-127136}
  \\
  ETH Z\"urich
}
\date{29 December 2015}
\newtheorem{PARA}{}[section]
\newtheorem{theorem}[PARA]{Theorem}
\newtheorem{corollary}[PARA]{Corollary}
\newtheorem{lemma}[PARA]{Lemma}
\newtheorem{proposition}[PARA]{Proposition}
\newtheorem{remark}[PARA]{Remark}
\newcommand{\MAT}[1]{\left[\begin{array}{#1}}
\newcommand{\RIX}{\end{array}\right]} %
\newcommand{\p}{\partial}
\newcommand{\C}{{\mathbb C}}
\newcommand{\N}{{\mathbb N}}
\newcommand{\R}{{\mathbb R}}
\newcommand{\cH}{{\mathcal H}}
\newcommand{\cL}{{\mathcal L}}
\newcommand{\cP}{{\mathcal P}}
\newcommand{\sE}{\mathscr{E}}
\newcommand{\sF}{\mathscr{F}}
\newcommand{\sR}{\mathscr{R}}
\newcommand{\sS}{\mathscr{S}}
\newcommand{\sT}{\mathscr{T}}
\newcommand{\sU}{\mathscr{U}}
\newcommand{\sX}{\mathscr{X}}
\newcommand{\sY}{\mathscr{Y}}
\newcommand{\sZ}{\mathscr{Z}}
\newcommand{\Om}{{\Omega}}
\newcommand{\om}{{\omega}}
\renewcommand{\phi}{{\varphi}}
\newcommand{\fhat}{{\widehat{f}}}
\newcommand{\Khat}{{\widehat{K}}}
\newcommand{\rhohat}{{\widehat{\rho}}}
\newcommand{\rhobar}{{\bar{\rho}}}
\newcommand{\etahat}{{\widehat{\eta}}}
\newcommand{\etatilde}{{\widetilde{\eta}}}
\newcommand{\zetahat}{{\widehat{\zeta}}}
\newcommand{\zetatilde}{{\widetilde{\zeta}}}
\newcommand{\rhotilde}{{\widetilde{\rho}}}
\newcommand{\Ktilde}{{\widetilde{K}}}
\newcommand{\Ltilde}{{\widetilde{L}}}
\newcommand{\phihat}{{\widehat{\phi}}}
\newcommand{\tauhat}{{\widehat{\tau}}}
\newcommand{\thetahat}{{\widehat{\theta}}}
\newcommand{\starhat}{{\widehat{*}}}
\newcommand{\sXhat}{{\widehat{\sX}}}
\newcommand{\sYhat}{{\widehat{\sY}}}
\newcommand{\sZhat}{{\widehat{\sZ}}}
\newcommand{\id}{{\mathrm{id}}}
\newcommand{\Vol}{{\mathrm{Vol}}}
\newcommand{\Aut}{{\mathrm{Aut}}}
\newcommand{\Vect}{{\mathrm{Vect}}}
\newcommand{\norm}{{\rm norm}}
\newcommand{\dvol}{{\rm dvol}}
\newcommand{\fp}{{\mathfrak{p}}}
\newcommand{\CP}{{\C\mathrm{P}}}
\newcommand{\inner}[2]{\langle #1, #2\rangle}
\def\NABLA#1{{\mathop{\nabla\kern-.5ex\lower1ex\hbox{$#1$}}}}
\def\Nabla#1{\nabla\kern-.5ex{}_{#1}}
\def\Abs#1{\left|#1\right|}
\def\norm#1{\mathopen\|#1\mathclose\|}
\renewcommand{\p}{{\partial}}
\begin{document}
\maketitle
\begin{abstract}
  We prove the local existence of unique smooth solutions of the Donaldson
  geometric flow on the space of symplectic forms on a closed smooth
  four-manifold, representing a fixed cohomology class.  It is a semiflow on
  the Besov space $B^{1,p}_2(M, \Lambda^2)$ for $p > 4$. The Donaldson
  geometric flow was introduced by Simon Donaldson in~\cite{DON1}.  For a
  detailed exposition see~\cite{KROMSAL}.
\end{abstract}

\tableofcontents


\section{Introduction}\label{sec:intro}

Let $M$ be a smooth closed Riemannian four-manifold. Denote by $g$ the
Riemannian metric, denote by $\dvol\in \Om^4(M)$ the volume form of $g$ and let
$*:\Om^k(M) \to \Om^{n-k}(M)$ be the Hodge $*$-operator associated to the
metric and orientation. Let $\om$ be a symplectic form on $M$ compatible with
the metric and let $\sS_a$ be the space of symplectic forms representing the
cohomology class $a = [w] \in H^2(M, \R)$. This is formally an infinite
dimensional manifold and the tangent space at any element $\rho\in \sS_a$ is
the space of exact two-forms. The Donaldson geometric flow on $\sS_a$ is given
by the evolution equation
\begin{equation}\label{eq:DONFLOW}
  \frac{d}{dt}\rho_t = d *^{\rho_t} d \Theta^{\rho_t},
\end{equation}
where
\begin{equation}\label{eq:theta_u_star}
  \Theta^\rho := *\frac{\rho}{u} -
  \frac12\Abs{\frac{\rho}{u}}^2\rho,\qquad \frac12 \rho\wedge \rho =:
  u \dvol,\qquad *^\rho \lambda := \frac{\rho\wedge *\left( \rho\wedge
    \lambda
  \right)}{u}
\end{equation}
for all one-forms $\lambda\in \Om^1(M)$. The operator $*^\rho$ is the Hodge
star operator for the metric $g^\rho$ which is uniquely determined by the
conditions
$$
\dvol_{g^\rho} = \dvol,\qquad  *^\rho \left( \om -
\frac{\om \wedge \rho}{\dvol_\rho} \rho \right) = \left( \om - \frac{\om \wedge
\rho}{\dvol_\rho} \rho \right)\Leftrightarrow*\om = \om
$$
for all $\om \in \Lambda^2T^*M$. Each $\rho \in \Om^2$ with $\rho^2>0$
determines an inner product $\inner{\cdot}{\cdot}_\rho$ on the space of exact
two-forms defined by
\begin{equation}\label{eq:metric}
  \inner{\rhohat_1}{\rhohat_2}_\rho := \int_M \lambda_1 \wedge *^\rho \lambda_2,
\end{equation}
These inner products determine a metric on the infinite dimensional space
$\sS_a$ called the Donaldson metric.
The Donaldson geometric flow is the negative gradient
flow with respect to the Donaldson metric of the energy functional $\sE:\sS_a
\to \R$ defined by
\begin{equation}\label{eq:energy}
  \sE(\rho) := \int_M \frac{2\Abs{\rho^+}^2}{\Abs{\rho^+}^2 -
  \Abs{\rho^-}^2}, \qquad \rho \in \sS_a.
\end{equation}
The Donaldson flow has a beautiful geometric origin laid out in Donaldson's
paper \cite{DON1}. The key idea is that the space of diffeomorphisms of a
hyperK\"ahler surface has the structure of an infinite dimensional
hyperK\"ahler manifold. The group of symplectomorphisms with respect to a
preferred symplectic structure $\om$ then acts by composition on the right and
this group action is generated by a hyperK\"ahler moment map. In analogy with
the finite dimensional case one then studies the negative gradient flow to the
moment map square functional with respect to the $L^2$-inner product. If we
push the preferred symplectic structure $\om$ forward by the diffeomorpisms of
$M$ to the space of symplectic structures in a fixed cohomology class, we
obtain the Donaldson flow \eqref{eq:DONFLOW}. If we push the moment map square
forward we obtain the energy functional $\sE:\sS_a \to \R$ in
\eqref{eq:energy} and if we push the $L^2$-metric forward we obtain the
Donaldson metric~\eqref{eq:metric}.  The Donaldson flow remains the negative
gradient flow to this energy functional with respect to the Donaldson metric
\eqref{eq:metric} on the space of symplectic structures in a fixed cohomology
class. The Donaldson flow remains well defined for a general symplectic
4-manifold $(M, \om)$ equipped with a compatible Riemannian metric $g$.

The motivation to study the Donaldson flow on the space of symplectic
structures comes from the longstanding open uniqueness problem for symplectic
structures on closed four-manifolds (see \cite{SAL} for an exposition). The
hope is that the Donaldson flow provides a tool to settle this question at
least in some favorable cases, such as the hyperK\"ahler surface and the
complex projective plane $\CP^2$. This hope is strengthened by the observation
that the preferred symplectic structure $\om$ is the unique absolute minimum
and that the Hessian of the energy functional $\sE$ is positive definite at
the absolute minimum (see \cite{KROMSAL}). In the case of $M=\CP^2$ we can
further show that the Fubini-Study form is the only critical point of the
flow. Thus, if longtime existence and convergence of the Donaldson flow can be
established, the Donaldson flow would provide a proof for the uniqueness of
the symplectic structures on $\CP^2$ of a given cohomolgy class up to isotopy.
In the case of the hyperK\"ahler surface Donaldson \cite{DON1} proved that the
higher critical points are not strictly stable. The idea would then be to
perturb the flow near the critical points such that eventually the perturbed
flow converges towards the unique absolute mimimum.

The main results of this paper are regularity for the critical points and the
regularity for the flow (section~\ref{sec:regularity}), short time existence
(section~\ref{sec:ste}) and the semiflow property
(section~\ref{sec:semiflow}). These results lay the foundation for future
studies focusing on longtime existence and the problem of solutions
\emph{escaping to infinity}.  Section~\ref{sec:K} contains the main geometric
ideas, in particular a result by Donaldson \cite{DON2} that shows that the map
$\rho \mapsto \frac{\rho^+}{u}$ from the space of symplectic forms
representing a cohomology class in a fixed affine space in $H^2(M; \R)$ with
dimension equal to $b_2^+$ to the space of self-dual two-forms is locally a
Banach space diffeomorphism. In section~\ref{sec:reducedEvoEq} we then study
the evolution of $\frac{\rho^+}{u}$ if $\rho$ evolves by the Donaldson flow.
This evolution equation is the key to prove regularity of the Donaldson flow.
Sections~\ref{sec:ste} and~\ref{sec:semiflow} are independent of the previous
ones on regularity and can be read separately. In section~\ref{sec:ste} we use a
standard argument involving Banach's fixed point theorem to prove short time
existence in an appropriate Sobolev completion of the space of two-forms. In
section~\ref{sec:semiflow} we prove the semiflow property of the flow on the
related interpolation space. In particular we can use the local stable
manifold theorem to show that there exists an attracting neighborhood around
the absolute minimum in the topology of this space. The appendix deals with
products and compositions of functions in Sobolev spaces.

{\bf Notation and Conventions.}\label{notation}
Let $(M, g)$ be a closed Riemannian oriented four-manifold.  Let $\pi: E
\rightarrow M$ be a natural $k$-dimensional vector bundle over $M$.  We denote
by $W^{\ell,p}(M, E)$ the Sobolev completion of the space of sections
$\Om^0(M, E)$. If the bundle in question is clear from the context we will
just write $W^{\ell,p}$ instead of $W^{\ell,p}(M, E)$.  We will suppress the
constants that solely depend on the parameters $\text{dim}(M), \text{vol}(M),
k, p$ from the notation when we make estimates in Sobolev norms.  We denote by
$\sS$ the space of smooth symplectic structures on $M$ compatible with the
orientation. We write $\sS_a$ for $\rho \in \sS$ that represent a fixed
cohomology class $a\in H^2(M; \R)$. Let $L \in H^2(M; \R)$ be an affine
subspace. Then $\sS_L$ denotes the subset of $\sS$ such that the symplectic
structures represent cohomology classes in $L$. We define
\begin{gather*}
  \sS^{k,p} := \{\rho \in W^{k,p}(M, \Lambda^2) |\, \rho\wedge \rho > 0,\
d\rho = 0\}.
\end{gather*}
A cohomology class or an affine subspace of cohomology classes as
subscript has the same meaning as in the smooth counterpart.


\section{The Map \texorpdfstring{$K$}{K}}\label{sec:K}
In this section we study the map
$$
\sS \to \Om^+: \qquad \rho \to \frac{\rho^+}{u} ,\qquad u:= \frac{\rho\wedge
\rho}{2\dvol}
$$
from the space of symplectic forms to the space of self-dual two-forms.  We
apply a theory developed by Donaldson \cite{DON2} on elliptic problems for
closed two-forms on four dimensional closed manifolds that fullfill a point
wise constraint with `negative tangents'. The main insight is that this map is
a local Banach space diffeomorphism between appropriatly choosen spaces and
the regularity of $\frac{\rho^+}{u}$ determines the regularity of $\rho$. The
precise statement is given in Theorem~\ref{thm:K}.

We will use the construction of a Riemannian metric determined by a
nondegenerate two-form and a volume form explained in~\cite{KROMSAL}. Here is
a brief overview. Fix a nondegenerate two-form $\rho\in \Om^2(M)$ such that
$\rho\wedge \rho >0$. There exists a Riemannian metric $g^\rho$ such that its
volume form agrees with $\dvol$ of $(M,g)$. The associated Hodge star operator
$*^\rho: \Om^1(M) \to \Om^3(M)$ is given by
$$
*^\rho \lambda = \frac{\rho \wedge *\left(\rho\wedge\lambda\right)}{u}.
$$
If $X\in \Vect(M)$ is a vector field then
$$
*^\rho \rho(X,\cdot) = -\rho \wedge g(X,\cdot).
$$
The map
\begin{equation}\label{def:Rrho}
  R^\rho: \Om^2(M) \to \Om^2(M), \qquad R^\rho \om := \om - \frac{\om \wedge
  \rho}{\dvol_\rho}\rho
\end{equation}
is an involution that preserves the exterior product, acts as the identity on
the orthogonal complement of $\rho$ with respect to the exterior product and
it sends $\rho$ to $-\rho$. Moreover it maps $\Om^+$ to $\Om^{+^\rho}$, the
self-dual forms with respect to the metric $g^\rho$.  The Hodge star operator
$*^\rho : \Om^2(M) \to \Om^2(M)$ associated to $g^\rho$ is given by
$$
*^\rho \om = R^\rho * R^\rho \om.
$$
Let $\om\in \Om^2(M)$ be a self-dual two-form and let $J: TM\to TM$ be an
almost complex structure such that $g = \om(\cdot, J\cdot)$. We define the
almost complex structure $J^\rho$ by
\begin{equation}\label{def:Jrho}
  \rho(J^\rho \cdot, \cdot) := \rho(\cdot, J\cdot)
\end{equation}
and a self-dual two-form $\om^\rho$ with respect to $g^\rho$ by
\begin{equation}\label{def:OmRho}
  \om^\rho := R^\rho \om.
\end{equation}
Then
$$
g^\rho = \om^\rho(\cdot, J^\rho\cdot)
$$
and
\begin{equation}\label{eq:starRhoJrhoIdentity}
  *^\rho ( \lambda \wedge \om^\rho) = \lambda \circ J^\rho
\end{equation}
for all one-forms $\lambda \in \Om^1(M)$.

Fix a symplectic form $\rho \in \sS_a$. Let $\sS_{a+\cH^{\rho}}$ be the
space of symplectic forms representing a cohomology class in the affine space
$a + \cH^{\rho} \subset H^2(M, \R)$, where $\cH^{\rho}$ are the harmonic self-dual
forms with respect to the Hodge star operator
$*^{\rho}$. Define the map $K: \sS_{a+\cH^{\rho}} \to \Om^+$ by
$$
K(\rho) := \frac{\rho + * \rho}{2u}, \qquad u = \frac{\rho \wedge \rho}{2 \dvol}.
$$
Denote the extension of this map to the Sobolev space $\sS^{k,p}_{a+\cH_\rho}$
also by $K$.
\begin{theorem}[{\bf The Map $K$}]\label{thm:K}
  Let $k - \frac{4}{p}> 0$.

  \smallskip\noindent{\bf (i)}
  For every $\rho \in \sS^{k,p}_{a + \cH_{\rho}}$
  there exists a $W^{k,p}$-neighborhood of $\rho$ such that $K$
  restricted to this neighborhood is a diffeomorphism of Banach spaces.

  \smallskip\noindent{\bf (ii)}
  Let $\cH \in H^2(M;\R)$ be a positive linear subspace and $a\in H^2(M; \R)$. The map
  $K: \sS_{a + \cH} \to \Om^+$ is injective.

  \smallskip\noindent{\bf (iii)}
  There exists polynomials $\fp_1, \fp_2$ with positive coefficients
  with the following significance. If $\rho \in \sS^{k,p}$ and $K(\rho) \in
  W^{k+1,p}(M,\Lambda^+)$ with $\frac{1}{u} \leq C < \infty$, then $\rho \in
  \sS^{k+1,p}$ and
  \begin{multline*}
    \norm{\rho}_{W^{k+1,p}}
    \leq
    \fp_1\left(C, \norm{\rho}_{L^\infty}  \right)
    \norm{\frac{\rho^+}{u}}_{W^{k+1,p}}\\
    + \fp_2\left(C,
    \norm{\rho}_{L^\infty}, \norm{\rho}_{W^{k-1,p}} ,
    \norm{\frac{\rho^+}{u}}_{W^{k,p}}\right)\norm{\rho}_{W^{k,p}}
  \end{multline*}
\end{theorem}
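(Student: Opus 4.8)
\emph{Overview of the strategy.} All three parts hinge on one algebraic feature of the pointwise constraint $\rho^+/u=\sigma$, namely Donaldson's \emph{negative tangent} property: a closed two-form $\rhohat$ whose self-dual part lies in the tangent cone to the constraint satisfies $\rhohat\wedge\rhohat\le0$ pointwise, while $\int_M\rhohat\wedge\rhohat=([\rhohat])^2$ depends only on the cohomology class. Together with positivity of the relevant subspace of $H^2(M;\R)$ this forces $[\rhohat]=0$ and, after inspecting the case of equality, $\rhohat=0$. The second ingredient is that $K$ is assembled from the algebraic operations $\rho\mapsto(\rho^+,u,1/u)$ and products, so by the Sobolev multiplication and composition estimates of the appendix — available because $k-\tfrac4p>0$ makes $W^{k,p}$ a Banach algebra and $u$ is bounded away from zero on the compact $M$ — it is smooth between the relevant Banach manifolds, and likewise at level $k+1$. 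The analytically substantive point for (i) and (ii) is the negativity inequality just described; the real work in (iii) is the bookkeeping with tame estimates needed for the precise polynomial constants.

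\smallskip\noindent\emph{Proof of (i).} The domain is a Banach manifold with $T_\rho\sS^{k,p}_{a+\cH_\rho}=dW^{k+1,p}(M,\Lambda^1)\oplus\cH_\rho$, and $K$ maps it smoothly into $W^{k,p}(M,\Lambda^+)$; by the inverse function theorem it suffices to prove $dK(\rho)$ is an isomorphism. Differentiating, and using $\tfrac{d}{dt}u=\inner{\rho^+}{\rhohat^+}-\inner{\rho^-}{\rhohat^-}$, one gets
\[
dK(\rho)\rhohat=\frac{\rhohat^+}{u}-\frac{\rho^+}{u^2}\bigl(\inner{\rho^+}{\rhohat^+}-\inner{\rho^-}{\rhohat^-}\bigr).
\]
Writing the exact part of $\rhohat$ as $d\lambda$ with $\lambda$ coexact and adjoining the Coulomb equation $d^*\lambda=0$, this becomes an elliptic first-order operator of the same type as the folded-up deformation operator $d^+\oplus d^*$ of the anti-self-dual equations, hence Fredholm. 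For injectivity, $dK(\rho)\rhohat=0$ gives $\rhohat^+=t\,\sigma$ with $t:=\inner{\rho^+}{\rhohat^+}-\inner{\rho^-}{\rhohat^-}$; solving for $t$ using $\inner{\rho^+}{\sigma}=u\Abs{\sigma}^2$ and $u\Abs{\sigma}^2=\Abs{\rho^-}^2/u+2$ yields $t=u\inner{\rho^-}{\rhohat^-}/(u+\Abs{\rho^-}^2)$, and Cauchy--Schwarz then gives
\[
\Abs{\rhohat^+}^2\le\frac{\Abs{\rho^-}^2\bigl(\Abs{\rho^-}^2+2u\bigr)}{\bigl(\Abs{\rho^-}^2+u\bigr)^2}\,\Abs{\rhohat^-}^2\le\Abs{\rhohat^-}^2
\]
pointwise, with strict inequality wherever $\rhohat^-\ne0$. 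Integrating, $([\rhohat])^2\le0$, so $\rhohat$ is exact ($\cH_\rho$ being positive), the inequality is an equality, whence $\rhohat^-\equiv0$ and then $\rhohat^+\equiv0$. Finally the index is zero: $d^+$ restricted to coexact one-forms has trivial kernel (an exact anti-self-dual two-form $\beta$ satisfies $0=([\beta])^2=\int_M\beta\wedge\beta=-\int_M\Abs{\beta}^2$, so $\beta=0$) and cokernel the harmonic self-dual forms, of dimension $b_2^+=\dim\cH_\rho$, while the zeroth-order terms of $dK(\rho)$ are a compact perturbation. An injective Fredholm operator of index zero is an isomorphism, which proves (i).

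\smallskip\noindent\emph{Proof of (ii).} If $\rho_0,\rho_1\in\sS_{a+\cH}$ both map to $\sigma$, then $\rho_i^+=u_i\sigma$ and $\Abs{\rho_i^-}^2=u_i^2\Abs{\sigma}^2-2u_i$, so $(\rho_1-\rho_0)^+=(u_1-u_0)\sigma$ and
\[
\frac{(\rho_1-\rho_0)\wedge(\rho_1-\rho_0)}{\dvol}=-2u_0u_1\Abs{\sigma}^2+2(u_0+u_1)+2\inner{\rho_1^-}{\rho_0^-}.
\]
By Cauchy--Schwarz $\inner{\rho_1^-}{\rho_0^-}\le\Abs{\rho_0^-}\,\Abs{\rho_1^-}$, and with $a=u_0$, $b=u_1$, $s=\Abs{\sigma}^2$ one has $\sqrt{ab}\,\sqrt{(as-2)(bs-2)}\le abs-a-b$ (both sides are nonnegative since $as,bs\ge2$, and squaring the difference gives $(a-b)^2$). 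Hence the integrand is $\le0$ pointwise, while its integral equals $([\rho_1]-[\rho_0])^2\ge0$ because $\cH$ is positive; so $[\rho_1]=[\rho_0]$, $\rho_1-\rho_0$ is exact, and the integrand vanishes identically. Equality forces $u_0=u_1$ and $\rho_0^-,\rho_1^-$ pointwise parallel of equal length, so $\rho_1-\rho_0=\rho_1^--\rho_0^-$ is exact and anti-self-dual, hence zero.

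\smallskip\noindent\emph{Proof of (iii).} Here $\rho\in\sS^{k,p}$ and $\sigma:=\rho^+/u\in W^{k+1,p}$ are given with $1/u\le C$; then $\Abs{\sigma}_{L^\infty}\le C\Norm{\rho}_{L^\infty}$ and $\Abs{\sigma}$ is bounded below in terms of $\Norm{\rho}_{L^\infty}$. Expressing the exact part of $\rho$ through a Coulomb-gauged primitive, the equations $K(\rho)=\sigma$ and $d\rho=0$ present $\rho$, modulo its finite-dimensional harmonic part, as a solution of a first-order system whose linearisation is the elliptic operator of (i); its principal symbol does not involve $\rho$ or $\sigma$, and its lower-order coefficients $1/u$, $\rho^+/u^2$, $\rho/u$ lie in $W^{k,p}$ with norms bounded, via the appendix estimates, by $\fp_1(C,\Norm{\rho}_{L^\infty})\Norm{\rho}_{W^{k,p}}+\const$, so the system is uniformly elliptic. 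Standard bootstrapping — differentiate the equation, or take difference quotients — then gains one derivative: since $\rho\in W^{k,p}$, the tame elliptic estimate fed by $\partial_e\sigma\in W^{k,p}$ gives $\partial_e\rho\in W^{k,p}$ in every direction $e$, i.e.\ $\rho\in W^{k+1,p}$. Tracking the norms through the tame elliptic estimate and the appendix product estimates — so that the top-order datum $\Norm{\partial_e\sigma}_{W^{k,p}}\le\Norm{\sigma}_{W^{k+1,p}}$ is met only by a factor of $\fp_1(C,\Norm{\rho}_{L^\infty})$, while each occurrence of $\Norm{\rho}_{W^{k,p}}$ stays linear with coefficient polynomial in $C,\Norm{\rho}_{L^\infty},\Norm{\rho}_{W^{k-1,p}},\Norm{\sigma}_{W^{k,p}}$ — yields the stated inequality. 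This last piece of bookkeeping, together with the borderline regularity of the coefficients for small $k$, is the only delicate point.
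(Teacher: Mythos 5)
Your parts (ii) and (iii) follow essentially the paper's own route. Part (ii) is the pointwise Cauchy--Schwarz computation of Lemma~\ref{lem:negativeChords}, folded directly into the argument instead of being stated as a normalized lemma, and then integrated against the positivity of $\cH$; your algebra (including the equality analysis via exact anti-self-dual forms) is correct. Part (iii) is the same differentiate-the-relation-and-bootstrap scheme that the paper runs with Lie derivatives, Lemma~\ref{cor:keyRegularity} and the elliptic operator $d+d^{*^\rho}$; like the paper you leave the final polynomial bookkeeping at the level of assertion. In (i) your injectivity argument is genuinely different and attractive: instead of the paper's Hodge decomposition $\rhohat=d\lambda+h$ and integration by parts, you derive the pointwise bound $\Abs{\rhohat^+}^2\le c\,\Abs{\rhohat^-}^2$ with $c<1$ (an infinitesimal negative-chords inequality) and invoke positivity of $\cH_\rho$; that part is sound.

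The gap is in your surjectivity step in (i). You obtain index zero by asserting that, in Coulomb gauge, $dK(\rho)$ differs from the model $d^+\oplus d^*$ only by zeroth-order, hence compact, terms. That is false: the principal symbol of $\lambda\mapsto dK(\rho)\,d\lambda$ is $\lambda\mapsto\frac{1}{u}R^\rho(\xi\wedge\lambda)^{+^\rho}$ (equivalently your explicit formula applied to $\xi\wedge\lambda$), which depends on $\rho$ through $u$, $R^\rho$ and the $g^\rho$-self-dual projection, so the difference from $d^+$ is a first-order operator and not compact. The index-zero conclusion is true, but as written it is unjustified; you need either homotopy invariance of the index (strip off the bundle isomorphism $\frac1uR^\rho$ and deform $g^\rho$ to $g$ through metrics, checking the mixed symbol $\lambda\mapsto((\xi\wedge\lambda)^{+_t},\langle\xi,\lambda\rangle)$ stays injective), or --- much simpler --- the paper's direct surjectivity: given $\eta\in W^{k,p}(M,\Lambda^+)$, write $uR^\rho\eta=d\lambda+h$ by Hodge theory for $*^\rho$ and verify $\Khat(d\lambda+h)=\eta$, which dispenses with Fredholm theory altogether. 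The same misstatement recurs in (iii) (``its principal symbol does not involve $\rho$''): there too the coefficients $1/u$, $\rho^\pm$ sit in the principal part. This is harmless for the bootstrap itself, since $1/u\le C$ and $\rho\in W^{k,p}\hookrightarrow C^0$ keep the differentiated system uniformly elliptic with $W^{k,p}$ coefficients, but it should be corrected; and note that it is exactly the structure supplied by Lemma~\ref{cor:keyRegularity} --- the top-order datum $\cL\frac{\rho^+}{u}$ enters multiplied only by pointwise, $L^\infty$-controlled quantities --- that produces the specific polynomial form of the estimate you are asked to prove.
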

\begin{proof}
  See page \pageref{proof:K}.
\end{proof}

We will need the following three lemmas to prove Theorem~\ref{thm:K}.

\begin{lemma}[{\bf Negative chords}] \label{lem:negativeChords}
  Let $V$ be a real four-dimensional vector space equipped with the standart
  metric, $\theta \in \Lambda^+V$ with $\theta^2 = \dvol$ and $\rho_1,\rho_2 \in
  \Lambda^2V$ such that
  \begin{gather*}
    \rho_i \wedge \rho_i = \dvol, \qquad \rho_i^+ = \lambda_i
    \theta,\qquad \lambda_i \geq 1, \qquad i=1,2.
  \end{gather*}
  Then
  $$
  \left( \rho_1 - \rho_2 \right)^2 \leq 0,
  $$
  and
  $$
  \left( \rho_1 - \rho_2 \right)^2 = 0 \Leftrightarrow \rho_1 = \rho_2.
  $$
\end{lemma}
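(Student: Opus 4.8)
The plan is to reduce the statement to a concrete computation in a well-chosen basis of $\Lambda^2 V \cong \Lambda^+V \oplus \Lambda^-V$. Write $\rho_i = \lambda_i\theta + \beta_i$ with $\beta_i \in \Lambda^-V$ anti-self-dual, so that $\rho_i\wedge\rho_i = (\lambda_i^2 - \abs{\beta_i}^2)\dvol = \dvol$, which gives the constraint $\abs{\beta_i}^2 = \lambda_i^2 - 1$ for $i = 1,2$. Then
\[
  (\rho_1 - \rho_2)^2 = \bigl((\lambda_1 - \lambda_2)\theta + (\beta_1 - \beta_2)\bigr)^2
  = \bigl((\lambda_1-\lambda_2)^2 - \abs{\beta_1 - \beta_2}^2\bigr)\dvol,
\]
using that $\theta\wedge\beta = 0$ for any self-dual $\theta$ and anti-self-dual $\beta$ (self-dual and anti-self-dual forms are wedge-orthogonal), that $\theta^2 = \dvol$, and that $\beta\wedge\beta = -\abs{\beta}^2\dvol$ on anti-self-dual forms. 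So the whole lemma is equivalent to the inequality
\[
  (\lambda_1 - \lambda_2)^2 \le \abs{\beta_1 - \beta_2}^2,
\]
with equality iff $\rho_1 = \rho_2$.

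Next I would expand the right-hand side: $\abs{\beta_1 - \beta_2}^2 = \abs{\beta_1}^2 + \abs{\beta_2}^2 - 2\inner{\beta_1}{\beta_2} = (\lambda_1^2 - 1) + (\lambda_2^2 - 1) - 2\inner{\beta_1}{\beta_2}$, while $(\lambda_1-\lambda_2)^2 = \lambda_1^2 + \lambda_2^2 - 2\lambda_1\lambda_2$. Subtracting, the desired inequality becomes
\[
  \abs{\beta_1 - \beta_2}^2 - (\lambda_1-\lambda_2)^2 = 2\lambda_1\lambda_2 - 2 - 2\inner{\beta_1}{\beta_2} \ge 0,
\]
i.e. $\inner{\beta_1}{\beta_2} \le \lambda_1\lambda_2 - 1$. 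By Cauchy--Schwarz, $\inner{\beta_1}{\beta_2} \le \abs{\beta_1}\abs{\beta_2} = \sqrt{(\lambda_1^2-1)(\lambda_2^2-1)}$, so it suffices to check the elementary scalar inequality
\[
  \sqrt{(\lambda_1^2-1)(\lambda_2^2-1)} \le \lambda_1\lambda_2 - 1 \qquad \text{for } \lambda_1,\lambda_2 \ge 1.
\]
Since both sides are nonnegative (the right side because $\lambda_1\lambda_2 \ge 1$), squaring reduces this to $(\lambda_1^2-1)(\lambda_2^2-1) \le (\lambda_1\lambda_2-1)^2$, which expands to $-\lambda_1^2 - \lambda_2^2 \le -2\lambda_1\lambda_2$, i.e. $(\lambda_1-\lambda_2)^2 \ge 0$. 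This proves $(\rho_1-\rho_2)^2 \le 0$.

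For the equality case I would trace back through the two inequalities used. Equality in the final scalar step forces $\lambda_1 = \lambda_2 =: \lambda$. Given that, equality overall requires equality in Cauchy--Schwarz, so $\beta_1$ and $\beta_2$ are parallel; combined with $\abs{\beta_1} = \abs{\beta_2} = \sqrt{\lambda^2-1}$ and the sign condition coming from $\inner{\beta_1}{\beta_2} = \lambda^2 - 1 \ge 0$, this gives $\beta_1 = \beta_2$ (the degenerate case $\lambda = 1$ forces $\beta_1 = \beta_2 = 0$ directly). Hence $\rho_1 = \rho_2$, and the converse implication is trivial. I do not anticipate a genuine obstacle here; the only mild subtlety is bookkeeping the nonnegativity needed to square inequalities and handling the boundary case $\lambda_i = 1$, where some of the quantities vanish. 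Everything else is linear algebra on the orthogonal splitting $\Lambda^2 V = \Lambda^+ V \oplus \Lambda^- V$ together with the quadratic form $\beta \mapsto \beta\wedge\beta$.
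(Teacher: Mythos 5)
Your proof is correct and follows essentially the same route as the paper: the splitting into self-dual and anti-self-dual parts, the relation $\abs{\rho_i^-}^2=\lambda_i^2-1$, Cauchy--Schwarz, and the scalar inequality $\sqrt{(\lambda_1^2-1)(\lambda_2^2-1)}\le\lambda_1\lambda_2-1$. Your direct treatment of the equality case (using the sign of $\inner{\beta_1}{\beta_2}$ to rule out $\beta_1=-\beta_2$) is in fact a bit cleaner than the paper's contradiction argument, but it is not a different method.
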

\begin{proof}
  Since $\rho_1^+ = \lambda_1 \theta$ and $\rho_2^+ = \lambda_2\theta$ for
  $\lambda_1, \lambda_2 \geq 1$ we find
  \begin{align*}
    \left( \rho_1 - \rho_2 \right)^2 &=\left( \rho_1^+ - \rho_2^+ + \rho_1^- -
    \rho_2^- \right)^2\\
    &=
    \left( \left( \lambda_1 - \lambda_2 \right)\theta + \rho_1^- -
    \rho_2^-\right)^2\\
    &=
    \left( \lambda_1 - \lambda_2 \right)^2\dvol + \left( \rho_1^- - \rho_2^-
    \right)^2\\
    &=
    \left( \lambda_1 - \lambda_2 \right)^2\dvol - \Abs{\rho_1^-}^2\dvol -
    \Abs{\rho_2^-}^2\dvol -2 \rho_1^-\wedge \rho_2^-.
  \end{align*}
  Thus,
  $$
  \frac{\left( \rho_1 - \rho_2 \right)^2}{\dvol} = \left( \lambda_1 -
  \lambda_2 \right)^2 - \Abs{\rho_1^-}^2 - \Abs{\rho_2^-}^2 +
  2\left<\rho_1^-,\rho_2^-\right>.
  $$
  Since
  $$
  \dvol = \rho_i\wedge \rho_i = \lambda_i^2 \theta\wedge \theta -
  \rho_i^- \wedge \rho_i^- = \left(\lambda_i^2 - \Abs{\rho_i^-}^2\right) \dvol
  $$
  we have $\Abs{\rho_i^-}^2 = \lambda_i^2 -1$ and hence
  \begin{align*}
    \frac{\left( \rho_1 - \rho_2 \right)^2}{\dvol}
    &=
    \lambda_1^2 + \lambda_2^2 -
    2\lambda_1\lambda_2 - \lambda_1^2 +1 - \lambda^2_2 + 1 +
    2\left<\rho_1^-,\rho_2^-\right>\\
    &=
    -2\lambda_1\lambda_2 + 2 + 2\left<\rho_1^-,\rho_2^-\right>.
  \end{align*}
  By the Cauchy-Schwarz inequality $\left<\rho_1^-,\rho_2^-\right> \leq
  \Abs{\rho_1^-}\Abs{\rho_2^-}$ and equality holds if and only if $\rho_1^-$ and
  $\rho_2^-$ are colinear. Thus,
  \begin{equation}\label{eq:cs}
    \begin{split}
      \frac{\left( \rho_1 - \rho_2 \right)^2}{\dvol}
      &\leq
      -2\lambda_1\lambda_2 + 2 + 2\Abs{\rho_1^-}\Abs{\rho_2^-}\\
      &=
      -2\lambda_1\lambda_2 + 2 + 2\sqrt{\lambda_1^2 -1} \sqrt{\lambda_2^2 -1}
    \end{split}
  \end{equation}
  and equality holds only if $\rho_1^-$ and $\rho_2^-$ are colinear. Suppose
  $\left(\rho_1 - \rho_2\right)^2 \geq 0$. Then
  $$
  \lambda_1\lambda_2 - 1 \leq \sqrt{\lambda_1^2 -1} \sqrt{\lambda_2^2 -1}.
  $$
  Squaring both sides of the inequality yields
  $$
  \lambda_1^2\lambda_2^2 - 2\lambda_1\lambda_2 + 1 \leq (\lambda_1^2 - 1)
  (\lambda_2^2 -1 ) = \lambda_1^2\lambda_2^2 -\lambda_1^2 - \lambda_2^2 + 1.
  $$
  Hence we find
  $$
  \lambda_1^2 + \lambda_2^2 - 2\lambda_1\lambda_2 \leq 0
  $$
  with equality if and only if $\rho_1^-$ and $\rho_2^-$ are colinear. But
  clearly
  $$
  \lambda_1^2 + \lambda_2^2 - 2\lambda_1\lambda_2 = \left( \lambda_1 - \lambda_2
  \right)^2 \geq 0
  $$
  and it follows that $\lambda_1 = \lambda_2$ and $\rho_1^-$ and $\rho_2^-$ are
  colinear! From $\rho_i^+ = \lambda_i \theta$ we get $\rho_1^+= \rho_2^+$.
  Further, since $\lambda_1 = \lambda_2$ and $\Abs{\rho_i^-}^2 =
  \lambda_i^2 - 1$ we get $\Abs{\rho_1^-} =
  \Abs{\rho_2^-}$ and therefore $\rho_1^- = \pm\rho_2^-$. However, if $\rho_2 =
  \rho_1^+ - \rho_1^-$ and $\Abs{\rho_1^-} > 0$, then $\left( \rho_1 - \rho_2
  \right)^2 \le 0$ in violation to our assumption. We conclude that $\rho_1 =
  \rho_2$.
\end{proof}

\begin{remark}
  The set
  \begin{equation}\label{def:constraint}
    \cP_{\dvol, \theta} := \left\{\left.\rho \in \Lambda^2V \right| \rho\wedge\rho = \dvol, \rho^+ =
      \lambda \theta\text{ for } \Abs{\lambda}\ge 1\right\}.
    \end{equation}
    for a fixed volume form $\dvol$ and a self-dual two-form $\theta$ was
    consider by Donaldson in $\cite{DON2}$ in the context of the following
    problem: How many symplectic structures $\rho$ exist, such that $\rho\wedge
    \rho = \dvol$ for a prescribed volume form $\dvol$, $\rho$ is compatible
    with a prescribed almost complex
    structure and $\rho$ lays in a given positive affine subspace of $H^2(M,
    \R)$? The answer is that it is unique.
    The set $\cP_{\dvol, \theta}$ is a three-dimensional submanifold of
    $\Lambda^2V$ with two components. The key property of this manifold
    established in Lemma~\ref{lem:negativeChords} is called `negative chords'.
  \end{remark}

  \begin{lemma}[{\bf The Linearization of $K$}]\label{lem:keyRegularity}
    Let $\rho_s$ be a path of nondegenerate 2-forms and $\rhohat =
    \left.\frac{d}{ds}\right|_{s=0} \rho_s$, $\rho = \rho_0$. Then
    \begin{equation}\label{eq:Khat}
      \left.\frac{d}{ds}\right|_{s=0}\left(\frac{\rho_s^+}{u_s}\right) =
      \frac{1}{u} R^\rho \rhohat^{+^\rho}, \qquad u_s = \frac{\rho_s\wedge
      \rho_s}{2\dvol}, \qquad R^{\rho} w = w - \frac{w \wedge \rho}
      {\dvol_{\rho}} \rho.
    \end{equation}
    In particular,
    $$
    \rhohat^{+^\rho} = u R^\rho
    \left.\frac{d}{ds}\right|_{s=0}\left(\frac{\rho_s^+}{u_s}\right).
    $$
  \end{lemma}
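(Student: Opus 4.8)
The plan is to compute $\left.\frac{d}{ds}\right|_{s=0}\left(\frac{\rho_s^+}{u_s}\right)$ directly by the product rule, handling the two factors $\rho_s^+$ and $u_s^{-1}$ separately, and then to reorganize the answer into the stated form using the algebraic properties of $R^\rho$ recalled before the lemma. First I would differentiate $u_s = \frac{\rho_s\wedge\rho_s}{2\dvol}$ to get $\dot u := \left.\frac{d}{ds}\right|_{s=0} u_s = \frac{\rho\wedge\rhohat}{\dvol}$. Next I would differentiate the self-dual projection: since $\rho_s^+ = \frac12(\rho_s + *\rho_s)$ and $*$ is the fixed Hodge operator of $g$ (independent of $s$), we simply have $\left.\frac{d}{ds}\right|_{s=0}\rho_s^+ = \rhohat^+$. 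Combining these with the quotient rule gives
\begin{equation*}
  \left.\frac{d}{ds}\right|_{s=0}\left(\frac{\rho_s^+}{u_s}\right)
  = \frac{\rhohat^+}{u} - \frac{\rho^+\,\dot u}{u^2}
  = \frac1u\left(\rhohat^+ - \frac{\rho\wedge\rhohat}{\dvol}\,\frac{\rho^+}{u}\right).
\end{equation*}

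The remaining task is purely pointwise linear algebra: show that $\rhohat^+ - \frac{\rho\wedge\rhohat}{\dvol}\cdot\frac{\rho^+}{u} = R^\rho\rhohat^{+^\rho}$. Here I would use the facts recorded in the excerpt: $R^\rho$ is an involution preserving the wedge product, acting as the identity on the $*$-orthogonal complement of $\rho$ and sending $\rho \mapsto -\rho$; it maps $\Om^+$ to $\Om^{+^\rho}$; and $*^\rho = R^\rho * R^\rho$. The self-dual part with respect to $g^\rho$ is $\rhohat^{+^\rho} = \frac12(\rhohat + *^\rho\rhohat) = \frac12(\rhohat + R^\rho * R^\rho\rhohat)$, so $R^\rho\rhohat^{+^\rho} = \frac12(R^\rho\rhohat + *R^\rho\rhohat) = (R^\rho\rhohat)^+$. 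Thus the identity to prove becomes $(R^\rho\rhohat)^+ = \rhohat^+ - \frac{\rho\wedge\rhohat}{\dvol}\cdot\frac{\rho^+}{u}$. Writing out $R^\rho\rhohat = \rhohat - \frac{\rhohat\wedge\rho}{\dvol_\rho}\rho$ and taking the $g$-self-dual part (using that $\rho^+$ is the $g$-self-dual part of $\rho$, that the scalar $\frac{\rhohat\wedge\rho}{\dvol_\rho}$ passes through the projection, and that $\dvol_\rho = u\,\dvol$), this reduces to verifying $\frac{\rhohat\wedge\rho}{\dvol_\rho} = \frac{\rho\wedge\rhohat}{u\,\dvol}$, which is immediate from $\rhohat\wedge\rho = \rho\wedge\rhohat$ and $\dvol_\rho = u\,\dvol$. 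The final ``In particular'' statement then follows by applying $R^\rho$ to both sides of \eqref{eq:Khat} and using $(R^\rho)^2 = \id$, so that $R^\rho$ of the left side is $\frac1u\rhohat^{+^\rho}$.

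I expect no serious obstacle here; the only point requiring care is bookkeeping the two distinct Hodge operators $*$ and $*^\rho$ (equivalently, the two notions of self-duality and the two volume normalizations $\dvol$ versus $\dvol_\rho = u\,\dvol$) and making sure the conjugation identity $*^\rho = R^\rho * R^\rho$ is invoked correctly so that $R^\rho$ intertwines the two self-dual projections. Once that is set up, the whole computation is a short and routine verification.
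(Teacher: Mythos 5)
Your proposal is correct and follows essentially the same route as the paper: differentiate the quotient to get $\frac{\rhohat^+}{u}-\frac{\rhohat\wedge\rho}{\dvol_\rho}\frac{\rho^+}{u}=\frac1u(R^\rho\rhohat)^+$ and then identify $(R^\rho\rhohat)^+=R^\rho\rhohat^{+^\rho}$. The only cosmetic difference is that you rederive this last intertwining fact from $*^\rho=R^\rho * R^\rho$ and $(R^\rho)^2=\id$, whereas the paper simply cites it from \cite{KROMSAL}; both are legitimate.
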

  \begin{proof}
    We compute
    \begin{equation*}
      \left.\frac{d}{ds}\right|_{s=0}\left(\frac{\rho_s^+}{u_s}\right) =
      \frac{\rhohat^+}{u} - \frac{\rhohat \wedge \rho}{\dvol_{\rho}}
      \frac{\rho^+}{u} = \frac{1}{u}\left(R^{\rho} \rhohat\right)^+ =
      \frac{1}{u}R^{\rho} \rhohat^{+_{\rho}}.
    \end{equation*}
    For the last equality we used that the linear
    map $R^{\rho}:\Lambda^2V \to \Lambda^2V$ is an involution on $\Lambda^2V$
    for a 4-dimensional real vector space $V$ and it maps $\Lambda^+ $ to
    $\Lambda^{+^{\rho}}$ and vice versa, where $\Lambda^{+^\rho}$ is the space
    of self-dual 2-forms for the metric $g^\rho$ (see \cite{KROMSAL} for a proof
    of these facts).
  \end{proof}

  \begin{lemma}[{\bf Lie Derivative}]\label{cor:keyRegularity}
    Let $X$ be a vector field on $M$ and $\rho$ a nondegenerate two-form. Then
    $$
    \left(\cL_X \rho\right)^{+^\rho} = u R^\rho \left(\cL_X \frac{\rho^+}{u}
    -\frac{\cL_X \dvol}{\dvol}\frac{\rho^+}{u} - \frac12\left(\cL_X
    *\right)\frac{\rho}{u}\right).
    $$
  \end{lemma}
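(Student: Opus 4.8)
The plan is to deduce the identity from Lemma~\ref{lem:keyRegularity} by applying it to the path obtained by flowing $\rho$ along $X$, and then to rewrite the resulting $s$-derivative of $\rho_s^+/u_s$ in terms of Lie derivatives. The one point requiring care is to keep the (frozen) Hodge star of $g$ that is differentiated inside Lemma~\ref{lem:keyRegularity} separate from the metric-dependent operator $*$ whose Lie derivative appears on the right-hand side.

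First I would let $\phi_s$ denote the flow of $X$, which is globally defined since $M$ is closed, and put $\rho_s := \phi_s^*\rho$, so that $\left.\tfrac{d}{ds}\right|_{s=0}\rho_s = \cL_X\rho =: \rhohat$. Each $\rho_s$ is nondegenerate because $\phi_s$ is a diffeomorphism, so the ``in particular'' clause of Lemma~\ref{lem:keyRegularity}, applied with this $\rho_s$ and $u_s = \frac{\rho_s\wedge\rho_s}{2\dvol}$, yields
$$
\left(\cL_X\rho\right)^{+^\rho} = u\,R^\rho\left.\frac{d}{ds}\right|_{s=0}\left(\frac{\rho_s^+}{u_s}\right),
$$
where $\rho_s^+ = \tfrac12(\rho_s + *\rho_s)$ and $\dvol$ are those of the fixed metric $g$. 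It then suffices to show that the $s$-derivative on the right equals $\cL_X\frac{\rho^+}{u} - \frac{\cL_X\dvol}{\dvol}\,\frac{\rho^+}{u} - \tfrac12(\cL_X *)\frac{\rho}{u}$; the lemma follows.

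To evaluate that derivative I would differentiate numerator and denominator separately. Because $*$ is frozen, $\left.\tfrac{d}{ds}\right|_{s=0}\rho_s^+ = \tfrac12(\cL_X\rho + *\cL_X\rho) = (\cL_X\rho)^+$, while the Leibniz rule for the Lie derivative of the tensor $*$ gives $\cL_X(\rho^+) = (\cL_X\rho)^+ + \tfrac12(\cL_X *)\rho$, so $\left.\tfrac{d}{ds}\right|_{s=0}\rho_s^+ = \cL_X(\rho^+) - \tfrac12(\cL_X *)\rho$. For the denominator, from $u_s\,\dvol = \tfrac12\phi_s^*(\rho\wedge\rho)$ and $\rho\wedge\rho = 2u\,\dvol$ one gets $\left.\tfrac{d}{ds}\right|_{s=0}u_s = \cL_X u + u\,\frac{\cL_X\dvol}{\dvol}$. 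Substituting both into the quotient rule, collecting $\frac{\cL_X(\rho^+)}{u} - \frac{\rho^+\,\cL_X u}{u^2}$ into $\cL_X\bigl(\frac{\rho^+}{u}\bigr)$, and using the $C^\infty(M)$-linearity of $(\cL_X *)$ to write $\tfrac1u(\cL_X *)\rho = (\cL_X *)\frac{\rho}{u}$, gives precisely the asserted expression inside $R^\rho$.

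The argument is essentially Cartan-calculus bookkeeping, so I do not expect a serious obstacle; the only delicate point — and the origin of the term $-\tfrac12(\cL_X *)\frac{\rho}{u}$ with exactly this sign and coefficient — is the distinction already flagged above: in Lemma~\ref{lem:keyRegularity} the self-dual projection $\rho \mapsto \rho^+$ is held fixed while $\rho_s$ varies, whereas in the statement being proved the Lie derivative acts on the full composite $\rho^+/u$, including its $g$-dependence through $*$. Getting that interchange right, via $\cL_X(\rho^+) = (\cL_X\rho)^+ + \tfrac12(\cL_X *)\rho$, is the crux of the computation.
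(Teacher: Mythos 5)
Your proposal is correct and follows essentially the same route as the paper: both apply Lemma~\ref{lem:keyRegularity} to the path $\rho_s=\psi_s^*\rho$ generated by the flow of $X$ and then account for the frozen $*$ and $\dvol$ via the correction terms $-\frac{\cL_X\dvol}{\dvol}\frac{\rho^+}{u}$ and $-\tfrac12(\cL_X*)\frac{\rho}{u}$; the paper organizes this bookkeeping by splitting the pullback $\psi_s^*\frac{\rho^+}{u}$ before differentiating, while you differentiate first and use the Leibniz rule $\cL_X(*\rho)=(\cL_X*)\rho+*\cL_X\rho$, which is the same computation. No gaps.
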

  \begin{proof}
    Let $\psi_s$, $s\geq 0$ be the family of diffeomorphisms on $M$ generated by
    the vector field $X$. Then
    \begin{equation*}
      \begin{split}
        2 \psi_s^* \frac{\rho^+}{u}
        &=
        \psi_s^* \left(\left(\rho + *\rho\right)
        \frac{2\dvol}{\rho\wedge\rho}\right)\\
        &=
        \left( \psi_s^* \rho + \left(\psi_s^* * \right) \left(\psi_s^*
        \rho\right)\right) \frac{2 \psi_s^* \dvol}{\psi_s^* (\rho\wedge\rho)}\\
        &=
        \left(\psi_s^* \rho+ *\left(\psi_s^* \rho\right)
        \right)\frac{2\dvol}{\psi_s^* (\rho\wedge\rho)} \frac{2 \psi_s^*
        \dvol}{2\dvol}\\
        &\qquad
        +  \left( \left(\psi_s^* *\right) \left(\psi_s^* \rho\right)
        -*\left(\psi_s^* \rho\right) \right)\frac{2 \psi_s^* \dvol}{\psi_s^*
        (\rho\wedge\rho)}
      \end{split}
    \end{equation*}
    Thus
    \begin{equation*}
      \begin{split}
        \left.\frac{d}{ds}\right|_{s=0}\left(2 \psi_s^* \frac{\rho^+}{u}\right)
        &=
        \left.\frac{d}{ds}\right|_{s=0} \left(2\left(\psi_s^*\rho\right)^+
        \frac{2\dvol}{\psi_s^*\left(\rho\wedge\rho\right)}\right) +
        \frac{2\rho^+}{u} \left.\frac{d}{ds}\right|_{s=0}\frac{\psi_s^*
        \dvol}{\dvol}\\
        &\qquad
        +\left. \frac{d}{ds}\right|_{s=0}\left(\psi_s^* *\right)\frac{\rho}{u}.
      \end{split}
    \end{equation*}
    Using Lemma~\ref{lem:keyRegularity} we then compute
    \begin{equation*}
      \begin{split}
        2\left(\cL_X \rho\right)^{+^\rho}
        &=
        2\left(\left.\frac{d}{ds}\right|_{s=0} ({\psi_s}^*
        \rho)\right)^{+^\rho}\\
        &=
        u R^\rho \left.  \frac{d}{ds}\right|_{s=0}
        \left(2(\psi_s^*\rho)^+\frac{2\dvol}{\psi_s^*(\rho\wedge\rho)}\right)\\
        &=
        u R^\rho\left(2\cL_X\frac{\rho^+}{u} - \frac{2\rho^+}{u}
        \frac{\cL_X\dvol}{\dvol}  -
        \left(\cL_X*\right)\frac{\rho}{u}\right)
      \end{split}
    \end{equation*}
    This proves the lemma.
  \end{proof}

  \begin{proof}[Proof of Theorem~\ref{thm:K}]\label{proof:K}
    We prove (i). By Lemma~\ref{lem:keyRegularity} the linearization of $K$ is
    given by
    $$
    \Khat: T_{\rho}\sS^{k,p}_{a+ \cH_{\rho}} \to W^{k,p}(M, \Lambda^{+}) :\qquad
    \rhohat \mapsto \frac{1}{u}R^\rho \rhohat^{+^\rho}
    $$
    We claim that this is an isomorphism. Then (i) follows from the inverse
    function theorem for Banach spaces. By Hodge theory for the operator
    $d^{*^{\rho}}d + dd^{*^{\rho}}$ every $\rho \in \sS^{k,p}_{a +
    \cH^{\rho}}$ can be written as a sum
    $
    \rho = \rho + d\lambda + h
    $
    for a unique $ \lambda \in W^{k+1,p}(M, T^*M) $
    and a $h \in \cH^{\rho}$ such that $\int_M d\lambda \wedge *^{\rho} h =0$ and
    $d^{*^{\rho}}\lambda =0$. Hence
    $$
    \rhohat = d\lambda + h
    $$
    for such unique $\lambda$ and $h$.  Now suppose $\Khat \rhohat = 0$. It
    follows that
    $$
    d^{(+^{\rho})}\lambda + h = \frac12 (d\lambda + *^{\rho}d\lambda) + h = 0.
    $$
    Since $h$ is closed, $dd^{(+^{\rho})}\lambda = 0$, and thus
    $$
    0 = \int_M dd^{+^{\rho}}\lambda \wedge \lambda = - \int_M
    d^{(+^{\rho})}\lambda \wedge d\lambda = - \int_M
    \Abs{d^{(+^{\rho})}\lambda}\dvol.
    $$
    Since $0 = \int_M d\lambda \wedge d\lambda = \int_M
    \Abs{d^{(+^{\rho})}\lambda} - \Abs{d^{(-^{\rho})}\lambda}\dvol$ it
    follows that
    $$
    d\lambda = 0.
    $$
    Together with $d^{*^{\rho}}\lambda = 0$ this implies that $\lambda $ is
    harmonic and $h=0$. This shows that $\Khat$ is injective. Now let $\eta \in
    W^{k,p}(M,\Lambda^+)$. Since $W^{k,p}$ is closed under products and
    composition with smooth functions for $k - \frac{4}{p} > 0$, $u_0R^{\rho}\eta$ is
    in $ W^{k,p}(M,\Lambda^{+^{\rho}})$ and by Hodge theory there exists a
    unique one-form $\lambda \in W^{k+1,p}(M, T^*M)$ and a harmonic two-form $h$
    which is self-dual with respect to $*^{\rho}$ such that $u_0 R^{\rho}\eta =
    d\lambda + h$. Then $\Khat \left( d\lambda + h \right) = \Khat \left( u_0
    R^{\rho} \eta \right) = \eta$ and this shows that $\Khat$ is surjective. This
    proves (i).

    We prove (ii). Let $\cH\subset H^2(M; \R)$ be a positive linear subspace. Let
    $\rho_1,\rho_2$ be two elements of $\sS_{a + \cH}$ such that
    $$
    \eta:=K(\rho_1) = K(\rho_2).
    $$
    Define
    $$
    \theta := \frac{\eta}{\Abs{\eta}^2} \in \Om^+.
    $$
    Then
    \begin{gather*}
      \theta\wedge \theta = \frac{1}{\Abs{\eta}^2}\dvol, \qquad \rho_i^+ =
      \lambda_i \theta, \qquad \lambda_i := {\Abs{\eta}^2u_{i}},
      \qquad u_i = \frac{\rho_i \wedge \rho_i}{2\dvol}
    \end{gather*}
    and
    \begin{equation*}
      \begin{split}
        (\rho_i - \theta)^2 &= \rho_i\wedge\rho_i - 2\rho_i^+ \wedge\theta + \theta^2\\
        &=
        2 u_i \dvol - 2 \lambda_i\theta^2 +\theta^2\\
        &=  2 u_i \dvol - 2 \Abs{\eta}^2 u_i \frac{1}{\Abs{\eta}^2}\dvol +\theta^2\\
        &=\theta^2
      \end{split}
    \end{equation*}
    for $i=1,2$. It now follows
    from Lemma~\ref{lem:negativeChords} that
    $$
    \left( \rho_1 - \rho_2 \right)^2  = \left((\rho_1 - \theta) - (\rho_2 -
    \theta) \right)^2\leq 0
    $$
    point wise.  On the other hand, since $[\rho_1], [\rho_2] \in a + \cH$ and
    $\cH$ is a positive subspace of $H^2(M)$, we have
    $$
    \int_M (\rho_1 - \rho_2)^2 \geq 0
    $$
    and therefore $(\rho_1 - \rho_2)^2 =0$. We conclude with
    Lemma~\ref{lem:negativeChords} that $\rho_1 = \rho_2$. This proves (ii).

    We prove (iii).  Let $\rho \in  \sS^{k,p}$ and let $\cL \rho$ be a Lie
    derivative of $\rho$ in an arbitrary direction. By Cartan's formula the Lie
    derivative of $\rho$ is exact and by Lemma~\ref{cor:keyRegularity}
    \begin{equation*}
      \begin{split}
        (d + d^{*^\rho}) \cL \rho
        &=
        d^{*^\rho} \cL \rho\\
        &=
        -*^\rho d *^\rho \cL \rho\\
        &=
        -*^\rho d\left( (\cL \rho)^{+^\rho} - (\cL \rho)^{-^\rho} \right)\\
        &=
        -2 *^\rho d (\cL \rho)^{+^\rho}\\
        &=
        - *^\rho d \left(u R^\rho \left(\cL \frac{\rho^+}{u} - \frac{\cL
        \dvol}{\dvol}\frac{\rho^+}{u}- \frac12\frac{(\cL *)
        \rho}{u}\right)\right).
      \end{split}
    \end{equation*}
    The right hand side is a term of the form
    $$
    P_1(\frac{1}{u}, \rho) \p^2 \frac{\rho^+}{u} + P_2(\frac{1}{u}, \rho) \p
    \rho \p \frac{\rho^+}{u} + P_3(\frac{1}{u}, \rho) \p \rho
    $$
    for polynoms $P_1, P_2, P_3$ with smooth coefficient functions in the
    indicated variables. It follows from elliptic regularity theory and the
    product estimates for Sobolev spaces of Lemma~\ref{lem:sobolevProd} that
    $\cL \rho \in W^{k,p}$ and that there exists polynomials $\fp_1, \fp_2$
    independent of $\rho$ with the following significance
    \begin{multline*}
      \norm{\cL \rho}_{W^{k,p}}
      \leq
      \fp_1\left(C, \norm{\rho}_{L^\infty}  \right)
      \norm{\frac{\rho^+}{u}}_{W^{k+1,p}}\\
      + \fp_2\left(C,
      \norm{\rho}_{L^\infty}, \norm{\rho}_{W^{k-1,p}} ,
      \norm{\frac{\rho^+}{u}}_{W^{k,p}}\right)\norm{\rho}_{W^{k,p}}
    \end{multline*}
    for $C = \sup_{x \in M} \frac{1}{u(x)}$.  Since the Lie derivative $\cL
    \rho$ was arbitrary, the result follows.
  \end{proof}

  \section{The Evolution Equation for \texorpdfstring{$K(\rho)$}{K(p)}}\label{sec:reducedEvoEq}

  In view of Theorem~\ref{thm:K} the Donaldson flow has an equivalent
  description on the space of self-dual two-forms, given by the evolution of
  $K(\rho) = \frac{\rho^+}{u}$. This evolution equation exposes the parabolic
  nature of the Donaldson flow and it is the key for the regularity theorems we
  will prove in the later sections.

  To obtain a global formula we introduce the operator $S^\rho$,
  \begin{equation}\label{def:Srho}
    S^\rho: \Om^1 \to \Om^+, \qquad S^\rho \lambda := - R^\rho d^{+^\rho}
    \lambda + u \nabla_{X_\lambda}\frac{\rho^+}{u},
  \end{equation}
  where $\lambda \in \Om^1$, $R^\rho$ is defined by~\eqref{def:Rrho} and
  $\rho(X_\lambda,\cdot) := \lambda$.  We say $\om_1,\om_2,\om_3 \in \Om^+$ form
  a standart local frame of $\Om^+$ if and only if locally
  \begin{equation*}
    \om_i \wedge \om_j = \left\{
      \begin{array}{cc}
        2 \dvol & i = j\\
        0 & i \neq j
      \end{array}
      \right..
    \end{equation*}

    \begin{theorem}[{\bf The Evolution of $\frac{\rho^+}{u}$}]\label{thm:evolutionK}
      \smallskip\noindent{\bf (i)}
      Suppose $\rho$ is a smooth solution to the Donaldson flow. Then the evolution
      of the 2-form $\frac{\rho^+}{u}$ is given by the equation
      \begin{equation}\label{eq:evoK}
        \p_t \frac{\rho^+}{u}
        =
        \frac{1}{u}R^\rho d^{+^\rho} *^\rho d \theta^\rho,\qquad u =
        \frac{\rho\wedge\rho}{2\dvol}, \qquad \theta^\rho = \frac{2\rho^+}{u} -
        \Abs{\frac{\rho^+}{u}}\rho.
      \end{equation}
      Here $R^\rho$ is defined by equation~\eqref{def:Rrho}.

      \smallskip\noindent{\bf (ii)}
      Equation \eqref{eq:evoK} is the same as
      $$
      \p_t \frac{\rho^+}{u} = -\frac{2}{u} S^\rho\left(S^\rho\right)^{*^\rho}
      \frac{\rho^+}{u} + \nabla_{X_{(*^\rho d \theta^\rho)}}
      \frac{\rho^+}{u},
      $$
      where $S^\rho$ is defined by \eqref{def:Srho},
      $\left(S^\rho\right)^{*^\rho}$ is the adjoint of $S^\rho$ with respect to
      the inner products $\int_M \inner{\cdot}{\cdot}_{g^\rho}\dvol$ on $\Om^1$ and
      $\int_M\inner{\cdot}{\cdot}_g \dvol$ on $\Om^+$ and
      $$
      \rho\left(X_{(*^\rho d \theta^\rho)},\cdot\right) := *^\rho d \theta^\rho.
      $$

      \smallskip\noindent{\bf (iii)}
      Let ${\om_1, \om_2,\om_3}$ form a local standart frame of $\Om^+$. Then the
      evolution of the functions
      $
      K_i := \frac{\rho \wedge \om_i}{\dvol_\rho}
      $
      is given by
      \begin{equation}\label{eq:localEvoEq}
        \begin{split}
          \p_t \sum_i K_i \om_i
          &=
          -\sum_{i,j,k\, \text{cyclic}} \left(\frac{1}{u}d^{*^\rho} d K_i -
          2\{K_j,K_k\}_\rho - \rho\left( X_{K_i}, \sum_\ell J_\ell X_{K_\ell}\right)
          \right)\om_i\\
          &\qquad
          - \frac{1}{u}E^\rho_\om K + {E'}^\rho_\om K.
        \end{split}
      \end{equation}
      Here $X_H$ denotes the Hamiltonian vector field of the function $H$. The
      bracket $\{\cdot,\cdot\}_\rho$ denotes the Poisson bracket with respect to
      the symplectic structure $\rho$. $E^\rho_\om$ and ${E'}^\rho_\om$ are error
      terms depending on the frame $\om_1, \om_2, \om_3$ that vanish whenever
      $\nabla \om_i = 0$ and are given by
      \begin{equation}\label{def:E}
        \begin{split}
          E^\rho_w f
          &:=
          \sum_{i,j} \inner{(S^\rho)^{*^\rho} \om_i}{df_j \circ J_j^\rho}_{\rho} \om_i +
          2S^{\rho}\sum_j f_j \left(S^\rho \om_j\right)^{*^\rho}\\
          &\qquad +
          \sum_{i,j,k\, \text{cyclic}} *^\rho \left(df_j \wedge d\om_k - df_k \wedge
          d \om_j\right)\om_i\\
          {E'}^\rho_\om f
          &:= \sum_{i,j} K_j df_i (X_{(S^\rho)^{*^\rho} \om_j})\om_i+
          \sum_i f_i \nabla_{X_{*^\rho d \theta^\rho}} \om_i.
        \end{split}
      \end{equation}
      $$
      $$
      for $f=(f_1, f_2,f_3) \in \Om^0(M, \R^3)$.

      \smallskip\noindent{\bf (iv)}
      Assume the hyperK\"ahler case. Then the evolution of the functions
      $
      K_i = \frac{\rho \wedge \om_i}{\dvol_\rho}
      $
      is given by
      \begin{multline*}
        \p_t \sum_i K_i \om_i\\
        =
        -\sum_{i,j,k\, \text{cyclic}} \left(\frac{1}{u}d^{*^\rho} d K_i -
        2\{K_j,K_k\}_\rho - \rho\left( X_{K_i}, \sum_\ell J_\ell X_{K_\ell}\right)
        \right)\om_i.
      \end{multline*}
    \end{theorem}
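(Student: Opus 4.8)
The plan is to derive~\eqref{eq:evoK} first, since parts (ii)--(iv) are algebraic rewritings of it, and the hyperK\"ahler case~(iv) is just~(iii) with the covariantly-constant frame $\om_1,\om_2,\om_3$ making $E^\rho_\om$ and ${E'}^\rho_\om$ vanish. For~(i), I would start from the Donaldson flow~\eqref{eq:DONFLOW}, $\p_t\rho = d*^\rho d\Theta^\rho$, and apply Lemma~\ref{lem:keyRegularity} with $\rhohat = \p_t\rho$. Since $\p_t\rho$ is exact (hence $\p_t\rho = d\lambda$ with $\lambda = *^\rho d\Theta^\rho$, using Cartan as in the proof of Theorem~\ref{thm:K}(iii)), Lemma~\ref{lem:keyRegularity} gives
$$
\p_t\frac{\rho^+}{u} = \frac{1}{u}R^\rho(\p_t\rho)^{+^\rho}
= \frac{1}{u}R^\rho\bigl(d*^\rho d\Theta^\rho\bigr)^{+^\rho}
= \frac{1}{u}R^\rho\, d^{+^\rho}\bigl(*^\rho d\Theta^\rho\bigr).
$$
The remaining point is to check $\Theta^\rho = \theta^\rho$, i.e.\ that $*\frac{\rho}{u} - \frac12\bigl|\frac{\rho}{u}\bigr|^2\rho = \frac{2\rho^+}{u} - \bigl|\frac{\rho^+}{u}\bigr|\rho$ modulo exact forms (or more precisely up to a term whose $d$ vanishes); this is a pointwise identity using $\rho = \rho^+ + \rho^-$, $*\rho = \rho^+ - \rho^-$, $2\rho^+ = \rho + *\rho$, and $|\rho|^2 = |\rho^+|^2 + |\rho^-|^2$ together with $\rho\wedge\rho = 2u\dvol$ which forces a relation between $|\rho^+|^2$, $|\rho^-|^2$ and $u$. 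That computation is routine; I expect the exponents on $|\frac{\rho^+}{u}|$ vs.\ $|\frac{\rho}{u}|^2$ to be reconciled exactly by the volume normalization.

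For~(ii), the idea is to split $d^{+^\rho}(*^\rho d\theta^\rho)$ into a ``divergence'' part and a transport part. Observe that $*^\rho d\theta^\rho$ is a one-form, say $\mu$, and that $d^{+^\rho}\mu$ relates to $S^\rho\mu$ via~\eqref{def:Srho}: $S^\rho\mu = -R^\rho d^{+^\rho}\mu + u\nabla_{X_\mu}\frac{\rho^+}{u}$, so $R^\rho d^{+^\rho}\mu = -S^\rho\mu + u\nabla_{X_\mu}\frac{\rho^+}{u}$. Substituting, $\p_t\frac{\rho^+}{u} = -\frac{1}{u}S^\rho\mu + \nabla_{X_\mu}\frac{\rho^+}{u}$ with $\mu = *^\rho d\theta^\rho$, and $X_\mu = X_{(*^\rho d\theta^\rho)}$ as defined. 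It then remains to identify $S^\rho(*^\rho d\theta^\rho)$ with $2(S^\rho)(S^\rho)^{*^\rho}\frac{\rho^+}{u}$; this amounts to showing $*^\rho d\theta^\rho = 2(S^\rho)^{*^\rho}\frac{\rho^+}{u}$, which I would verify by computing the adjoint of $S^\rho$ against the two inner products in the statement and integrating by parts, recognizing $\theta^\rho$ as (twice) the gradient of the energy density written in terms of $\frac{\rho^+}{u}$. The hard part here is keeping the two different metrics ($g$ on $\Om^+$, $g^\rho$ on $\Om^1$) straight and confirming the adjoint computation produces exactly $\theta^\rho$ and not some other self-dual form.

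For~(iii), I would expand everything in the local standard frame: write $\frac{\rho^+}{u} = \sum_i K_i\om_i$ (so $K_i = \frac{\rho\wedge\om_i}{\dvol_\rho}$ since $\om_i\wedge\om_j = 2\delta_{ij}\dvol$ and $\rho\wedge\om_j = \rho^+\wedge\om_j = u\cdot 2K_j\dvol/2 = \dvol_\rho K_j$), and push the operator identity from~(ii) through this decomposition. The Laplacian-type term $\frac{1}{u}d^{*^\rho}dK_i$ comes from the principal part of $S^\rho(S^\rho)^{*^\rho}$ acting component-wise; the Poisson-bracket terms $\{K_j,K_k\}_\rho$ and the term $\rho(X_{K_i},\sum_\ell J_\ell X_{K_\ell})$ arise from the transport term $\nabla_X\frac{\rho^+}{u}$ and from the ``$u\nabla_{X_\lambda}$'' piece inside $S^\rho$, once one uses the quaternionic relations among the $J^\rho_\ell$ and the definition $\rho(J^\rho\cdot,\cdot) = \rho(\cdot,J\cdot)$; all the frame-dependent leftovers (which involve $\nabla\om_i$) get collected into $E^\rho_\om$ and ${E'}^\rho_\om$, whose stated formulas~\eqref{def:E} I would verify by bookkeeping. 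The main obstacle, and the genuinely delicate part of the whole theorem, is this bookkeeping in~(iii): correctly tracking which terms are genuinely geometric (the Poisson brackets and the $J_\ell$-twisted term) versus which are frame artifacts, and checking that the latter are exactly the displayed $E^\rho_\om, {E'}^\rho_\om$ and do vanish when $\nabla\om_i = 0$. Part~(iv) is then immediate: in the hyperK\"ahler case one can choose the hyperK\"ahler triple $\om_1,\om_2,\om_3$, which is parallel, so $E^\rho_\om K = {E'}^\rho_\om K = 0$ and~\eqref{eq:localEvoEq} collapses to the stated formula.
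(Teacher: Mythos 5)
Your proposal is correct and follows essentially the same route as the paper: (i) is Lemma~\ref{lem:keyRegularity} applied to $\p_t\rho = d*^\rho d\Theta^\rho$ (and $\Theta^\rho=\theta^\rho$ is in fact an exact pointwise identity, since $\Abs{\rho}^2 = 2\Abs{\rho^+}^2 - 2u$), (ii) rests on the identity $(S^\rho)^{*^\rho}\frac{2\rho^+}{u} = *^\rho d\theta^\rho$, which is the paper's Lemma~\ref{lem:Srho}(ii), and (iii)--(iv) are the same frame expansion with the $\nabla\om_i$-contributions collected into $E^\rho_\om,\ {E'}^\rho_\om$. The only slip is one of attribution: in the paper the Poisson brackets $\{K_j,K_k\}_\rho$ come out of $S^\rho(S^\rho)^{*^\rho}$ (via $d(R^\rho\om_i) = d\om_i - dK^\rho_i\wedge\rho$ and the quaternionic relations, Lemma~\ref{lem:Srho}(vii)), while only the term $\rho\left(X_{K_i},\sum_\ell J_\ell X_{K_\ell}\right)$ comes from the transport term --- a bookkeeping detail that does not affect the outcome of your plan.
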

    \begin{proof}
      See page~\pageref{proof:evolutionK}.
    \end{proof}

    We need the following lemma on the properties of $S^\rho$ and its adjoint
    $(S^\rho)^{*^\rho}$.
    \begin{lemma}[{\bf The Operators $S^\rho$, $(S^\rho)^{*^\rho}$}]\label{lem:Srho}
      Let $\rho \in \sS_a$.

      \smallskip\noindent{\bf (i)}
      The adjoint of $S^{\rho}: \Om^1(M) \to \Om^+(M)$ with respect to the inner
      product $\int_M\inner{\cdot}{\cdot}_{g^\rho}\dvol$ on $\Om^1$ and
      $\int_M\inner{\cdot}{\cdot}_g\dvol$ on $\Om^+$ is given by
      \begin{equation}\label{def:SrhoStar}
        (S^\rho)^{*^\rho} \xi
        :=
        -d^{*^\rho}  \left(R^\rho \xi\right) + *^\rho
        \left(g\left(\nabla \frac{\rho^+}{u}, \xi \right) \wedge
        \rho \right),
      \end{equation}
      where $g\left( \nabla \frac{\rho^+}{u}, \xi \right)$ is the 1-form given by
      $X \mapsto g\left( \nabla_X \frac{\rho^+}{u}, \xi \right)$ for a vector field
      $X$.

      \smallskip\noindent{\bf (ii)}
      $(S^\rho)^{*^\rho} \frac{2\rho^+}{u} = *^\rho d\theta^\rho$, where $\theta^\rho =
      \frac{2\rho^+}{u} + \Abs{\frac{\rho^+}{u}}^2\rho$.

      \smallskip\noindent{\bf (iii)}
      Let $\om \in \Om^+$. Then
      $$
      (S^\rho)^{*^\rho} \om = *^\rho \left( d\om - g\left( \nabla
      \om,\frac{\rho^+}{u} \right)\wedge \rho \right)
      $$
      In particular $(S^\rho)^{*^\rho} \om$ is independent of derivatives of
      $\rho$ and if $\nabla \om = 0$, then $(S^\rho)^{*^\rho} \om = 0$.

      \smallskip\noindent{\bf (iv)}
      The operator $(S^\rho)^{*^\rho}$ satisfies the following Leipniz rule,
      $$
      (S^\rho)^{*^\rho} (f \xi) = *^\rho(df \wedge R^\rho \xi) + f
      (S^\rho)^{*^\rho} \xi
      $$
      for all $\xi \in \Om^+$ and $f \in \Om^0(M, \R)$.

      \smallskip\noindent{\bf (v)}
      Let ${\om_1, \om_2,\om_3}$ be a standard frame for $\Om^+$ with respect to
      the background metric $g$ and $f_i \in \Om^0(M, \R)$, then
      $$
      (S^\rho)^{*^\rho} \left(\sum_i f_i \om_i\right) = \sum_i \left(d f_i \circ
      J_i^\rho + f_i (S^\rho)^{*^\rho} \om_i\right).
      $$

      \smallskip\noindent{\bf (vi)}
      In the hyperK\"ahler case,
      $$
      (S^\rho)^{*^\rho} \left(\sum_i f_i \om_i\right) = \sum_i d f_i \circ
      J_i^\rho.
      $$

      \smallskip\noindent{\bf (vii)}
      Let ${\om_1, \om_2, \om_3}$ be a standard frame for $\Om^+$ with respect
      to the background metric $g$ and $f_i \in \Om^0(M, \R)$, then
      \begin{multline*}
        2S^{\rho}(S^{\rho})^{*^\rho} \left( \sum_j f_j \om_j \right)\\
        =
        \sum_{i,j,k\, \text{cyclic}} \left(d^{*^\rho} d f_i -
        u\left(\{f_j,K^\rho_k\}_\rho - \{K^\rho_j,f_k\}_\rho\right)\right)  \om_i
        +  E^\rho_w
        f,
      \end{multline*}
      where
      $K_i^\rho := \frac{\om_i \wedge \rho}{\dvol_\rho}$ and $E^\rho_\om: \Om^0(M,
      \R^3) \to \Om^+ $ is the linear first order differential operator given
      by~\eqref{def:E}. It vanishes whenever $\nabla \om_i = 0$ for $i=1,2,3$.
    \end{lemma}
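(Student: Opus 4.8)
The plan is to derive all seven items from the single adjoint computation in (i), together with the pointwise formula $R^\rho\xi=\xi-\inner{\xi}{\frac{\rho^+}{u}}_g\rho$ for $\xi\in\Om^+$ (which follows from $\dvol_\rho=u\dvol$ and $\xi\wedge\rho=\xi\wedge\rho^+=\inner{\xi}{\rho^+}_g\dvol$) and the frame identity~\eqref{eq:starRhoJrhoIdentity}.

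For (i) I would compute $\int_M\inner{S^\rho\lambda}{\xi}_g\dvol$ for $\xi\in\Om^+$ by treating the two summands of $S^\rho\lambda=-R^\rho d^{+^\rho}\lambda+u\nabla_{X_\lambda}\frac{\rho^+}{u}$ separately. For the first, one uses that $R^\rho$ is a wedge-preserving involution carrying $\Om^+$ onto $\Om^{+^\rho}$, so $\inner{R^\rho d^{+^\rho}\lambda}{\xi}_g\dvol=R^\rho d^{+^\rho}\lambda\wedge\xi=d^{+^\rho}\lambda\wedge R^\rho\xi=d\lambda\wedge R^\rho\xi$, the last equality because $R^\rho\xi$ is $*^\rho$-self-dual so the $*^\rho$-anti-self-dual part of $d\lambda$ drops out; Stokes then moves $d$ onto $R^\rho\xi$, and since $d(R^\rho\xi)=*^\rho d^{*^\rho}(R^\rho\xi)$ for the $*^\rho$-self-dual two-form $R^\rho\xi$, this contributes $-d^{*^\rho}(R^\rho\xi)$ to the adjoint. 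For the second summand I use the pointwise identity $u\,\alpha(X_\lambda)\dvol=-\lambda\wedge\rho\wedge\alpha$, valid for any one-form $\alpha$ and obtained by contracting the vanishing five-form $\rho\wedge\rho\wedge\alpha$ with $X_\lambda$, applied to $\alpha=g(\nabla\frac{\rho^+}{u},\xi)$; rewriting $-\lambda\wedge\rho\wedge\alpha=\lambda\wedge*^\rho\!\left(*^\rho(\alpha\wedge\rho)\right)$ shows the corresponding adjoint term is $*^\rho\!\left(g(\nabla\frac{\rho^+}{u},\xi)\wedge\rho\right)$. This gives (i).

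Rewriting (i) as $(S^\rho)^{*^\rho}\xi=*^\rho\!\left(d(R^\rho\xi)+g(\nabla\frac{\rho^+}{u},\xi)\wedge\rho\right)$ makes (ii)--(vi) essentially mechanical. For (iii), with $\xi=\om\in\Om^+$ one has $R^\rho\om=\om-K_\om\rho$ where $K_\om=\frac{\om\wedge\rho}{\dvol_\rho}=\inner{\om}{\frac{\rho^+}{u}}_g$, hence $d(R^\rho\om)=d\om-dK_\om\wedge\rho$ since $d\rho=0$; because $dK_\om=d\inner{\om}{\frac{\rho^+}{u}}_g=g(\nabla\om,\frac{\rho^+}{u})+g(\nabla\frac{\rho^+}{u},\om)$ as one-forms, the $\frac{\rho^+}{u}$-derivatives cancel and one is left with $(S^\rho)^{*^\rho}\om=*^\rho\!\left(d\om-g(\nabla\om,\frac{\rho^+}{u})\wedge\rho\right)$, which is visibly free of derivatives of $\rho$ and vanishes when $\nabla\om=0$. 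Item (ii) is the substitution $\xi=\frac{2\rho^+}{u}$, using $\dvol_\rho=u\dvol$, $\rho^+\wedge\rho=\Abs{\rho^+}^2\dvol$ and $g(\nabla\frac{\rho^+}{u},\frac{2\rho^+}{u})=d\Abs{\frac{\rho^+}{u}}^2$, after which the terms reassemble into $*^\rho d\theta^\rho$; item (iv) is the Leibniz rule for $d$ together with the $\Om^0$-linearity of $R^\rho$; item (v) is (iv) plus $*^\rho(df_i\wedge R^\rho\om_i)=df_i\circ J_i^\rho$, which is~\eqref{eq:starRhoJrhoIdentity} with $\om_i^\rho=R^\rho\om_i$; item (vi) is (v) combined with (iii), since in the hyperK\"ahler case the standard frame can be chosen parallel, so $(S^\rho)^{*^\rho}\om_i=0$.

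The one substantial computation is (vii). Starting from (v), I would apply the first-order operator $S^\rho$ to each summand of $(S^\rho)^{*^\rho}(\sum_j f_j\om_j)=\sum_j\left(df_j\circ J_j^\rho+f_j(S^\rho)^{*^\rho}\om_j\right)$, using a Leibniz rule for $S^\rho$ parallel to (iv). For the terms $S^\rho(df_j\circ J_j^\rho)$ one rewrites $df_j\circ J_j^\rho$ through the $\rho$-Hamiltonian vector field $X_{f_j}$ of $f_j$, so that the principal part $-R^\rho d^{+^\rho}(df_j\circ J_j^\rho)$, after expanding the $*^\rho$-self-dual projection of $d(df_j\circ J_j^\rho)$ in the frame $\{\om_i\}$ with the help of Lemmas~\ref{lem:keyRegularity} and~\ref{cor:keyRegularity}, produces in each cyclic slot the second-order term $d^{*^\rho}df_i$ and the Poisson-bracket terms $u(\{f_j,K_k^\rho\}_\rho-\{K_j^\rho,f_k\}_\rho)$, while the transport term $u\nabla_{X_{df_j\circ J_j^\rho}}\frac{\rho^+}{u}$ and every contribution containing $\nabla\om_i$ or $df_j\wedge d\om_k$ is absorbed into the error operator $E^\rho_\om f$ of~\eqref{def:E}; the terms $S^\rho(f_j(S^\rho)^{*^\rho}\om_j)$ supply the remaining pieces of $E^\rho_\om f$, in particular the summand $2S^\rho\sum_j f_j(S^\rho\om_j)^{*^\rho}$. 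Since each of these contributions vanishes whenever $\nabla\om_i=0$ by (iii) and the case already handled, so does $E^\rho_\om f$. I expect the main obstacle to be precisely this bookkeeping: cleanly separating the genuinely second-order part, the Poisson-bracket part and the curvature/frame-error part, and keeping the cyclic-index conventions and all signs consistent across the three groups of terms.
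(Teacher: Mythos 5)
Your (i)--(vi) are fine. For (i) you compute from the $S^\rho\lambda$ side, using that $R^\rho$ is a wedge-preserving involution exchanging $\Om^+$ and $\Om^{+^\rho}$ together with the contraction identity $u\,\alpha(X_\lambda)\dvol=-\lambda\wedge\rho\wedge\alpha$; the paper verifies the adjoint in the opposite direction, starting from the candidate $(S^\rho)^{*^\rho}\xi$ and using $*^\rho\iota(X)\rho=-\rho\wedge g(X,\cdot)$ and the definition of the Donaldson metric. Both are correct, and your (ii)--(vi) coincide with the paper's arguments.

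The gap is in (vii), in the allocation of terms you describe. You claim that the principal part $-R^\rho d^{+^\rho}(df_j\circ J_j^\rho)$ alone produces the main terms $d^{*^\rho}df_i-u\left(\{f_j,K^\rho_k\}_\rho-\{K^\rho_j,f_k\}_\rho\right)$, while the transport term $u\nabla_{X_{df_j\circ J_j^\rho}}\frac{\rho^+}{u}$ is ``absorbed into'' $E^\rho_\om f$. This cannot work with $E^\rho_\om$ as defined in~\eqref{def:E}: that operator vanishes whenever $\nabla\om_i=0$ (as it must, for Theorem~\ref{thm:evolutionK}(iv)), whereas in a parallel frame the transport term equals $\frac{u}{2}\sum_i dK^\rho_i(X_\lambda)\,\om_i\neq 0$ for $\lambda=\sum_j df_j\circ J_j^\rho$, and it is precisely this contribution that supplies part of the Poisson-bracket main terms. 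So transport and principal parts must be combined \emph{before} expanding in the frame. The paper does this by first deducing from (i)/(v) the pointwise identity $S^\rho\lambda=\frac12\sum_i\left(-d^{*^\rho}(\lambda\circ J_i^\rho)+\inner{(S^\rho)^{*^\rho}\om_i}{\lambda}_\rho\right)\om_i$, and then computing $-d^{*^\rho}\left(\sum_j df_j\circ J_j^\rho\circ J_i^\rho\right)$ via the quaternion relations for the $J^\rho_i$, identity~\eqref{eq:starRhoJrhoIdentity}, $d(R^\rho\om_i)=d\om_i-dK^\rho_i\wedge\rho$, and $df\wedge dK\wedge\rho=\{f,K\}_\rho\,\dvol_\rho$; the first summand of~\eqref{def:E} is the $\inner{(S^\rho)^{*^\rho}\om_i}{\cdot}_\rho$ piece of that identity, not a remnant of the transport term. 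Moreover, Lemmas~\ref{lem:keyRegularity} and~\ref{cor:keyRegularity} concern variations and Lie derivatives of $\rho$ and are not the tools that perform this frame expansion. As written, your bookkeeping would either produce main terms differing from (vii) by $u\,dK^\rho_i(X_\lambda)$-type terms, or an ``error'' operator that fails to vanish for parallel frames, so this step of the proposal needs to be redone along the lines above.
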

    \begin{proof}
      We prove (i). To compute the adjoint of $S^\rho$ let $\lambda$ be a 1-form
      and $\xi \in \Om^+$, then
      \begin{equation*}
        \begin{split}
          &\int_M \inner{-d^{*^\rho} \left(R^\rho \xi\right) + *^\rho\left( g\left(
          \nabla\frac{\rho^+}{u}, \xi \right)\wedge \rho  \right)}{\lambda}_{g^\rho} \dvol\\
          &\qquad =
          \int_M \inner{\xi}{-R^\rho d^{+^\rho} \lambda}_g\dvol + \int_M
          \inner{\rho\left(Y_{g\left(\nabla{\frac{\rho^+}{u}},\xi\right)}, \cdot\right)}{\rho(X_\lambda,
          \cdot)}_{g^\rho}\dvol,
        \end{split}
      \end{equation*}
      where
      $$
      g\left(Y_{g\left(\nabla{\frac{\rho^+}{u}},\xi \right)},\cdot\right) :=
      g\left(\nabla{\frac{\rho^+}{u}},\xi\right), \qquad \rho(X_\lambda, \cdot) :=
      \lambda.
      $$
      Here we used the identity
      $$
      *^\rho \iota(X)\rho = -\rho \wedge g(X, \cdot)
      $$
      in the last equation. By the definition of the Donaldson metric
      \begin{equation*}
        \begin{split}
          \int_M \inner{\rho\left(Y_{g\left(\nabla{\frac{\rho^+}{u}},\xi\right)},
        \cdot\right)}{\rho(X_\lambda, \cdot)}_{g^\rho}\dvol
        &=
        \int_M g\left(Y_{g\left(\nabla{\frac{\rho^+}{u}},\xi\right)}, X_\lambda
        \right)\dvol_\rho\\
        &=
        \int_M g\left(\nabla_{X_\lambda}{\frac{\rho^+}{u}},\xi\right)\dvol_\rho\\
      \end{split}
    \end{equation*}
    Thus we have proved that
    \begin{multline*}
      \int_M \inner{-d^{*^\rho} \left(R^\rho \xi\right) + *^\rho\left( g\left(
      \nabla\frac{\rho^+}{u}, \xi \right)\wedge \rho  \right)}{\lambda}_{g^\rho} \dvol\\
      =
      \int_M \inner{\xi}{-R^\rho d^{+^\rho} \lambda + u
      \nabla_{X_\lambda}{\frac{\rho^+}{u}}}_g\dvol
    \end{multline*}
    for all self-dual two-forms $\xi \in \Om^+(M)$ and one-forms $\lambda \in
    \Om^1(M)$. This proves (i).

    Part (ii) follows from the computation
    \begin{equation*}
      \begin{split}
        (S^\rho)^{*^\rho} \frac{2\rho^+}{u}
        &=
        {*^\rho}dR^{\rho}\frac{2\rho^+}{u} + *^\rho \left(2 g\left(
        \nabla\frac{\rho^+}{u},\frac{\rho^+}{u} \right)\wedge\rho \right)\\
        &=
        {*^\rho}d\left(R^{\rho}\frac{2\rho^+}{u} +
        \Abs{\frac{\rho^+}{u}}^2\rho\right)\\
        &=
        {*^\rho}d\left(\frac{2\rho^+}{u} - \frac{\rho \wedge
          \frac{2\rho^+}{u}}{\dvol_\rho}\rho +
          \Abs{\frac{\rho^+}{u}}^2\rho\right)\\
          &=
          {*^\rho}d\left(\frac{2\rho^+}{u} -
          \Abs{\frac{\rho^+}{u}}^2\rho\right)\\
          &=
          {*^\rho}d\theta^\rho,
        \end{split}
      \end{equation*}
      where we used that $*^\rho R^\rho \frac{2\rho^+}{u} = R^\rho
      \frac{2\rho^+}{u}$ in the first equation. This proves (ii).

      We prove (iii). Let $\om \in \Om^+$. Observe that
      \begin{equation*}
        \begin{split}
          d\left(R^\rho \om\right)
          &=
          d \left(\om - \frac{\om \wedge \rho}{\dvol_\rho} \rho\right)\\
          &=
          d\om  - \left(g\left( \nabla \om, \frac{\rho^+}{u} \right) + g\left(\om,
          \nabla \frac{\rho^+}{u}\right)\right) \wedge \rho.
        \end{split}
      \end{equation*}
      and hence
      \begin{equation*}
        \begin{split}
          (S^\rho)^{*^\rho} \om
          &=
          *^\rho d \left(R^\rho \om \right)+ *^\rho\left( g\left(
          \nabla\frac{\rho^+}{u},\om \right)\wedge \rho \right)\\
          &=
          *^\rho \left(d\om  -\left(g\left( \nabla \om, \frac{\rho^+}{u}
          \right)\wedge \rho \right)\right)
        \end{split}
      \end{equation*}
      Since $\nabla \om =0$ implies $d\om =0$ it follows from the last equation that
      $(S^\rho)^{*^\rho} \om = 0$. This proves (iii).

      We prove (iv). Let $\xi \in \Om^+$ and $f \in \Om^0(M, \R)$. Then
      \begin{equation*}
        \begin{split}
          (S^\rho)^{*^\rho}\left( f \xi \right)
          &=
          *^\rho d \left( R^\rho f\xi \right) + *^\rho\left( g\left( \nabla
          \frac{\rho^+}{u}, f\xi \right)\wedge \rho \right)\\
          &=
          *^\rho \left( df \wedge R^\rho \xi \right) + f *^\rho d\left( R^\rho
          \xi\right)
          + f *^\rho\left( g\left( \nabla \frac{\rho^+}{u}, \xi \right)\wedge \rho
          \right)\\
          &=
          *^\rho \left( df \wedge R^\rho \xi \right) + f (S^\rho)^{*^\rho} \xi.
        \end{split}
      \end{equation*}
      This proves (iv).

      We prove (v). It follows from (iv) that
      \begin{equation*}
        \begin{split}
          (S^\rho)^{*^\rho} (\sum_i f_i \om_i)
          &=  \sum_i\left(*^\rho  \left(df_i \wedge R^\rho
          \om_i\right) +  f_i (S^\rho)^{*^\rho} \om_i\right)\\
          &=
          \sum_i \left(df_i \circ J_i^\rho +
          f_i (S^\rho)^{*^\rho} \om_i\right),
        \end{split}
      \end{equation*}
      where the last equality follows from
      identity~\eqref{eq:starRhoJrhoIdentity}. This proves (v).

      (vi) follows directly from (v) since $(S^\rho)^{*^\rho} \om_i = 0$ by (iii)
      for the hyperK\"ahler structures $\om_1, \om_2, \om_3$.

      We prove (vii).
      It follows from (v) that in an standard frame
      $\om_1,\om_2,\om_3$ for $\Om^+$, $S^{\rho}$ is given by
      $$
      S^{\rho}\lambda = \frac{1}{2}\sum_i \left( -d^{*^\rho}\left(
      \lambda \circ J_i^\rho
      \right) + \inner{(S^\rho)^{*^\rho}\om_i}{\lambda}_{\rho}\right)\om_i.
      $$
      for a 1-form $\lambda$.  Again by (v)
      \begin{equation*}
        \begin{split}
          2S^\rho\left(S^{\rho}\right)^{*^\rho}  \sum_j f_j \om_j
          &=
          2S^{\rho}(\sum_j df_j \circ J_j^\rho  + f_j (S^\rho)^{*^\rho}
          \om_j)\\
          &=
          \sum_{i,j} -d^{*^\rho}(df_j \circ J_j^\rho \circ J_i^\rho)\om_i\\
          &\quad
          + \sum_i \inner{(S^\rho)^{*^\rho} \om_i}{\sum_j df_j \circ J_j^\rho}_{\rho} \om_i +
          2S^{\rho}\sum_j f_j \left(S^\rho \right)^{*^\rho}\om_j.
        \end{split}
      \end{equation*}
      Observe that
      $$
      d\left( R^\rho \om_i \right) = d\left( \om_i - K_i^\rho \rho \right) = d\om_i
      - dK^\rho_i \wedge \rho.
      $$
      Then by identity~\eqref{eq:starRhoJrhoIdentity} and the hyperK\"ahler
      relations for $J_i^\rho$
      \begin{align*}
        &d {*^\rho} (\sum_j df_j \circ J_j^\rho \circ J_1^\rho)
        =
        d{*^\rho}\left(-df_1 - df_2 \circ J^\rho_3 + df_3\circ J^\rho_2
        \right)\\
        &\quad=
        -d {*^\rho} df_1 + d \left(df_2\wedge R^\rho \om_3  - df_3\wedge R^\rho
        \om_2 \right)\\
        &\quad=
        -d{*^\rho}df_1 - df_2\wedge dK^\rho_3 \wedge \rho  + df_3\wedge
        dK^\rho_2\wedge\rho + df_2\wedge d\om_3 - df_3 \wedge d\om_2 \\
        &\quad=
        -d{*^\rho}df_1 - \iota\left(X_{f_2}\right) \rho \wedge \iota\left(
        X_{K^\rho_3}\right)\rho \wedge \rho  + \iota\left( X_{f_3} \right)\rho \wedge
        \iota\left( X_{K^\rho_2} \right)\rho \wedge\rho\\
        &\qquad \qquad
        + df_2\wedge d\om_3 - df_3 \wedge d\om_2\\
        &\quad=
        -d{*^\rho}df_1 - \{f_2,K^\rho_3\}_\rho \dvol_\rho - \{K^\rho_2,f_3\}_\rho
        \dvol_\rho+ df_2\wedge d\om_3 - df_3 \wedge d\om_2
      \end{align*}
      and hence
      \begin{equation*}
        \begin{split}
          \sum_{i,j} -d^{*^\rho}(df_j \circ J_j^\rho \circ J_i^\rho)\om_i
          &=
          \sum_{i,j,k\, \text{cyclic}} \left(d^{*^\rho} d f_i -
          u\left(\{f_j,K^\rho_k\}_\rho - \{K^\rho_j,f_k\}_\rho\right)\right)  \om_i\\
          &\qquad
          + \sum_{i,j,k\, \text{cyclic}} *^\rho \left(df_j \wedge d\om_k -
          df_k \wedge d \om_j\right)\om_i.
        \end{split}
      \end{equation*}
      This proves (vii).
    \end{proof}

    We end the section with a proof of Theorem~\ref{thm:evolutionK}.
    \begin{proof}[Proof of Theorem~\ref{thm:evolutionK}] \label{proof:evolutionK}
      We prove (i). Let $\rho$ be a smooth solution to the Donaldson flow. Recall
      that the Donaldson flow equation is $\p_t \rho = d *^\rho d \theta^\rho$.
      Then by equation~\eqref{eq:Khat}
      $$
      \p_t \frac{\rho^+}{u} = \frac{1}{u} (R^\rho \p_t \rho)^+ = \frac{1}{u} R^\rho
      (\p_t \rho)^{+^\rho}= \frac{1}{u} R^\rho d^{+^\rho}*^\rho d \theta^\rho.
      $$
      Here we use that $R^\rho$ maps $\Lambda^+$ to $\Lambda^{+^\rho}$ and vise
      versa in the second equation.  This proves (i).

      We prove (ii). By the definition of the operator $S^\rho$ given by
      \eqref{def:Srho} and Lemma~\ref{lem:Srho} (ii),
      \begin{equation*}
        \begin{split}
          -\frac{2}{u}S^\rho \left(S^\rho\right)^{*^\rho}\frac{\rho^+}{u} =
          -\frac{1}{u}S^\rho\left(*^\rho d\theta^\rho\right) =
          \frac{1}{u}R^\rho d^{+^\rho}*^\rho d\theta^\rho  - \nabla_{X_{(*^\rho
          d\theta^\rho})}\frac{\rho^+}{u}.
        \end{split}
      \end{equation*}
      Together with (i), this proves (ii).

      We prove (iii). First note that in a local standard frame $\om_1, \om_2,
      \om_3$ for $\Om^+$ we have $ \frac{2\rho^+}{u} = \sum_i K_i \om_i$
      and by Lemma~\ref{lem:Srho} (ii) and (v)
      $$
      *^\rho d \theta^\rho = (S^\rho)^{*^\rho} \frac{2\rho^+}{u} = \sum_i \left( d K_i \circ
      J_i^\rho + K_i (S^\rho)^{*^\rho} \om_i \right).
      $$
      Since
      $$
      \rho(J_i X_{K_i},\cdot) = \rho(X_{K_i}, J_i^\rho \cdot)
      $$
      the vector field $X_{(*^\rho d \theta^\rho)}$ that satisfies $\rho(X_{(*^\rho d
      \theta^\rho)}, \cdot)= *^\rho d \theta^\rho$ is given by
      $$
      \sum_j\left( J_j X_{K_j} + K_j X_{(S^\rho)^{*^\rho} \om_j}\right).
      $$
      Hence,
      \begin{equation*}
        \begin{split}
          \nabla_{X_{*^\rho d\theta^\rho}}\frac{2\rho^+}{u}
          &=
          \nabla_{X_{*^\rho d\theta^\rho}}\sum_i K_i \om_i\\
          &=
          \sum_i \left(dK_i(\sum_j J_j X_{K_j}) + \sum_j K_j dK_i
          (X_{(S^\rho)^{*^\rho}
        \om_j})\right)\om_i \\
        &\qquad + \sum_i K_i \nabla_{X_{*^\rho d \theta^\rho}} \om_i\\
        &=
        \sum_i \rho\left( X_{K_i}, \sum_j J_j X_{K_i} \right)\om_i +
        \sum_{i,j} K_j dK_i (X_{(S^\rho)^{*^\rho}
      \om_j})\om_i\\
      &\qquad
      + \sum_i K_i \nabla_{X_{*^\rho d \theta^\rho}} \om_i\\
      &=
      \sum_i \rho\left( X_{K_i}, \sum_j J_j X_{K_i} \right)\om_i +
      {E'}^\rho_\om K
    \end{split}
  \end{equation*}
  for $K = (K_1, K_2, K_3)$.
  By Lemma~\ref{lem:Srho} (vii)
  \begin{equation*}
    \begin{split}
      \frac{2}{u}S^\rho \left(S^\rho\right)^{*^\rho}\frac{2\rho^+}{u}
      &=
      \frac{2}{u}S^\rho\left(S^\rho\right)^{*^\rho}\left( \sum_j K_j \om_j\right)\\
      &=
      \sum_{i,j,k\, \text{cyclic}} \left(\frac{1}{u}d^{*^\rho} d K_i -
      2\{K_j,K_k\}_\rho\right) \om_i + \frac{1}{u}E^\rho_\om K.
    \end{split}
  \end{equation*}
  Thus, from
  $$
  \p_t \frac{2\rho^+}{u} = -\frac{2}{u} \left( S^\rho \right)^{*^\rho}S^\rho
  \frac{2\rho^+}{u} + \nabla_{X_*^\rho d\theta^\rho}\frac{2\rho^+}{u}
  $$
  we find that
  \begin{equation*}
    \begin{split}
      \p_t \sum_i K_i \om_i
      &=
      -\sum_{i,j,k\, \text{cyclic}} \left(\frac{1}{u}d^{*^\rho} d K_i -
      2\{K_j,K_k\}_\rho - \rho\left( X_{K_i}, \sum_\ell J_\ell
      X_{K_\ell}\right) \right)\om_i\\
      &\qquad
      - \frac{1}{u}E^\rho_\om K + {E'}^\rho_\om(K).
    \end{split}
  \end{equation*}
  Finally, (iv) follows from (v), since $\nabla \om_i = 0$ for the three
  hyperK\"ahler structures $\om_1, \om_2, \om_3$ and therefore the error
  terms $E^\rho_\om$ and ${E'}_\om$ vanish.
\end{proof}

\section{Regularity}\label{sec:regularity}
In this section we prove that a solution to the Donaldson flow that is element
of $L^2(I, W^{2,p}) \cap W^{1,2}(I, L^p)$ for $p>4$ is as smooth as it's
initial condition allows. In particular it is smooth if its initial conditions
are smooth. The proof combines two insights. First, the regularity of
$\frac{\rho^+}{u}$ determines the regularity of $\rho$. This is the content of
theorem~\ref{thm:K} (iii). Second, the evolution of $\frac{\rho^+}{u}$ is
given by a parabolic operator, where the right hand side of the equation is
essentially a product of two derivatives. This is the content of
Theorem~\ref{thm:evolutionK} (iii). This allows bootstrapping. The details are
given in the next two theorems. The first theorem illustrates the ideas in the
simpler case of a critical point.

\begin{theorem}[\bf Critical Point]\label{thm:criticalPoints}
  Let $p>4$ and let $\rho \in W^{1,p}(M,
  \Lambda^2)$ be a critical point of the Donaldson flow. Then $\rho$ is
  smooth.
\end{theorem}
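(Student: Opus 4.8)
The plan is to transfer the question from $\rho$ to the self-dual form $K(\rho)=\rho^+/u$: at a critical point $K$ solves a second-order elliptic system whose right-hand side is at most bilinear in first derivatives and carries no term quadratic in $\partial\rho$, so one can bootstrap $K$ to smoothness and then conclude that $\rho$ is smooth by Theorem~\ref{thm:K}(iii).

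\emph{Setup and the elliptic system for $K$.} A critical point lies in $\sS^{1,p}$, so (as $p>4$) $\rho$ is continuous, $u=\rho\wedge\rho/2\dvol$ is bounded below by a constant $c_0>0$, and the pointwise-algebraic quantities $g^\rho,*^\rho,R^\rho$, together with $1/u$ and $K=\rho^+/u$, all lie in $W^{1,p}$ (products and compositions with smooth functions; see the appendix). The critical-point equation $d*^\rho d\Theta^\rho=0$ says the constant path $\rho$ solves the flow, so $\partial_t(\rho^+/u)=0$; the computation proving Theorem~\ref{thm:evolutionK}(iii) uses only algebraic operations in $\rho,1/u$ and first derivatives, hence remains valid for $\rho\in W^{1,p}$, $p>4$ (the identity is pointwise-algebraic and first order in $\rho$, so it passes to the $W^{1,p}$ setting, e.g.\ by approximation), and gives, in any local smooth standard frame $\omega_1,\omega_2,\omega_3$ of $\Omega^+$, for the functions $K_i=\rho\wedge\omega_i/\dvol_\rho$:
\[
  d^{*^\rho}dK_i
  = 2u\,\{K_j,K_k\}_\rho + u\,\rho\bigl(X_{K_i},\textstyle\sum_\ell J_\ell X_{K_\ell}\bigr)
  - (E^\rho_\omega K)_i + u\,({E'}^\rho_\omega K)_i ,
\]
where $(i,j,k)$ runs over the cyclic permutations of $(1,2,3)$. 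Here $d^{*^\rho}d$ is the scalar Laplacian $\Delta_{g^\rho}$, which since $\dvol_{g^\rho}=\dvol$ equals $-\tfrac1{\sqrt{\det g}}\partial_a\bigl(\sqrt{\det g}\,(g^\rho)^{ab}\partial_b f\bigr)$ on functions $f$ — uniformly elliptic (the $\rho$ range over a compact subset of $\{\rho^2>0\}$), with continuous leading coefficients $(g^\rho)^{ab}\in C^{0,1-4/p}$ and subprincipal part of the shape $(\text{continuous})\cdot\partial\rho$. Using Lemma~\ref{lem:Srho} to expand $E^\rho_\omega,{E'}^\rho_\omega$, reading off the $\omega_i$-components, and moving everything of order $\le1$ to the right, the system takes the schematic form
\[
  -(g^\rho)^{ab}\partial_a\partial_b K = \mathcal{Q}(\partial K,\partial K) + \mathcal{R}(\partial\rho,\partial K) + c_1\,\partial K + c_2\,\partial\rho ,
\]
with $\mathcal{Q},\mathcal{R}$ bilinear and all coefficients ($c_1,c_2$ included) universal smooth functions of $\rho,1/u,K$ and the fixed background data; the decisive structural point is that there is no term quadratic in $\partial\rho$ and no zeroth-order term.

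\emph{Bootstrap.} Initially $\partial K,\partial\rho\in L^p$, so the right-hand side is in $L^{p/2}$, and the $L^q$-theory for $-(g^\rho)^{ab}\partial_a\partial_b$ (valid since the leading coefficients are continuous) yields $K\in W^{2,p/2}$, hence $\partial K\in W^{1,p/2}\hookrightarrow L^{a_1}$. Iterating while $\partial\rho$ is only known to lie in $L^p$, one gets $K\in W^{2,r_n}$ with $\tfrac1{r_n}=\tfrac1p+\tfrac1{a_n}$, hence $\partial K\in L^{a_{n+1}}$ with $\tfrac1{a_{n+1}}=\tfrac1{a_n}-\bigl(\tfrac14-\tfrac1p\bigr)$: since $p>4$ the exponent $\tfrac1{a_n}$ drops by a fixed positive amount each step, so after finitely many steps $K\in W^{2,p}$. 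Then Theorem~\ref{thm:K}(iii) with $k=1$ gives $\rho\in W^{2,p}$, so $\partial\rho\in W^{1,p}\hookrightarrow L^\infty$ and $K,\rho\in C^{1,\alpha}$. Now climb the $W^{m,p}$-ladder: if $K,\rho\in W^{m,p}$ for some $m\ge2$, then, $p>4$ ensuring $W^{m,p}\hookrightarrow C^{m-1}$ and $W^{m-1,p}$ a Banach algebra, all coefficients lie in $W^{m,p}$ and the right-hand side of the $K$-system (a finite sum of products of $\partial K,\partial\rho$ with such coefficients) lies in $W^{m-1,p}$; the $W^{m+1,p}$-estimate for $-(g^\rho)^{ab}\partial_a\partial_b$, whose coefficients now lie in $W^{m,p}$, then gives $K\in W^{m+1,p}$, and Theorem~\ref{thm:K}(iii) with $k=m$ gives $\rho\in W^{m+1,p}$. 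By induction $\rho\in\bigcap_{m\ge1}W^{m,p}=C^\infty$.

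\emph{Main obstacle.} The real difficulty is the coupling of the two regularities: $-(g^\rho)^{ab}\partial_a\partial_b$ has coefficients built from $\rho$, so $K$ cannot be improved without improving $\rho$, while $\rho$ cannot be pushed past the regularity of $K$ — which is exactly why one alternates the elliptic estimate for $K$ with Theorem~\ref{thm:K}(iii) for $\rho$. Two technical points need care: that Theorem~\ref{thm:evolutionK}(iii) is legitimate for $\rho$ only in $W^{1,p}$ rather than smooth (it is, being algebraic and first order in $\rho$), and that the first step from $W^{1,p}$ to $W^{2,p}$ genuinely closes — which uses $p>4$, i.e.\ that we are in the subcritical range where $W^{1,p}\hookrightarrow C^0$ and where the gradient-quadratic term $\mathcal{Q}(\partial K,\partial K)$ and the mixed term $\mathcal{R}(\partial\rho,\partial K)$ strictly improve integrability at each step.
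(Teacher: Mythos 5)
Your proposal is correct and follows essentially the same route as the paper: both derive the elliptic system for $K=\rho^+/u$ at a critical point from Theorem~\ref{thm:evolutionK}(iii), bootstrap its regularity using elliptic estimates for the operator with merely continuous leading coefficients built from $\rho$, and transfer the gain back to $\rho$ via Theorem~\ref{thm:K}(iii), iterating to smoothness. The only difference is bookkeeping: the paper raises the integrability exponents of $K$ and $\rho$ in tandem at the $W^{1,\cdot}$ level (via Lemma~\ref{lem:regKtoRho}) before passing to second derivatives, whereas you keep $\rho\in W^{1,p}$ fixed until $K\in W^{2,p}$ and then climb the $W^{m,p}$ ladder — both schemes close.
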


The next lemma will be needed in the bootstrapping process.
\begin{lemma}\label{lem:regKtoRho}
Let $p'> p > 4$. If $\rho \in W^{1,p}$ is a symplectic form with $
\frac{\rho^+}{u} \in W^{1,p'} $ then $\rho \in W^{1,p'}$ as well.
\end{lemma}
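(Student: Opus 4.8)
The plan is to deduce this from Theorem~\ref{thm:K}(iii), which is the engine that converts regularity of $\frac{\rho^+}{u}$ into regularity of $\rho$. The only subtlety is that Theorem~\ref{thm:K}(iii) is stated as a jump from $W^{k,p}$ to $W^{k+1,p}$ for the \emph{same} exponent $p$, whereas here the hypothesis on $\frac{\rho^+}{u}$ gives an improvement of the \emph{exponent} (from $p$ to $p'$) at the same order of differentiation $k=1$. So first I would set $k=1$ in Theorem~\ref{thm:K}(iii): since $p>4$ we have $k-\tfrac4p = 1-\tfrac4p>0$, so the hypothesis is satisfied, and the conclusion is that if $\rho\in\sS^{1,p}$ and $K(\rho)=\frac{\rho^+}{u}\in W^{2,p}$ with $\tfrac1u$ bounded, then $\rho\in\sS^{2,p}$, with the quantitative estimate. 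But that is not quite what I want either — I want to stay at first order and only raise $p$.

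The cleanest route is therefore to re-examine the proof of Theorem~\ref{thm:K}(iii) rather than just cite its statement. There, one takes an arbitrary Lie derivative $\cL_X\rho$, uses Cartan's formula to see it is exact, and via Lemma~\ref{cor:keyRegularity} writes
\begin{equation*}
(d+d^{*^\rho})\cL_X\rho = -*^\rho d\Bigl(u R^\rho\bigl(\cL_X\tfrac{\rho^+}{u}-\tfrac{\cL_X\dvol}{\dvol}\tfrac{\rho^+}{u}-\tfrac12\tfrac{(\cL_X*)\rho}{u}\bigr)\Bigr),
\end{equation*}
whose right-hand side has the schematic form $P_1(\tfrac1u,\rho)\,\p^2\tfrac{\rho^+}{u}+P_2(\tfrac1u,\rho)\,\p\rho\,\p\tfrac{\rho^+}{u}+P_3(\tfrac1u,\rho)\,\p\rho$. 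In the present situation $\rho\in W^{1,p}$ and $\frac{\rho^+}{u}\in W^{1,p'}$; I want to run this with $X$ a constant (coordinate) vector field, so that $\cL_X\rho = d(\iota_X\rho)+\iota_X d\rho = d(\iota_X\rho)$ involves first derivatives of $\rho$, and elliptic regularity for the first-order elliptic operator $d+d^{*^\rho}$ (with $W^{1,p'}$, hence $C^0$, coefficients) gives $\iota_X\rho\in W^{1,q}$ — i.e.\ $\rho\in W^{1,q}$ in the $X$-direction — where $q$ is dictated by the worst term on the right. The first term $\tfrac1u R^\rho\,\p^2\tfrac{\rho^+}{u}$ lies in $L^{p'}$ since $\tfrac1u$ and $\rho$ are bounded and $\frac{\rho^+}{u}\in W^{1,p'}$ means $\p^2\tfrac{\rho^+}{u}$... no: $\frac{\rho^+}{u}\in W^{1,p'}$ only controls one derivative, so after applying $d^{+^\rho}$ the natural estimate places $*^\rho d(\text{stuff})$ in $W^{-1,p'}$ unless one is careful. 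The correct bookkeeping is to not differentiate $\tfrac{\rho^+}{u}$ twice: rewrite the equation as $d^{*^\rho}(\iota_X\rho)$ equals $-*^\rho d$ of a first-order expression in $\tfrac{\rho^+}{u}$ and $\rho$, so the whole right-hand side is $d$ of something in $L^{\min(p,p')}=L^p$, giving $d^{*^\rho}(\iota_X\rho)\in W^{-1,p}$; combined with $d(\iota_X\rho)\in L^p$ this only recovers $W^{1,p}$. To genuinely improve the exponent I would instead bootstrap through the parabolic/elliptic structure more carefully, using that $\p\rho$ appears only multiplied by things that are better than $L^p$.

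Concretely the argument I will commit to: apply the Hodge decomposition used in the proof of Theorem~\ref{thm:K}(i) to $\rhohat := \cL_X\rho = d\lambda$ (with $X$ a coordinate field and $\lambda = \iota_X\rho\in W^{1,p}$, $d^{*^\rho}$-coclosed after projecting off harmonics), so that $\Khat\rhohat = \tfrac1u R^\rho(d\lambda)^{+^\rho}$ equals, by Lemma~\ref{lem:keyRegularity}, the first derivative $\cL_X\tfrac{\rho^+}{u}$ plus lower-order terms (the $\tfrac{\cL_X\dvol}{\dvol}$ and $(\cL_X*)$ pieces), all of which lie in $L^{p'}$ because $\tfrac{\rho^+}{u}\in W^{1,p'}$ and the background data $\dvol,*$ are smooth. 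Thus $(d\lambda)^{+^\rho}\in L^{p'}$, i.e.\ $d^{+^\rho}\lambda\in L^{p'}$; since also $d^{-^\rho}\lambda$ is controlled (the full $d\lambda$ pairs to zero against itself, as in the proof of (i), giving $\|d^{-^\rho}\lambda\|_{L^2}=\|d^{+^\rho}\lambda\|_{L^2}$, and on a compact manifold the $L^{p'}$ bound on $d^{+^\rho}\lambda$ together with $d^{*^\rho}\lambda=0$ feeds the first-order elliptic estimate $\|\lambda\|_{W^{1,p'}}\lesssim \|d^{+^\rho}\lambda\|_{L^{p'}}+\|d^{*^\rho}\lambda\|_{L^{p'}}+\|\lambda\|_{L^{p'}}$, valid because the coefficients of $d^{*^\rho}$ are in $W^{1,p'}\hookrightarrow C^0$), we conclude $\lambda\in W^{1,p'}$, hence $\cL_X\rho = d\lambda$ — no, this gives $d\lambda\in L^{p'}$, which together with the corresponding statement for all coordinate directions $X$ yields $\nabla\rho\in L^{p'}$, i.e.\ $\rho\in W^{1,p'}$. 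I would also note in passing that $\tfrac1u$ is automatically bounded below and above since $\rho\in W^{1,p}\hookrightarrow C^0$ and $\rho$ is nondegenerate on the compact $M$. The main obstacle, and the step deserving care in the write-up, is the last one: verifying that the elliptic estimate for $d^{+^\rho}+d^{*^\rho}$ (equivalently for the Hodge Laplacian of $g^\rho$) holds with $L^{p'}$ right-hand side when the metric $g^\rho$ only has $W^{1,p'}$ (not smooth) coefficients — this is where one invokes the Calderón–Zygmund/Agmon–Douglis–Nirenberg theory for elliptic systems with continuous leading coefficients, exactly as is already used implicitly in the proof of Theorem~\ref{thm:K}.
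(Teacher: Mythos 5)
Your committed argument is, in substance, the paper's proof: differentiate $\rho$ along (coordinate) vector fields, write $\cL_X\rho=d\lambda$ for a suitably gauged primitive $\lambda$, use the Lie-derivative formula of Lemma~\ref{cor:keyRegularity} to see that $(d\lambda)^{+^\rho}\in L^{p'}$ (the extra terms only involve the smooth background $\dvol$ and $*$ together with $C^0$ quantities, and $1/u$ is bounded since $\rho\in W^{1,p}\hookrightarrow C^0$ is nondegenerate), control $\norm{d\lambda}_{L^2}$ via $\int_M d\lambda\wedge d\lambda=0$, and finish with a first-order elliptic estimate to get $d\lambda\in L^{p'}$ in every direction, hence $\rho\in W^{1,p'}$. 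The one genuine divergence is the step you yourself single out: you want the estimate for $d^{+^\rho}+d^{*^\rho}$, whose coefficients are only $W^{1,p}\hookrightarrow C^0$ (not $W^{1,p'}$, which is what is being proved), and you propose to import Calder\'on--Zygmund/ADN theory for elliptic systems with continuous leading coefficients; relatedly, your choice of $\lambda$ (``$\iota_X\rho$, coclosed after projecting off harmonics'') tacitly requires Hodge theory for the non-smooth metric $g^\rho$. The paper avoids both points by freezing the coefficients at a nearby \emph{smooth} form: it fixes $\rho_0$ smooth with $\norm{\rho-\rho_0}_{L^\infty}<\delta$, imposes the gauge $d^{*^{\rho_0}}\lambda=0$, applies the smooth-coefficient elliptic estimate for $d^{+^{\rho_0}}+d^{*^{\rho_0}}$, and absorbs the comparison error $\norm{(*^\rho-*^{\rho_0})d\lambda}_{L^{p'}}\le\delta\norm{d\lambda}_{L^{p'}}$ into the left-hand side for $\delta$ small. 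So your route buys directness at the price of a (standard, but nontrivial) black box for rough-coefficient elliptic systems, while the paper's perturbation argument is self-contained; with that black box granted, your proof is correct.
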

\begin{proof}
Let $\cL \rho$ be a Lie derivative of $\rho$ in an arbitrary direction. Let
$\rho_0$ be a smooth nondegenerate form such that $\norm{\rho -
\rho_0}_{L^\infty} < \delta$ for a small $\delta >0$. Then there exists a
unique 1-form $\lambda \in W^{1,p}$ such that $d\lambda = \cL \rho$ and
$d^{*^{\rho_0}}\lambda = 0$. By elliptic regularity theory for the operator
$d^{+^{\rho_0}} + d^{*^{\rho_0}}$ there exists a constant $c>0$ such that
$$
\norm{\lambda}_{W^{1,{p'}}}\leq
c\left(\norm{(d\lambda)^{+^{\rho_0}}}_{L^{p'}} +
\norm{d\lambda}_{L^2}\right).
$$
Since
$$
\norm{(*^\rho - *^{\rho_0}) d\lambda}_{L^{p'}} \leq \norm{\rho -
\rho_0}_{L^\infty}\norm{d\lambda}_{L^{p'}}
$$
we have
$$
\norm{(d\lambda)^{+^{\rho_0}}}_{L^{p'}} \leq
\norm{(d\lambda)^{+^{\rho}}}_{L^{p'}} + \delta \norm{d\lambda}_{L^{p'}}
$$
Further, since
$$
0 = \int_M d\lambda \wedge d\lambda = \int_M \Abs{d
\lambda}^{+^{\rho}} - \Abs{d\lambda}^{-^{\rho}}.
$$
we have
$$
\norm{d\lambda}_{L^2} \leq 2 \norm{(d\lambda)^{+^\rho}}_{L^2}
$$
By Lemma~\ref{cor:keyRegularity},
$$
(\cL \rho)^{+^\rho} = u R^{\rho}\left(\cL \frac{\rho^+}{u} - \frac{\cL_X
\dvol}{\dvol} \frac{\rho^+}{u} - \frac12\frac{(\cL *) \rho}{u}\right).
$$
Therefore,
\begin{equation*}
\begin{split}
\norm{\lambda}_{W^{1,{p'}}} &\leq c
\left(\norm{(\cL \lambda)^{+^{\rho}}}_{L^{p'}} + \delta
\norm{d\lambda}_{L^{p'}} + 2 \norm{(d\lambda)^{+^{\rho}}}_{L^2}\right)\\
&\leq
\fp_1(C, \norm{\rho}_{L^\infty}) \norm{\cL \frac{\rho^+}{u}}_{L^{p'}} +
\fp_2(C, \norm{\rho}_{L^\infty}) + c \delta \norm{\lambda}_{W^{1,{p'}}}
\end{split}
\end{equation*}
for
$$
C:= \sup_{x \in M} \frac{1}{u(x,t)}, \qquad u := \frac{\rho\wedge
\rho}{2\dvol}.
$$
Since this is true for an arbitrary Lie derivative of $\rho$ and $\delta>0$,
it follows that
$$
\rho \in W^{1,p'}.
$$
This proves the lemma.
\end{proof}

We will need the following lemma on elliptic regularity of the operator
$d^{*^{\rho}}\frac{d}u : C^\infty(M) \to C^\infty(M)$ in the case that $\rho$
and thus the coefficients are not smooth.
\begin{lemma}[{\bf Elliptic Regularity}]\label{lem:ellipticRegularity}
  Let $p>4$, $q>1$ and $k\geq 0$. Let $\rho \in W^{k+1,p}(M, \Lambda^2)$ such
  that $\rho\wedge\rho >0$.  Assume there exists a constant $c_0>0$ such that
  for all $v,w\in C^\infty(M)$ and all $\epsilon > 0$ we can estimate
  \begin{equation}\label{eq:ellRegAssumption}
    \norm{\p v \p w}_{W^{k,q}} \leq c_0
    \norm{v}_{W^{k+1,p}}\left(\frac{1}{\epsilon}\norm{w}_{L^q} + \epsilon
    \norm{w}_{W^{k+2,q}}\right).
  \end{equation}
  Let $v \in W^{1,q}(M)$ and $f\in W^{k,q}(M)$ such that
  $$
  \int_M \frac{1}{u}d v \wedge *^\rho d \phi = \int_M f \phi \dvol
  $$
  for all $\phi \in C^1(M)$.  Then $v \in W^{k+2,q}(M)$ and there exists a
  constant $c=c(q,k,M, \norm{\rho}_{W^{1,p}})$ such that
  $$
  \norm{v}_{W^{k+2,q}} \leq c \left(\norm{f}_{W^{k,q}} +
  \norm{v}_{L^{q}}\right).
  $$
\end{lemma}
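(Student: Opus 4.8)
The plan is to view the hypothesis as the weak formulation of the second--order elliptic equation $d^{*^{\rho}}\bigl(\tfrac{dv}{u}\bigr)=f$, i.e. of $-{*^{\rho}}d\bigl(\tfrac{1}{u}\,{*^{\rho}}dv\bigr)=f$, which in a coordinate chart reads $\tfrac{1}{\sqrt{\det g}}\,\partial_j\bigl(A^{ij}\partial_i v\bigr)=-f$ with a coefficient matrix $A=(A^{ij})$ built algebraically (a rational expression with a power of $u$ in the denominator) from $\rho$, $g$ and $u$. Since $\rho\in W^{k+1,p}$ and $p>4=\dim M$, the embedding $W^{k+1,p}\hookrightarrow C^{k,\al}$ with $\al=1-4/p$ and the product/composition estimates of the appendix give $u\in C^{k,\al}$; $\rho\wedge\rho>0$ on the closed manifold $M$ forces $u\ge u_0>0$, hence $1/u\in W^{k+1,p}$ and $A\in W^{k+1,p}\hookrightarrow C^{k,\al}$, and $A$ is uniformly elliptic; moreover $\dvol_{g^{\rho}}=\dvol$ gives $\sqrt{\det g^{\rho}}=\sqrt{\det g}$, which is smooth. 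Testing with $\phi\equiv1$ yields $\int_M f\,\dvol=0$, so the equation is consistent, and testing with $\phi=v_1-v_2$ shows the solution is unique up to an additive constant. After a partition of unity and flattening it suffices to prove the estimate for the model operator $\partial_j(A^{ij}\partial_i\,\cdot\,)$ on balls in $\R^4$, the cutoff commutators being of lower order.

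First I would prove the a priori estimate, i.e. assume provisionally that $v\in W^{k+2,q}$ and deduce the bound. Rewriting the equation in non--divergence form gives $A^{ij}\partial_i\partial_j v=F$ with $F=-\sqrt{\det g}\,f-(\partial_j A^{ij})\partial_i v$, and one has the $W^{k+2,q}$--estimate $\Norm{v}_{W^{k+2,q}}\le c\bigl(\Norm{F}_{W^{k,q}}+\Norm{v}_{W^{k+1,q}}\bigr)$: for $k=0$ this is the Calder\'on--Zygmund / VMO--coefficient $L^q$--theory for non--divergence elliptic operators (the coefficients lie in $C^{0,\al}\subset\mathrm{VMO}$ and are uniformly elliptic), and for $k\ge1$ one obtains it by differentiating $k$ times and reducing to the case $k=0$, using $\partial^{\be}A\in W^{k+1-\abs{\be},p}\hookrightarrow L^\infty$ for $1\le\abs{\be}\le k$ together with the appendix product estimates. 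It remains to control $\Norm{F}_{W^{k,q}}$, and the only delicate contribution is $(\partial A)(\partial v)$; here hypothesis \eqref{eq:ellRegAssumption} --- which is exactly a Sobolev multiplication estimate packaged with a Gagliardo--Nirenberg interpolation --- applies with its first slot a component of $A\in W^{k+1,p}$ and its second slot $v$, giving, for every $\eps>0$, $\Norm{(\partial A)(\partial v)}_{W^{k,q}}\le c_0\Norm{A}_{W^{k+1,p}}\bigl(\eps^{-1}\Norm{v}_{L^q}+\eps\Norm{v}_{W^{k+2,q}}\bigr)$. Combining this with the interpolation inequality $\Norm{v}_{W^{k+1,q}}\le\eps\Norm{v}_{W^{k+2,q}}+C_\eps\Norm{v}_{L^q}$ and choosing $\eps$ small enough to absorb the $\eps\Norm{v}_{W^{k+2,q}}$ terms into the left--hand side yields $\Norm{v}_{W^{k+2,q}}\le c\bigl(\Norm{f}_{W^{k,q}}+\Norm{v}_{L^q}\bigr)$, with $c$ depending on $q$, $k$, $M$, the ellipticity, $c_0$ and $\Norm{\rho}_{W^{k+1,p}}$.

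To remove the provisional assumption I would use an approximation argument: mollify to obtain $\rho_\de\to\rho$ in $W^{k+1,p}$ (keeping $u\ge u_0/2>0$) and $f_\de\to f$ in $W^{k,q}$ with $\int_M f_\de\,\dvol=0$, and solve the smooth problem normalised by $\int_M v_\de\,\dvol=\int_M v\,\dvol$. The $v_\de$ are smooth, so the a priori estimate applies to each of them, and its constant is uniform in $\de$ because the quantities it depends on are uniformly bounded along the mollification. Hence $(v_\de)$ is bounded in $W^{k+2,q}$, a subsequence converges weakly in $W^{k+2,q}$, the limit solves the original equation with the prescribed mean, and by uniqueness up to constants it equals $v$; this gives $v\in W^{k+2,q}$ and the asserted estimate. (Alternatively, the first step $v\in W^{1,q}\Rightarrow v\in W^{2,q}$ can be done directly by difference quotients.)

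The main obstacle is the commutator term $(\partial A)(\partial v)$, and more generally the bookkeeping of products of derivatives that arise on differentiating an equation whose coefficients are only H\"older (not Lipschitz): estimated crudely such a product would cost a full extra derivative of $v$ that is not yet available, and it is precisely the Peter--Paul structure of \eqref{eq:ellRegAssumption}, with its absorbable $\eps\Norm{v}_{W^{k+2,q}}$ term, that circumvents this. The remaining ingredients --- the $k=0$ elliptic theory, the product estimates of the appendix, the interpolation inequality, and the mollification/uniqueness step --- are standard.
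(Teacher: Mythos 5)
Your proposal is correct and follows essentially the same route as the paper's proof: localize with a partition of unity, treat the operator as a perturbation of a standard second-order model (you outsource the $k=0$ step to Calder\'on--Zygmund/VMO theory for non-divergence operators, where the paper freezes the coefficients against the flat Laplacian on small balls by hand), use hypothesis \eqref{eq:ellRegAssumption} with a small-$\epsilon$ absorption to control the $\p\rho\,\p v$ term, and remove the provisional regularity by approximating $\rho$ and $f$ by smooth data, solving the regularized problems, and identifying the weak limit with $v$ by uniqueness. Your reduction of $k\ge 1$ to $k=0$ by differentiating the equation is the same induction the paper invokes, and your observation about the compatibility condition $\int_M f\,\dvol=0$ and uniqueness up to constants only makes the approximation step more careful than the paper's.
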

\begin{proof}
  We only proof the case $k=0$, the general case follows by induction over
  $k$. Choose coordinate charts for $M$ and a subordinate partition of unity
  of $M$. Let $\psi \in C^\infty_0(M)$ be a cutoff function. Then we have
  $$
  \int_M \frac{1}{u}d (\psi v) \wedge *^\rho d \phi = \int_M f' \phi \dvol
  $$
  for all $\phi \in C^1(M)$ and
  $$
  f' := \psi f - \left(d^{*^\rho} \frac{d}{u} \psi \right) v + *^\rho \left(dv
  \wedge *^\rho \frac{d}{u}\psi\right).
  $$
  Let $B \subset \R^4$ be a ball around zero. Let $\triangle$ be the Hodge
  laplacian on $\R^4$ with respect to the standart metric. We know from
  ellpitic regularity if $v \in W^{1,p}_0(B)$ is a weak solution to the
  equation
  $$
  \triangle v = f
  $$
  in the sense that for every $\phi \in C^1_0(B)$ we have
  $$
  \int_{B} dv \wedge * d \phi = \int_{B} f \phi \dvol
  $$
  for an $f \in L^{q}(B)$ and $q>1$, then $v \in W^{2,q}(B)$ and there exists
  a constant $c_1(q, B)$ such that
  $$
  \norm{v}_{W^{2,q}(B)} \leq c_1 \left(
  \norm{f}_{L^{q}(B)} + \norm{v}_{L^{q}(B)}\right).
  $$
  We choose a coordinate chart such that the image of the support of $\psi$ is
  contained in $B$. We can assume that the push forward of $\rho$ under this
  coordinate chart equals the standart symplectic structure at $0 \in B$. If
  not we can always achieve this by a change of coordinates. Let us denote the
  pushforward of $\rho$ under this coordinate by $\rho_\alpha$. Further we denote
  by $\triangle^{\rho_\alpha}$ the operator $d^{*^\rho}\frac{d}{u}$ expressed
  in this chart, by $v_\alpha$ the function $\psi v$ expressed in this chart
  and by $f'_\alpha$ the function $f'$ expressed in this chart. By
  estimate~\eqref{eq:ellRegAssumption} there exists a polynomial with positive
  coefficents $\fp$ such that
  \begin{equation}\label{eq:est1ellReg}
    \begin{aligned}
      &\norm{\left(\triangle^{\rho_\alpha} - \triangle\right) v_\alpha}_{L^{q}}\\
      &\quad\leq
       \fp(\norm{\rho_\alpha}_{L^\infty})\left(\norm{\rho_\alpha - \rho(0)}_{L^\infty}
       \norm{v_\alpha}_{W^{2,q}} + \norm{\p \rho_\alpha \p
       v_\alpha}_{L^{q}}\right)\\
      &\quad\leq
       \fp(\norm{\rho_\alpha}_{L^\infty})\bigg(\norm{\rho_\alpha - \rho(0)}_{L^\infty}
       \norm{v_\alpha}_{W^{2,q}}\\
       &\qquad\qquad +
      \norm{\rho_\alpha}_{W^{1,p}} \left(\frac{1}{\epsilon}\norm{ v_\alpha}_{L^q} +
      \epsilon\norm{v_\alpha}_{W^{2,q}}\right)\bigg)
    \end{aligned}
  \end{equation}
  Further, by interpolating $\norm{\p v_\alpha}_{L^q} \leq c
  \norm{v_\alpha}^{\frac12}_{L^q} \norm{v_\alpha}^{\frac12}_{W^{2,q}}$ with the
  Galgliardo-Nirenberg interpolation inequality~\ref{prop:gagliardo} we find
  \begin{equation}\label{eq:est2ellReg}
    \norm{f'_\alpha}_{L^q} \leq c_2 \norm{f}_{L^q} + c_3
    \left(\frac{1}{\epsilon}\norm{v_\alpha}_{L^q} + \epsilon
    \norm{v_\alpha}_{W^{2,q}}\right).
  \end{equation}
  If $v_\alpha$ is a smooth solution to
  $$
  \triangle^{\rho_\alpha}v_\alpha = f'_\alpha
  $$
  then it also solves
  $$
  \triangle v_\alpha = f'_\alpha + \left(\triangle -
  \triangle^{\rho_\alpha}\right) v_\alpha.
  $$
  By choosing $\epsilon$ and the ball $B$ small we see that there exits a
  constant constant $c_4 = c_4(q,\norm{\rho}_{W^{1,p}})$ such that
  \begin{equation}\label{eq:ellipticEstimate}
    \norm{v}_{W^{2,q}} \leq c_4 \left(\norm{f}_{L^q} + \norm{v}_{L^q} \right).
  \end{equation}
  Here we are using that $ W^{1,p}(M) \hookrightarrow C^0(M) $ for $p>4$ and
  therefore $\norm{\rho_\alpha - \rho_0}_{L^\infty(B)}$ is small for a small
  ball. If $v \in W^{1,q}(M)$ is a weak solution to $d^{*^\rho}\frac{d}{u} v =
  f$ in the sense that
  $$
  \int_M \frac{1}{u}d v \wedge *^\rho d \phi = \int_M f \phi \dvol
  $$
  for all $\phi \in C^1(M)$ then we approximate $f$ and $\rho$ by smooth
  functions $f_k$ respectively smooth nondegenerate two-forms $\rho_k$, such
  that
  $$
  \lim_{k\to \infty}\norm{f_k - f}_{L^q} = 0, \qquad \lim_{k\to
  \infty}\norm{\rho - \rho_k}_{W^{1,p}} =0.
  $$
  For each pair $(f_k, \rho_k)$ we find by standart $L^2$-theory for elliptic
  operators a smooth function $v_k$ that solves
  $$
  d^{*^{\rho_k}}\frac{d}{u_k}v_k = f_k.
  $$
  The constant $c_4$ in~\eqref{eq:ellipticEstimate} can be choosen uniform in
  $k$ for big $k$ and it follows that $\{v_k\}_{k\in \N}$ has a weakly
  convergent subsequence with limit $\bar{v} \in W^{2,q}$ that satisfies the
  estimate~\eqref{eq:ellipticEstimate} and $d^{*^\rho}\frac{d}{u} \bar{v} =
  f$. Hence
  $$
  \int_M \frac{1}{u} d (v - \bar{v}) \wedge *^\rho d\phi= 0
  $$
  for all $\phi \in C^1(M)$. By choosing a sequnce $\phi_k \in C^1(M)$ such
  that $\phi_k$ converges to $(v - \bar{v})$ in $W^{1,2}(M)$ we
  see that $\bar{v} = v$. This proves the lemma.
\end{proof}
\begin{proof}[Proof of Theorem~\ref{thm:criticalPoints}]
  If $\rho$ is a critical point of the Donaldson flow then it follows
  from Theorem~\ref{thm:evolutionK} (iii) that for a local standard frame
  $\om_1, \om_2,\om_3$ and
  $$
  K_i = \frac{\rho \wedge \om_i} {\dvol_\rho}
  $$
  we have the equation
  $$
  \frac{1}{u}d^{*^\rho} d K_i = -2\{K_j,K_k\}_\rho - \rho\left( X_{K_i},
  \sum_\ell J_\ell X_{K_\ell}\right) + \frac{\left(\frac{1}{u} E_\om^\rho -
  {E'_\om}^\rho\right) \wedge \om_i}{2\dvol}
  $$
  and cyclic permutations of $i,j,k$. The right hand side of this equation
  consists of products of derivatives of the $K_i$ functions times a
  polynomial in the $\rho$ and $\frac{1}{u}$ variables plus lower order terms
  in the $K_i$ functions times another polynomial of the same form. Thus,
  schematically we may write
  \begin{equation}\label{eq:proofCritPoint1}
    d^{*^\rho} d K = P_1(\frac{1}{u},\rho) \p K\cdot \p K +
    P_2(\frac{1}{u},\rho) \p K
  \end{equation}
  Since $1-\frac{4}{p}>0$, $\rho \in C^0$ and since we assume that $\rho $ is
  a symplectic structure we have $\sup_M \frac{1}{u} < \infty$. It follows
  that the $L^\infty$-norms of $P_1,P_2$ are bounded. Using H\"older's
  inequality we see that the right hand side is element of $L^{\frac{p}{2}}$.
  For two functions $v,w \in C^\infty(M)$ we have the estimate
  \begin{equation}\label{eq:ellRegAssumptionCritPoint}
    \norm{\p v \p w}_{L^{\frac{p}{2}}} \leq \norm{\p v}_{L^p} \norm{\p
    w}_{L^{p}} \leq \norm{v}_{W^{1,p}} \left( \frac{1}{\epsilon} \norm{v}_{L^p} +
    \epsilon \norm{w}_{W^{2,p}}\right)
  \end{equation}
  where we used H\"olders inequality in the first inequality and the
  Gagliardo-Nirenberg interpolation inequality~\ref{prop:gagliardo} in the
  second.  It then follows from Lemma~\ref{lem:ellipticRegularity} that $K$ is
  element of $W^{2,\frac{p}{2}}$. By Rellich's embedding theorem
  $W^{1,\frac{p}{2}} \hookrightarrow L^{p'}$ where
  $$
  p' = \frac{4p}{8 - p}
  $$
  For $4 < p < 8$ we have
  $$
  p' > p.
  $$
  Thus $K \in W^{1,p'}$ and by Lemma~\ref{lem:regKtoRho} $\rho \in W^{1,p'}$.
  If we repeat this argument with $p$ replaced by $p'$, we find that $K \in
  W^{1,\frac{p''}{2}}$, $p'' > p'$ and
  $$
  p'' - p' > p' - p.
  $$
  Hence, eventually we find that $K\in W^{2, q}$ and $\rho \in W^{1,q}$ for
  all $q \geq p$. Now we can use Theorem~\ref{thm:K} to see that $\rho \in
  W^{2,q}$ as well. This implies that the right hand side of
  equation~\eqref{eq:proofCritPoint1} is element of $W^{1,q}$ and elliptic
  regularity gives us that $K \in W^{3,q}$.  Now an
  obvious iteration of these arguments using elliptic regularity and
  Theorem~\ref{thm:K}, to deduce the regularity of $\rho$ from the regularity
  of $K$, proves the theorem.
\end{proof}

Let $T>0$ and
$$
I:= [0,T) \subset \R.
  $$
  For $p>4$, and integers $k\geq 2$, $0 \leq r \leq \lfloor\frac{k}{2}\rfloor$ define the
  Sobolev space $W^{r,k,p}$ of functions on $I\times M$ such that the weak
  derivatives $\p^s_t \p^\ell$ exist and are bounded in the $L^p - L^2$-norm for
  $2s + \ell \leq k$, $s \leq r$,
  \begin{multline*}
    W^{r,k,p}(M_I):= \\
    L^2\left(I, W^{k,p}(M)\right) \cap W^{1,2}\left(I,
    W^{k-2,p}(M)\right) \cap \cdots \cap W^{r,2}\left(I, W^{k-2r, p}(M)\right).
  \end{multline*}
  This definition extends in an obvious way to functions from $I$ to the space
  of sections of a vector bundle over $M$. In the case at hand the relavant
  vector bundle is the bundle of two-forms over $M$ and the corresponding space
  will be denoted by
  $$
  W^{r,k,p}(I, \Lambda^2).
  $$
  If the vector bundle in question is clear, we will simply write $W^{r,k,p}$.
  The norm on this space is given by
  \begin{equation}\label{def:norm}
    \norm{u}_{W^{r, k,p}} := \sum_{\substack{2s + \ell \leq
    k \\ s \leq r}} \left(\int_M \left(\norm{\p_t^s
    u}_{W^{\ell,p}}\right)^2 dt\right)^\frac12.
  \end{equation}
  \begin{remark}[{\bf Besov Spaces}]
    \begin{enumerate}[1)]
      \item
        The restriction map $\rho \mapsto \rho(t=0,\cdot)$ extends to a bounded
        linear operator
        $$
        W^{r,k,p}(M_I, \Lambda^2) \to B^{k-1,p}_2(M, \Lambda^2),
        $$
        where $B^{k-1,p}_2(M, \Lambda^2)$ denotes the Besov space (see
        \cite{Grigoryan}) with exponents $p$ and $q = 2$.
      \item
        This restriction map is surjective and it has a bounded right inverse.
      \item
        The identity operator on real smooth functions on $I$ to sections in the
        vector bundle $\Lambda^2$ over $M$ with compact support extends to a
        bounded linear operator
        $$
        W^{r,k,p}(M_I, \Lambda^2) \to C^0(I, B^{k-1, p}_2(M, \Lambda^2))
        $$
      \item
        It holds
        $$
        W^{k,2}(M, \Lambda^2) = B^{k,2}_2(M, \Lambda^2).
        $$
    \end{enumerate}
  \end{remark}
  \begin{theorem}[{\bf Flow Lines}]\label{thm:regularity}
    Let $\rho \in W^{1,2}(I, L^p) \cap L^2(I, W^{2,p})$ be a solution to the
    Donaldson flow for $p > 4$ with initial condition $\rho(t=0,\cdot) =
    \rho_0$. For every integer $k\geq 1$ and all $4 < p' < p$ the following are
    equivalent:
    \begin{enumerate}[1)]
      \item
        $\rho_0 \in B^{k,p'}_2(M, \Lambda^2)$.
      \item
        $\rho \in W^{\frac{k+1}{2}, k+1, p'}(M, \Lambda^2)$.
    \end{enumerate}
  \end{theorem}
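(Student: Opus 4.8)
The plan is to prove $(2)\Rightarrow(1)$ directly and $(1)\Rightarrow(2)$ by induction on $k$, bootstrapping with the reduced evolution equation of Theorem~\ref{thm:evolutionK}(iii) in parabolic Sobolev spaces, in the same spirit as the proof of Theorem~\ref{thm:criticalPoints}. The implication $(2)\Rightarrow(1)$ is immediate: by the restriction theorem for parabolic Sobolev spaces recorded in the Remark on Besov spaces, the trace at $t=0$ maps $W^{\frac{k+1}{2},k+1,p'}(M_I,\Lambda^2)$ into $B^{k,p'}_2(M,\Lambda^2)$. For $(1)\Rightarrow(2)$ the base case $k=1$ is trivial: $(2)$ then reads $\rho\in W^{1,2,p'}=L^2(I,W^{2,p'})\cap W^{1,2}(I,L^{p'})$, which is implied by the standing hypothesis $\rho\in L^2(I,W^{2,p})\cap W^{1,2}(I,L^p)$ via the inclusions $W^{2,p}(M)\hookrightarrow W^{2,p'}(M)$ and $L^p(M)\hookrightarrow L^{p'}(M)$ for $p>p'$; by the trace theorem $(1)$ then also holds.

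Now assume the equivalence for $k$ and let $\rho_0\in B^{k+1,p'}_2$. Since $B^{k+1,p'}_2\hookrightarrow B^{k,p'}_2$, the inductive hypothesis gives $\rho\in W^{\frac{k+1}{2},k+1,p'}(M_I,\Lambda^2)$, and we must gain two spatial (one temporal) derivative. Because $p'>4$, $W^{k+1,p'}(M)\hookrightarrow W^{k,\infty}(M)$ and $W^{k+1,p'}(M)$ is a Banach algebra stable under composition with smooth functions; the corresponding statements for the parabolic spaces $W^{r,k+1,p'}(M_I)$ follow from the product and composition estimates of the appendix. Since $\rho$ solves the flow, $\rho\wedge\rho>0$ and $C:=\sup_{I\times M}\tfrac1u<\infty$, so $\tfrac1u$, $K:=\tfrac{2\rho^+}{u}$, $\theta^\rho$ and the coefficients of $*^\rho$, $R^\rho$, $g^\rho$ — all smooth functions of $\rho$ with no derivatives — belong to $W^{\frac{k+1}{2},k+1,p'}(M_I)$, hence in particular to $W^{\frac{k}{2},k,\infty}(M_I)$. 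In a local standard frame $\om_1,\om_2,\om_3$, Theorem~\ref{thm:evolutionK}(iii) together with Lemma~\ref{lem:Srho} shows that $K_i=\tfrac{\rho\wedge\om_i}{\dvol_\rho}$ solves
\[
  \p_t K_i+\tfrac1u\,d^{*^\rho}dK_i=f_i ,
\]
where, inspecting \eqref{eq:localEvoEq}--\eqref{def:E}, $f_i$ is a finite sum of terms consisting of a smooth function of $\tfrac1u$ and $\rho$ times one of $\p K\cdot\p K$, $\p K$, $K\cdot\p K$ or $K\cdot\p\rho$ (derivatives of the frame being smooth and harmless). Since $\p K,\p\rho\in W^{\frac{k}{2},k,p'}(M_I)$ and $W^{\frac{k}{2},k,p'}(M_I)$ is a module over $W^{\frac{k+1}{2},k+1,p'}(M_I)$, the parabolic product estimates yield $f_i\in W^{\frac{k}{2},k,p'}(M_I)$.

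The operator $\p_t+\tfrac1u d^{*^\rho}d$ is uniformly parabolic with coefficients in $W^{\frac{k}{2},k,\infty}(M_I)$, and $K_i(0,\cdot)\in B^{k+1,p'}_2$ since it is a smooth function of $\rho_0\in B^{k+1,p'}_2$; hence parabolic $L^{p'}$-regularity — the space--time analogue of Lemma~\ref{lem:ellipticRegularity}, obtained by localization, freezing of coefficients and the known estimates for the constant-coefficient heat operator — gives $K_i\in W^{\frac{k+2}{2},k+2,p'}(M_I)$, so $K\in W^{\frac{k+2}{2},k+2,p'}(M_I,\Lambda^+)$. To transfer this to $\rho$, apply Theorem~\ref{thm:K}(iii) at almost every $t$ with $m=k+1$, $q=p'$ and $C=\sup\tfrac1u$: the resulting estimate bounds $\norm{\rho(t)}_{W^{k+2,p'}}$ by $\norm{K(t)}_{W^{k+2,p'}}$ plus a factor, bounded uniformly in $t$ because $\rho\in C^0(I,B^{k,p'}_2)$ and $K\in C^0(I,B^{k+1,p'}_2)$, times $\norm{\rho(t)}_{W^{k+1,p'}}$; squaring and integrating over the finite interval $I$, and using $\rho\in L^2(I,W^{k+1,p'})$ from the inductive hypothesis, gives $\rho\in L^2(I,W^{k+2,p'})$. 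For the time derivatives one differentiates the flow equation $\p_t\rho=d*^\rho d\theta^\rho$: each $\p_t$ is traded for two spatial derivatives of $\rho$ and $K$ modulo lower-order products, so $\p_t^s\rho$ is a universal polynomial expression in the spatial derivatives of $\rho$ and $K$ of order $\le k+2$ with smooth coefficient functions, whence $\p_t^s\rho\in L^2(I,W^{k+2-2s,p'})$ for $0\le s\le\lfloor\tfrac{k+2}{2}\rfloor$ by the parabolic product estimates. Thus $\rho\in W^{\frac{k+2}{2},k+2,p'}(M_I,\Lambda^2)$, and the induction is complete.

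The main obstacle is the transfer of the gained regularity from $K$ back to $\rho$: Theorem~\ref{thm:K}(iii) is purely elliptic, at frozen time, so the time derivatives of $\rho$ must be manufactured from the flow equation, and one must check that replacing each $\p_t$ by two spatial derivatives is always balanced, i.e. never calls for more regularity of $\rho$ or $K$ than the current induction level supplies. A secondary technical point is the parabolic counterpart of Lemma~\ref{lem:ellipticRegularity} with merely $W^{\frac{k}{2},k,\infty}$ coefficients and Besov initial data $B^{k+1,p'}_2$, which is classical provided one keeps careful track of the anisotropic space--time norms, for which the product estimates of the appendix are used throughout.
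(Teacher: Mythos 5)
Your overall architecture (trace theorem for $(2)\Rightarrow(1)$, induction on $k$ driven by the evolution equation for $K=\frac{2\rho^+}{u}$, elliptic/parabolic regularity for $\partial_t+\frac1u d^{*^\rho}d$, and Theorem~\ref{thm:K}(iii) plus the flow equation to transfer regularity back to $\rho$) is the same as the paper's. The genuine gap is at the very first inductive step, $k=1\to k=2$. There your claim ``since $\partial K,\partial\rho\in W^{\frac k2,k,p'}(M_I)$ and $W^{\frac k2,k,p'}$ is a module over $W^{\frac{k+1}{2},k+1,p'}$, the product estimates give $f_i\in W^{\frac k2,k,p'}$'' does not apply: the dangerous term $\partial K\cdot\partial K$ (and $\partial\rho\cdot\partial K$) is a product of two elements of the \emph{lower} space, neither of which lies in $W^{\frac{k+1}{2},k+1,p'}$, so no module structure helps; and Corollary~\ref{cor:parabolicProduct} is only valid for $k\geq2$, $s\geq1$. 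Concretely, with only $K,\rho\in W^{1,2,p'}=L^2(I,W^{2,p'})\cap W^{1,2}(I,L^{p'})$ one has $\|\partial K(t)\cdot\partial K(t)\|_{W^{1,p'}}\lesssim\|K(t)\|_{W^{2,p'}}^2$, which is merely in $L^1(I)$, and by H\"older the quadratic term is only in $L^2(I,L^{p'/2})$, not in $L^2(I,W^{1,p'})$ as your argument needs. So the parabolic estimate you invoke to jump from $W^{1,2,p'}$ to $W^{1,3,p'}$ never gets a right-hand side of the required class, and the induction does not start.

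This is exactly the point where the paper spends the first half of its proof: it first bootstraps the \emph{integrability exponent} rather than the number of derivatives. One solves the equation for $\widehat K_i=K_i-\widetilde K_{i,0}$ with right-hand side in $L^2(I,L^{p/2})$ (Lemmas~\ref{lem:ellipticRegularity} and~\ref{lem:maximalRegularity}), uses the embedding $W^{1,p/2}\hookrightarrow L^{p'}$ with $p'=\frac{4p}{8-p}>p$ for $4<p<8$, transfers back to $\rho$ with Lemma~\ref{lem:regKtoRho}, and iterates until $K,\rho\in C^0(I,W^{1,q})$ for all $q$; only then is the quadratic term in $L^2(I,W^{1,p'''})$ and only then do the derivative-gaining implications (your inductive step, run in the paper for $k\geq3$ with Corollary~\ref{cor:parabolicProduct}) become available. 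Your proof needs this preliminary exponent bootstrap, or some substitute interpolation argument, inserted before the induction; as written the step from the standing hypothesis to $\rho\in W^{1,3,p'}$ is unjustified. A secondary, fixable weak point: you take $K_i(0,\cdot)\in B^{k+1,p'}_2$ by composing $\rho_0$ with a smooth function and feed nonzero Besov initial data into an unproved parabolic regularity statement, whereas the paper sidesteps both issues by extending $\rho_0$ to $\widetilde\rho_0$ via the right inverse of the trace and subtracting, so that the maximal regularity lemma is only ever used with zero initial condition.
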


  The proof of this theorem uses parabolic regularity theory. In particular we
  use the `maximal regularity' property of parabolic operators in divergence
  form. We refer to \cite{Lamberton} for these results. The maximal regularity
  property is usually formulated for operators with time independent smooth
  coefficients for operators in divergence form, the Hodge laplacian being the
  archetypal example.  The next lemma assures that the operator
  $$
  d^{*^\rho}\frac{d}{u} : C^\infty(M) \to C^\infty(M)
  $$
  has the maximal
  regularity property as well, even though it's coefficients depend on time and
  are non-smooth in our use case.
  \begin{lemma}[{\bf Maximal Regularity}] \label{lem:maximalRegularity}
    Let $p> 4$, $q > 1$. Let
    $$
    \rho \in W^{1,2}(\R, L^p(M, \Lambda^2)) \cap L^2(\R,
    W^{2,p}(M, \Lambda^2))
    $$ be a path of nondegenerate forms. Assume that there exists a constant
    $c_0 > 0$ such that for all $v,w \in C^\infty(M)$ and all $\epsilon >0$ we
    can estimate
    \begin{equation}\label{eq:maxRegAssumption}
      \norm{\p v \p w}_{L^q} \leq c_0 \norm{v}_{W^{1,p}} \left(\epsilon
      \norm{w}_{W^{2,q}} + \frac{1}{\epsilon} \norm{w}_{L^q}\right).
    \end{equation}
    Then for all $v\in
    C_0^\infty(\R, C_0^\infty(M))$ there exists a
    constant $c(q,\norm{\rho}_{L^\infty(\R, W^{1,p})})>0$ such that
    $$
    \norm{\p_t v}_{L^2(\R, L^q)}\leq c \norm{\p_t v + d^{*^\rho}
    \frac{d}{u}v}_{L^2(\R, L^{q})} + \norm{v}_{L^2(\R, L^q)}.
    $$
  \end{lemma}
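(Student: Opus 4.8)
The plan is to view $\partial_t+d^{*^\rho}\tfrac{d}{u}$ as a small perturbation of a model constant‑coefficient parabolic operator --- for which maximal $L^2$‑in‑time, $L^q$‑in‑space regularity is classical --- and then to patch local estimates into a global one by a partition of unity; the structural hypothesis~\eqref{eq:maxRegAssumption} is exactly the ingredient needed to absorb the error terms that carry a derivative of $\rho$.

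First I would record the divergence form of the spatial operator. Because $\dvol_{g^\rho}=\dvol$, in any coordinate chart
$$
d^{*^\rho}\tfrac{d}{u}v=-\tfrac1\mu\,\partial_i\big(\mu\,a^{ij}\,\partial_j v\big),\qquad a^{ij}:=\tfrac1u\,(g^\rho)^{ij},
$$
where $\mu$ is the smooth, $t$‑independent volume density of the background metric, so that all of the non‑smoothness and of the $t$‑dependence sits in the coefficient matrix $a=a(t,x)$. Since $p>4$ gives $W^{1,p}(M)\hookrightarrow C^{0,1-4/p}(M)$, the matrix $a(t,\cdot)$ is, uniformly in $t$, Hölder continuous and --- granting the uniform lower bound on $u$ implicit in ``path of nondegenerate forms'' --- uniformly elliptic, with Hölder modulus and ellipticity constants controlled by $\norm{\rho}_{L^\infty(\R,W^{1,p})}$; moreover $\partial_i a^{ij}$ is, uniformly in $t$, bounded in $L^p(M)$ by $C\,\norm{\rho}_{W^{1,p}}$. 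Thus $A(t)v:=d^{*^{\rho(t)}}\tfrac{d}{u(t)}v$ is a uniformly elliptic operator in divergence form whose coefficient matrix is continuous in $x$ uniformly in $t$, which is the structural input for the freezing argument.

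Next I would invoke the model estimate: for $1<q<\infty$ and any constant positive‑definite matrix $a_0$,
$$
\norm{\partial_t w}_{L^2(\R,L^q)}+\norm{\nabla^2 w}_{L^2(\R,L^q)}\le c(q,a_0)\,\norm{\partial_t w-a_0^{ij}\partial_i\partial_j w}_{L^2(\R,L^q)}
$$
for all $w\in C_0^\infty(\R\times\R^4)$ --- the maximal‑regularity property of a constant‑coefficient parabolic operator, see~\cite{Lamberton} --- and the same estimate, with a constant depending only on the ellipticity bounds, holds for the operator $\partial_t-a^{ij}(t,x_0)\partial_i\partial_j$ obtained by freezing $a$ at a fixed point $x_0$ while keeping its (bounded, measurable) $t$‑dependence. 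Then I would localise: fix a finite atlas with a subordinate partition of unity $\{\chi_\alpha\}$, refined so that each $\supp\chi_\alpha$ lies in a coordinate ball $B_\alpha$ of radius $r$ with $\sup_t\norm{a(t,\cdot)-a(t,x_\alpha)}_{L^\infty(B_\alpha)}\le\eta(r)$, $\eta(r)\to0$. For $v\in C_0^\infty(\R,C^\infty(M))$ put $v_\alpha:=\chi_\alpha v$; since $\chi_\alpha$ does not depend on $t$,
\begin{multline*}
\partial_t v_\alpha-a^{ij}(t,x_\alpha)\,\partial_i\partial_j v_\alpha
=\chi_\alpha\big(\partial_t v+d^{*^\rho}\tfrac{d}{u}v\big)+\big[d^{*^\rho}\tfrac{d}{u},\chi_\alpha\big]v\\
+\big(a^{ij}(t,x)-a^{ij}(t,x_\alpha)\big)\partial_i\partial_j v_\alpha+b^j(t,x)\,\partial_j v_\alpha,
\end{multline*}
where $b^j=\tfrac1\mu\partial_i(\mu\,a^{ij})$ and the commutator $[d^{*^\rho}\tfrac{d}{u},\chi_\alpha]v$ is first order in $v$. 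Applying the frozen‑operator estimate to $v_\alpha$ and estimating the error terms: the jump term is $\le\eta(r)\,\norm{v_\alpha}_{L^2(\R,W^{2,q})}$; every term in which a derivative falls on $\rho$ --- i.e.\ a term of the schematic form $(\partial\rho)\,\partial v_\alpha$, occurring both in $b^j\partial_j v_\alpha$ and in the commutator --- is, by hypothesis~\eqref{eq:maxRegAssumption}, bounded by $c_0\,\norm{\rho}_{L^\infty(\R,W^{1,p})}\big(\epsilon\,\norm{v_\alpha}_{L^2(\R,W^{2,q})}+\tfrac1\epsilon\,\norm{v_\alpha}_{L^2(\R,L^q)}\big)$; and the remaining first‑ and zeroth‑order error terms, whose coefficients are bounded or lie in $L^p(M)$, are handled by Hölder's inequality and the Sobolev embeddings available for $p>4$, at worst contributing $\epsilon\,\norm{v_\alpha}_{L^2(\R,W^{2,q})}+C_\epsilon\,\norm{v}_{L^2(\R,L^q)}$. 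Choosing $r$ and $\epsilon$ small so that the $\norm{v_\alpha}_{L^2(\R,W^{2,q})}$ contributions are absorbed into the left side, and summing over the finitely many $\alpha$, one obtains
$$
\norm{\partial_t v}_{L^2(\R,L^q)}+\norm{v}_{L^2(\R,W^{2,q})}\le c\,\norm{\partial_t v+d^{*^\rho}\tfrac{d}{u}v}_{L^2(\R,L^q)}+c\,\norm{v}_{L^2(\R,L^q)},
$$
which gives the assertion.

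The main obstacle I expect is the bookkeeping in this perturbation step: one must verify that \emph{every} term produced by freezing $a$ at $x_\alpha$ and by commuting the operator with $\chi_\alpha$ is either a small ($\eta(r)$‑ or $\epsilon$‑) multiple of the top‑order quantity $\norm{v_\alpha}_{L^2(\R,W^{2,q})}$ --- which is possible precisely because the density $\mu$ is smooth and $\rho$ is uniformly $C^0$, so that, once the frozen operator is subtracted, no full‑size undifferentiated coefficient multiplies $\nabla^2 v$ --- or else a term carrying exactly one derivative of $\rho$, for which~\eqref{eq:maxRegAssumption} (together with routine Hölder and Gagliardo--Nirenberg estimates for the pieces containing $v$ rather than $\partial v$) makes the absorption possible, so that in the end only $\norm{v}_{L^2(\R,L^q)}$ survives on the right. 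A secondary delicate point is the legitimacy of quoting the model maximal‑regularity estimate for the spatially‑frozen but still time‑dependent operator $\partial_t-a^{ij}(t,x_\alpha)\partial_i\partial_j$; this relies on the parabolic theory invoked from~\cite{Lamberton} for operators in divergence form accommodating coefficients that are merely bounded and measurable in time.
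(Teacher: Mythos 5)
Your argument follows essentially the same route as the paper's proof: localize with a partition of unity, compare $d^{*^\rho}\frac{d}{u}$ in a chart with a frozen-coefficient model operator whose maximal $L^2$-in-time, $L^q$-in-space regularity is known, and absorb the error terms --- the small coefficient-jump term and the terms of the schematic form $(\p\rho)\,\p v_\alpha$ --- using exactly the hypothesis~\eqref{eq:maxRegAssumption} together with Gagliardo--Nirenberg for the cutoff commutators; this is precisely how the paper combines the model estimate with~\eqref{eq:est1ellReg}. The one substantive point where you deviate is the freezing itself: you freeze only in space and invoke maximal regularity for $\p_t - a^{ij}(t,x_\alpha)\p_i\p_j$ with coefficients merely bounded and measurable in $t$, attributing this to~\cite{Lamberton}. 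That reference concerns autonomous generators of contraction semigroups and does not cover nonautonomous, measurable-in-time coefficients; the statement you need is true (it belongs to the mixed-norm theory for parabolic equations with coefficients measurable in time), but it is also unnecessary here. Since $\rho \in W^{1,2}(\R, L^p)\cap L^2(\R, W^{2,p}) \hookrightarrow C^0(\R, W^{1,p}) \hookrightarrow C^0(\R\times M)$ for $p>4$, you may shrink the supports of the cutoffs in time as well and freeze the coefficients at a space--time point (after a linear change of coordinates making the frozen operator the standard Laplacian), which is what the paper does; then only the classical constant-coefficient maximal regularity estimate is needed, and the rest of your bookkeeping goes through verbatim. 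With that adjustment your proof is correct and matches the paper's.
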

  \begin{proof}
    Choose coordinate charts for $\R \times M$ and a subordinate partition of
    unity. Let $\psi \in C^\infty(\R \times M)$ be a cutoff function. Let
    $I\times B\in \R \times \R^4$ be a ball around $(0,0)$. Choose a
    coordinate chart such that the support of $\psi$ is mapped into $I\times
    B$ and such that the pushforward of $\rho$ under this coordinate chart at
    $(0,0)$ equals the standart symplectic structure on $\R^4$. We can always
    achieve this by a change of coordinates. Let us denote the pushforward of
    $\rho$ under this coordinate chart by $\rho_\alpha$ and the operator
    $d^{*^\rho}\frac{d}{u}$ expresed in this chart by $\triangle^{\rho_\alpha}$. Further
    we denote by $v_\alpha$ the function $\psi v$ expressed in this chart.
    From maximal regularity for the standart Laplace operator
    $\triangle$ on $\R \times \R^4$ there exists a constant $c_1 = c_1(q)>0$
    such that \begin{align*}
      \norm{\p_t v_\alpha}_{L^2(\R, L^q)}
      &\leq c_1
      \norm{\p_t v_\alpha + \triangle v_\alpha}_{L^2(\R, L^q)}\\
      &\leq c_1
      \norm{\p_t v_\alpha + \triangle^{\rho_\alpha} v_\alpha}_{L^2(\R, L^q)} +
      \norm{(\triangle - \triangle^{\rho_\alpha})v_\alpha}_{L^2(\R, L^q)}
    \end{align*}
    By choosing the partition of unity such that $\psi$ has small enough
    support and using the estimate~\eqref{eq:est1ellReg} we find that there
    exists a constant
    $$
    c_2 = c_2 (q, \norm{\rho}_{L^\infty(\R, W^{1,p})})
    $$
    such that
    $$
    \norm{\p_t v_\alpha}_{L^2(\R, L^q)} \leq c_2 \norm{\p_t v_\alpha +
      \triangle^{\rho_\alpha}v_\alpha}_{L^2(\R, L^q)} + \norm{v_\alpha}_{L^2(\R,
      L^q)}.
    $$
    From this the global estimate
    $$
    \norm{\p_t v}_{L^2(\R, L^q)} \leq c_3 \norm{\p_t v +
      d^{*^\rho}\frac{d}{u}v}_{L^2(\R, L^2)} + \norm{v}_{L^2(\R, L^q)}
    $$
    folows by choosing a partition of unity such that the previous estimates
    holds for all chart domains and by estimating additional first order terms
    appearing from the multiplication of $v$ with the cutoff functions with
    the Gagliardo-Nirenberg interpolation inequality. This proves the lemma.
  \end{proof}

\begin{proof}[Proof of Theorem \ref{thm:regularity}]
Let $\rho(t=0,\cdot) = \rho_0 \in B^{2,p}_2$ and let
$$
\rhotilde_0 \in W^{1,2}(I, W^{1,p}) \cap L^2(I, W^{3,p})
$$
be an extension of $\rho_0$ with $\rhotilde_0(t=0,\cdot) = \rho_0$.  From
Theorem~\ref{thm:evolutionK} (iii) it follows that the evolution of the
functions $K_i = \frac{\rho\wedge\om_i}{\dvol_\rho}$ in a local standard
frame $\om_1,\om_2,\om_3$ is given by
\begin{equation*}
\p_t K_i  + \frac{1}{u} d^{*^\rho}d K_i\\
=  P_1(\frac{1}{u}, \rho) \p K \cdot\p K +
P_2(\frac{1}{u} ,\rho) \p K,
\end{equation*}
where $K = K_k$ for an arbitrary $k\in \{1,2,3\}$, $\p$ is an arbitrary
space derivative and $P_{1,2}(\frac{1}{u}, \rho)$ are polynomials in the
variables $\frac{1}{u}$ and $\rho$ with coefficients that are smooth
functions on the manifold. Let
$$
\Ktilde_{i,0} := \frac{\om_i \wedge \rhotilde_0}{\dvol_{\rhotilde_0}},
\qquad \Khat_i := K_i - \Ktilde_{i,0}
$$
The evolution of the functions $\Khat_{i}$ is then given by the equations
\begin{equation}\label{eq:reg1}
\begin{split}
\p_t \Khat_i  + d^{*^\rho}\frac{d}{u} \Khat_i
&=
P_3(\frac{1}{u}, \rho) \p \rho \cdot\p K + P_2(\frac{1}{u} ,\rho) \p K\\
&\qquad - (\p_t + \frac{1}{u} d^{*^\rho}d)\Ktilde_{0,i}\\
\Khat_i(t=0,\cdot) &= 0,
\end{split}
\end{equation}
for a polynom $P_3$ with smooth coefficient functions. By assumption $\rho$
is element of $W^{1,2}(I, L^p) \cap L^2(I, W^{1,p})$ and therefore $\rho, K
\in C^0(I, W^{1,p})$. It follows with H\"older's inequality that
$$
\p \rho \cdot \p K \in L^{\frac{q}{2}}(I, L^\frac{p}{2}) \qquad
\forall q \geq 0.
$$
Since $W^{1,p} \subset C^0$ for $p > 4$, we have
$$
\norm{P_{2,3}}_{L^\infty(I, L^\infty)} < \infty.
$$
The last term on the right handside of~\eqref{eq:reg1} is in $L^2(I,
W^{1,p}) \subseteq L^2(I, L^\infty)$, hence the right hand side
of~\eqref{eq:reg1} is element of $L^{2}(I, L^{\frac{p}{2}})$. As in the
critical point case the estimate~\eqref{eq:ellRegAssumptionCritPoint} is valid
for any two functions $v,w\in C^\infty(M)$.  Then by the ellipitic regularity
Lemma \ref{lem:ellipticRegularity} and the maximal regularity
Lemma \ref{lem:maximalRegularity} we have,
$$
\Khat \in W^{1, 2}\left( I, L^{\frac{p}{2}} \right) \cap
L^{2}\left( I, W^{2, \frac{p}{2}} \right).
$$
And by Rellich's theorem,
$$
\Khat \in C^0(I, W^{1, p'}), \qquad p' = \frac{4p}{8-p}.
$$
For $4<p <8$ we have
$$
p' > p.
$$
Since $\Ktilde_0 \in C^0(I, W^{2,p}) \subseteq C^0(I, C^1)$ it follows that
$K \in C^0(I, W^{1,p'})$. Then with Lemma~\ref{lem:regKtoRho} we find
$$
\rho \in C^0(I, W^{1, p'}).
$$
If we now repeat these arguments with $p$ replaced by $p'$ we find that
$\Khat \in C^0(I, W^{1,p''})$ with $p'' > p'$ and
$$
p'' - p' > p' - p.
$$
Hence, we eventually find that
$$
\Khat, K \in C^0(I, W^{1,q})
$$
for all $q \geq 1$.  In particular, the right hand side of \eqref{eq:reg1}
is element of $L^2(I, W^{1,p'''})$ for a $4 < p''' < p$ and
$$
K\in W^{1,2}(I, W^{1,p'''}) \cap L^2(I, W^{3,p'''}).
$$
We claim that the following
implications hold for $k\geq3$, $p>4$,
\begin{gather*}
K \in W^{\lfloor \frac{k}{2} \rfloor, k ,p}, \rho \in W^{\lfloor
\frac{k-1}{2} \rfloor, k-1 ,p} \Rightarrow \rho \in W^{\lfloor
\frac{k}{2} \rfloor, k ,p},\\
\rho, K \in W^{\lfloor \frac{k}{2} \rfloor, k ,p} \Rightarrow
P_1(\frac{1}{u}, \rho) \p K \cdot\p K + P_2(\frac{1}{u} ,\rho) \p K \in
W^{\lfloor \frac{k-1}{2} \rfloor, k-1,
p}\\
P_1(\frac{1}{u}, \rho) \p K \cdot\p K + P_2(\frac{1}{u} ,\rho) \p K \in
W^{\lfloor \frac{k-1}{2} \rfloor, k-1, p},\ \rho \in W^{\lfloor \frac{k}{2}
\rfloor,k,p},\ \rho_0 \in B^{k,p}_2\\
\Rightarrow  K \in W^{\lfloor \frac{k+1}{2} \rfloor, k+1 ,p}.
\end{gather*}
The statement of the theorem then follows by induction over $k\geq 3$ and
$p=p'''$. We prove the first implication. It follows from
Theorem~\ref{thm:K} that $\rho \in L^2(I, W^{k,p})$ under the stated
assertions. Then from the Donaldson flow equation $$ \norm{\p_t
\rho}_{W^{\lfloor \frac{k}{2}\rfloor - 1, k-2,p}} = \norm{d*^\rho
d\theta^\rho}_{W^{{\lfloor \frac{k}{2}\rfloor - 1, k-2,p}}}.  $$ This
expression is bounded because of the product estimates of
Lemma~\ref{lem:sobolevProd} in the case $k=3 $ and
Corollary~\ref{cor:parabolicProduct} for higher $k$, since the expressions
$*^\rho, \theta^\rho$ are just given by polynomials in the variables $\rho,
\frac{1}{u}$. The second implication follows also by these product estimates
in Sobolev spaces. To see the third implication, note that there exists an
extension of $\rho_0$ to an element $\rhotilde_0$ such that $\Ktilde_0 \in
W^{\lfloor \frac{k+1}{2} \rfloor, k+1,p}$. It follows from
equation~\eqref{eq:reg1} and maximal regularity for the operator
$\frac{1}{u}d^{*^\rho}d$ that $\Khat \in W^{\lfloor \frac{k+1}{2} \rfloor,
k+1,p}$ and hence so is $K$. This proves the theorem.
\end{proof}

The following is an immidiate corollary to Theorem~\ref{thm:regularity}.
\begin{corollary}[Instant Regularity]
Let $\rho \in L^2(I, W^{1,p})\cap W^{1,2}(I, L^p)$ be a solution to the
Donaldson flow. For all $4 < p' < p$ the following holds true. If
$\rho(t=0,\cdot) \in B_2^{k,p'}$ then the map $t \mapsto \rho(t,\cdot)$ is a
continous map from (0,T) to $B^{k+1,p'}_2$. In particular, $\rho(t,\cdot)$
is smooth for all $t \in (0,T)$.
\end{corollary}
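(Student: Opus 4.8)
The plan is to combine Theorem~\ref{thm:regularity} with the smoothing effect of the parabolic equation at positive times, and then to let the order of regularity tend to infinity. Fix an integer $k\ge 1$ and $4<p'<p$, and suppose $\rho(t{=}0,\cdot)\in B^{k,p'}_2$. First I would apply Theorem~\ref{thm:regularity} (whose proof in fact only uses $\rho\in L^2(I,W^{1,p})\cap W^{1,2}(I,L^p)$) to obtain $\rho\in W^{\lfloor\frac{k+1}{2}\rfloor,\,k+1,\,p'}(M_I)$. Since $K:=\frac{\rho^+}{u}$ is, fibrewise, a smooth function of $\rho$ on the locus $u>0$, and $u$ is bounded away from zero on compact time--intervals (because $\rho\in C^0(I,B^{k,p'}_2)\subset C^0(I,C^0(M))$ for $k\ge1$, $p'>4$, and $\rho$ is symplectic), the composition and product estimates of Lemma~\ref{lem:sobolevProd} and Corollary~\ref{cor:parabolicProduct} give $K\in W^{\lfloor\frac{k+1}{2}\rfloor,\,k+1,\,p'}(M_I)$ as well. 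By the trace embedding $W^{r,\ell,p'}(M_J)\hookrightarrow C^0(J,B^{\ell-1,p'}_2)$ (the Besov--space remark following the definition of $W^{r,k,p}$) this already yields $\rho\in C^0(I,B^{k,p'}_2)$ --- but that is one derivative short of the claim and valid only up to $t=0$, so the missing derivative must come from the smoothing effect.

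The core is the following \emph{smoothing step}: if $\rho$ solves the Donaldson flow on $J=[a,b]\subset[0,T)$, if $\rho,K\in W^{\lfloor\frac{m+1}{2}\rfloor,\,m+1,\,p'}(M_J)$ for some integer $m\ge1$, and if $\tau\in(a,b]$, then $\rho,K\in W^{\lfloor\frac{m+2}{2}\rfloor,\,m+2,\,p'}(M_{[\tau,b]})$. To prove it I would choose a cut--off $\chi\in C^\infty(\R)$ with $\chi\equiv0$ on $(-\infty,\tfrac{a+\tau}{2}]$ and $\chi\equiv1$ on $[\tau,\infty)$; writing the evolution equation of Theorem~\ref{thm:evolutionK}(iii) in a local standard frame $\om_1,\om_2,\om_3$ in the schematic form
$$
\p_tK_i+\frac1u d^{*^\rho}dK_i=\bigl(P_1(\tfrac1u,\rho)\,\p K\cdot\p K+P_2(\tfrac1u,\rho)\,\p K\bigr)_i ,
$$
the functions $w_i:=\chi K_i$ satisfy the \emph{same} equation with right--hand side $\chi\,\bigl(P_1(\tfrac1u,\rho)\,\p K\cdot\p K+P_2(\tfrac1u,\rho)\,\p K\bigr)_i+\chi'(t)\,K_i$ and \emph{vanishing} initial datum. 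By the Sobolev product estimates (the second implication in the proof of Theorem~\ref{thm:regularity}, with $k$ replaced by $m+1$) this right--hand side lies in $W^{\lfloor\frac m2\rfloor,\,m,\,p'}(M_J)$, and since $w$ has zero initial datum the elliptic and maximal--regularity Lemmas~\ref{lem:ellipticRegularity} and~\ref{lem:maximalRegularity} --- applied exactly as in the proof of Theorem~\ref{thm:regularity} (third implication, with $k$ replaced by $m+1$ and the extension term absent) --- yield $w\in W^{\lfloor\frac{m+2}{2}\rfloor,\,m+2,\,p'}(M_J)$. Restricting to $[\tau,b]$, where $w=K$, gives $K\in W^{\lfloor\frac{m+2}{2}\rfloor,\,m+2,\,p'}(M_{[\tau,b]})$, and then the parabolic version of Theorem~\ref{thm:K}(iii) (the first implication in the proof of Theorem~\ref{thm:regularity}, with $k$ replaced by $m+2$) promotes $\rho$ --- hence also $K$, being a fibrewise--smooth function of $\rho$ --- to the same space.

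Granting the smoothing step, I would conclude by iteration. Starting from $m=k$ and $J=[0,b]$, apply the step along a nested sequence $[a_j,b]$ with $a_0=0<a_1<a_2<\cdots\to a_\infty$, choosing the increments $a_{j+1}-a_j$ so that $a_\infty\le\eps$ for any prescribed $\eps\in(0,b]$; this produces $\rho\in W^{\lfloor\frac{m+1}{2}\rfloor,\,m+1,\,p'}(M_{[\eps,b]})$ for every integer $m\ge k$, hence $\rho\in C^0([\eps,b],B^{m,p'}_2)$ by the trace embedding. Since $\eps\in(0,b]$ and $b<T$ are arbitrary, $t\mapsto\rho(t,\cdot)$ is continuous from $(0,T)$ into $B^{m,p'}_2$ for every integer $m\ge k$; for $m=k+1$ this is the first assertion. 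Finally, since on the closed four--manifold $M$ one has $B^{m,p'}_2\hookrightarrow C^{\ell}(M,\Lambda^2)$ whenever $m-\ell>4/p'$, it follows that $\rho(t,\cdot)\in\bigcap_{m\ge k}B^{m,p'}_2\subset C^\infty(M,\Lambda^2)$ for every $t\in(0,T)$.

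The step I expect to be the main obstacle is the index bookkeeping inside the smoothing step: one must check that the cut--off term $\chi'K$ --- which is only as regular as $K$ itself, i.e.\ in $W^{\lfloor\frac{m+1}{2}\rfloor,m+1,p'}$ --- together with the quadratic nonlinearity $\chi\,(P_1\p K\cdot\p K+P_2\p K)$ really land in the \emph{correct} mixed space $W^{\lfloor m/2\rfloor,m,p'}$, so that parabolic maximal regularity with zero initial data buys back exactly two spatial and one temporal derivative and nets the claimed gain of one full degree. This is where the algebra property of $W^{m,p'}$ (which forces $p'>4$) and the exact admissible temporal order $\lfloor m/2\rfloor$ must be reconciled; beyond that, everything reduces to results already established --- Theorems~\ref{thm:regularity}, \ref{thm:evolutionK}(iii) and~\ref{thm:K}(iii), Lemmas~\ref{lem:maximalRegularity} and~\ref{lem:ellipticRegularity} --- together with standard Besov embeddings.
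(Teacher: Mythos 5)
Your proposal is correct in substance and follows the strategy the paper gestures at when it calls this an ``immediate'' consequence of Theorem~\ref{thm:regularity}: use that theorem once, then convert parabolic smoothing at positive times into one extra derivative and iterate. Where you genuinely differ is in how the gain at positive times is produced. The one-line reading of ``immediate'' would restart the flow at an almost-every time $t_0>0$ with datum $\rho(t_0,\cdot)\in W^{k+1,p'}$ and invoke the theorem again at level $k+1$; but that requires $\rho(t_0,\cdot)$ to lie in a Besov space $B^{k+1,p''}_2$, and the embedding $W^{k+1,p'}\hookrightarrow B^{k+1,p''}_2$ is not available in general (lowering the integrability from $p'$ to $p''$ does not buy the fine index $q=2$), so the restart version needs additional care. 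Your temporal cut-off $w=\chi K$ sidesteps this point entirely: the vanishing initial datum manufactured by $\chi$ lies in every Besov space, the commutator term $\chi' K$ is of lower order, and the inductive implications from the proof of Theorem~\ref{thm:regularity} --- the product estimates, Lemmas~\ref{lem:ellipticRegularity} and~\ref{lem:maximalRegularity}, and Theorem~\ref{thm:K}(iii) to pass from $K$ back to $\rho$ --- apply verbatim with the extension term $\Ktilde_0$ absent, so the interior gain of one degree per step and the exhaustion by nested intervals give exactly the claimed continuity into $B^{k+1,p'}_2$ and smoothness for $t>0$. Two small points to tidy up: the schematic implications in the proof of Theorem~\ref{thm:regularity} are stated for $k\ge 3$, so for $m+1<3$ your smoothing step should instead run through the low-regularity Rellich/H\"older iteration at the beginning of that proof (which needs no initial-datum information and hence no cut-off); and the hypothesis $\rho\in L^2(I,W^{1,p})$ in the corollary, as opposed to $L^2(I,W^{2,p})$ in Theorem~\ref{thm:regularity}, is an inconsistency of the statement itself rather than something your argument must repair.
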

\begin{remark}
Theorem~\ref{thm:regularity} can be refined using the more sophisticated
machinery of maximal $L^p-L^q$-regularity and the theory of Besov spaces
\cite{Grigoryan}. The correct space for the initial condition for a solution
in the space
$$
L^q(I, W^{k, p}) \cap W^{1,q}(I, W^{k-1,p}) \cap \ldots \cap W^{k,q}(I,
L^p)
$$
is the Besov space $B^{s,p}_q$ for
$$
s := k - \frac{2}{q}.
$$
For $sp > 4$ a solution is continuous in space and time and this bound is
sharp. The argument of Theorem~\ref{thm:regularity} can be used to show that
a solution in this space but initial conditions in $B^{s+1,p}_q$ is in
$$
L^q(I, W^{k+1, p}) \cap W^{1,q}(I, W^{k,p}) \cap \ldots \cap W^{k,q}(I,
W^{1,p}).
$$
\end{remark}


\section{Local Existence}\label{sec:ste}
In this section we show that we can find solutions to the Donaldson flow that
exist for a short time. The method of proof is essentially to apply Banach's
fixed point theorem to the linearized flow equation and show that the
difference between the linear and the non-linear operator has quadratic
growth. We will find a solution in the set $\sS^{1,2,p}_a$ for a $p>4$ with
initial condition $\rho(t=0,\cdot)$ in the Besov space $B^{1,p}_2$ (see
\cite{Grigoryan} for a definition). The set $\sS^{1,2,p}_a$ is the set of
paths of sections $\rho \in W^{1,2,p}(M_T, \Lambda^2)$, such that
$\rho(t,\cdot)$ is a symplectic form and represents the cohomolgy class $a\in
H^2$ for all $t \in [0,T)$, where $W^{1,2,p}(M_T,\Lambda^2)$ is the Banach
space
$$
W^{1,2,p}(M_I, \Lambda^2) :=
W^{1,2}(I, L^p (M, \Lambda^2)) \cap L^2(I, W^{2,p}(M, \Lambda^2)), \quad I
:= [0,T).
$$
\begin{theorem}[{\bf Short Time Existence}]\label{thm:localExistence}
Let $p>4$. For all $\rho_0 \in B^{1,p}_2$ there exists a $T>0$ such that
there exists a unique solution $\rho \in \sS_{a}^{1,2,p}$ on the interval
$[0,T)$ to the Donaldson flow with $\rho(0,\cdot) = \rho_0$.
\end{theorem}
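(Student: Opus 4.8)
The plan is to recast the Donaldson flow~\eqref{eq:DONFLOW} as a fixed point problem for the linearised equation and apply Banach's fixed point theorem in a small ball of $\sS^{1,2,p}_a$, as announced. Write $I=[0,T)$ with $T\le1$ to be fixed small; throughout I use that $W^{1,2,p}(M_I,\Lambda^2)\hookrightarrow C^0(I,B^{1,p}_2)\hookrightarrow C^0(I,C^0)$ for $p>4$, together with the Sobolev product and interpolation estimates of the appendix (Lemma~\ref{lem:sobolevProd}, Proposition~\ref{prop:gagliardo}). First I would use the bounded right inverse of the trace map $W^{1,2,p}(M_I,\Lambda^2)\to B^{1,p}_2(M,\Lambda^2)$ from the remark on Besov spaces to fix an extension $\bar\rho\in W^{1,2,p}(M_I,\Lambda^2)$ of $\rho_0$ with $\bar\rho(0,\cdot)=\rho_0$; composing with the (time-independent, bounded) Hodge projection onto closed forms and removing the harmonic part of the cohomology drift $t\mapsto[\bar\rho(t,\cdot)]-a$, one arranges $d\bar\rho=0$ and $[\bar\rho(t,\cdot)]=a$ for all $t$, so $\bar\rho-\rho_0$ is exact, and after shrinking $T$ the $C^0$-bound gives $\bar\rho(t,\cdot)\wedge\bar\rho(t,\cdot)>0$. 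Writing $\rho=\bar\rho+\sigma$, the flow becomes $\p_t\sigma=d\,*^{\bar\rho+\sigma}d\Theta^{\bar\rho+\sigma}-\p_t\bar\rho$ with $\sigma(0,\cdot)=0$, to be solved for $\sigma$ in the space $\cX$ of exact two-forms in $W^{1,2,p}(M_I,\Lambda^2)$ vanishing at $t=0$. Let $L$ be the linearisation of $\rho\mapsto d\,*^\rho d\Theta^\rho$ at $\bar\rho$ — a second-order operator whose output is always exact, whose top-order coefficients are algebraic in $\bar\rho$ (hence in $C^0(I\times M)$) and whose first-order coefficients involve $\p\bar\rho\in L^2(I,W^{1,p})\subset L^2(I,C^0)$, and whose principal symbol is positive definite on exact two-forms (this is the parabolicity of the flow, manifest in the equivalent equation of Theorem~\ref{thm:evolutionK}(iii) with principal part $\tfrac1u d^{*^\rho}d$). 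Put $N(\sigma):=d\,*^{\bar\rho+\sigma}d\Theta^{\bar\rho+\sigma}-\p_t\bar\rho-L\sigma$.

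The analytic heart is a $T$-uniform maximal regularity estimate for $\p_t-L$: for $f\in L^2(I,L^p)$ valued in exact two-forms the problem $\p_t\tau-L\tau=f$, $\tau(0,\cdot)=0$ has a unique solution $\tau\in\cX$ with $\norm{\tau}_{W^{1,2,p}}\le C_0\norm{f}_{L^2(I,L^p)}$ and $C_0$ independent of $T\le1$. I would prove this exactly as Lemma~\ref{lem:maximalRegularity}: localise in coordinate charts where $\bar\rho$ equals the standard symplectic form at the base point, invoke maximal $L^2(I,L^p)$-regularity for the constant-coefficient Laplacian on $\R\times\R^4$ (cf.\ \cite{Lamberton}), and absorb the errors from the non-constant, non-smooth coefficients, using that $p>4$ makes the estimate~\eqref{eq:maxRegAssumption} available via H\"older and Proposition~\ref{prop:gagliardo}; the $\epsilon$-form of~\eqref{eq:maxRegAssumption} is exactly what lets the first-order terms $\p\bar\rho\cdot\p(\cdot)$ be absorbed into the principal part plus lower order. (Alternatively one may run this on the scalar system for $K=\tfrac{\rho^+}{u}$ of Theorem~\ref{thm:evolutionK}(iii), whose principal part is literally $\tfrac1u d^{*^\rho}d$ as in Lemma~\ref{lem:maximalRegularity}, and then recover $\rho$ by Theorem~\ref{thm:K}.) Let $\cR:f\mapsto\tau$ be the solution operator and $\Phi:=\cR\circ N$ on $\cX$.

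It then remains to run the contraction. Schematically $N(\sigma)=P_1(\tfrac1u,\rho)\,\p^2\sigma+P_2(\tfrac1u,\rho)\,\p\rho\,\p\sigma+P_3(\tfrac1u,\rho)\,\p\rho\,\p\rho$ minus the value and derivative of this at $\sigma=0$, with $\rho=\bar\rho+\sigma$ and the $P_i$ smooth in the indicated variables. Hence $N(0)=d\,*^{\bar\rho}d\Theta^{\bar\rho}-\p_t\bar\rho$ lies in $L^2(I,L^p)$ (the $\p^2\bar\rho$ part by definition of $W^{1,2,p}$, the $(\p\bar\rho)^2$ part by interpolating $\p\bar\rho\in L^2(I,L^\infty)\cap C^0(I,L^p)$), with $\norm{N(0)}_{L^2([0,T],L^p)}\to0$ as $T\to0$ by absolute continuity of the integral; and the Taylor remainder gives, on $B_R:=\{\sigma\in\cX:\norm{\sigma}_{W^{1,2,p}}\le R\}$,
\[
\norm{N(\sigma)-N(\sigma')}_{L^2(I,L^p)}\le c\bigl(R+\norm{\bar\rho-\rho_0}_{C^0(I\times M)}+o_T(1)\bigr)\norm{\sigma-\sigma'}_{W^{1,2,p}},
\]
with $c$ depending only on $\norm{\bar\rho}_{W^{1,2,p}}$, $\sup_M\tfrac1{u(\rho_0)}$ and the background geometry, where $\sigma(0,\cdot)=0$ makes $\sigma$ small in $C^0$ and $\norm{\bar\rho-\rho_0}_{C^0(I\times M)}\to0$ as $T\to0$. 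Choosing $R$ and then $T$ small makes $\Phi$ a self-map and contraction of $B_R$; its fixed point $\sigma$ gives $\rho=\bar\rho+\sigma$ with $\rho-\rho_0$ exact, $\rho\wedge\rho>0$ on $I$, solving~\eqref{eq:DONFLOW}, so $\rho\in\sS^{1,2,p}_a$. For uniqueness, two solutions in $\sS^{1,2,p}_a$ with the same initial datum are both fixed points of $\Phi$ on a possibly shorter interval, hence agree there, and then on all of $[0,T)$ by the standard open--closed continuation argument.

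The step I expect to be the main obstacle is the $T$-uniform maximal regularity of the second step: the linearised operator is parabolic only on the subspace of exact two-forms, and its coefficients are merely $W^{1,2,p}$ — continuous but not Lipschitz in $(t,x)$ — so the frozen-coefficient/perturbation scheme behind Lemma~\ref{lem:maximalRegularity} must be carried out carefully, in particular to keep $C_0$ from blowing up as $T\to0$ and to absorb the $\p\bar\rho$-first-order terms. Everything else — the Hodge-theoretic set-up, the Sobolev product and interpolation estimates underlying the quadratic bound on $N$, and the continuation argument for uniqueness — is routine given the appendix and the results of the earlier sections.
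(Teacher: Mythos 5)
Your overall strategy is the same as the paper's -- extend the initial datum to a path, linearise, and run Banach's fixed point theorem using quadratic estimates for the nonlinearity together with a maximal regularity bound for the linear part -- but you invert a different linear operator, and that difference matters. The paper freezes the coefficients at the initial datum and only ever solves the Cauchy problem for the \emph{autonomous} operator $d\frac{d^{*^{\rho_0}}}{u_0}$ (with the extension $\rhotilde_0$ taken to be the semigroup orbit of $\rho_0$ under this operator); the discrepancy between this frozen operator and the true time-dependent linearisation $L_{\rhotilde_0}$ is pushed into the right-hand side of the iteration and controlled by the ``freezing the coefficients'' and ``lower order terms'' lemmas (Lemmas~\ref{lem:freezingTheCoefficients}, \ref{lem:lowerOrderTerms}), where the smallness of $\norm{\rhotilde_0-\rho_0}_{L^\infty(I,W^{1,p})}$ and of $\norm{\rhotilde_0}_{L^2(I,W^{2,p})}$ for small $T$ does the work. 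You instead linearise at a generic extension $\bar\rho$ and propose to invert the full nonautonomous operator $\p_t+L_{\bar\rho}$, whose coefficients are only of class $W^{1,2,p}$ in $(t,x)$. That is essentially what the paper does later, in the semiflow section, and your contraction scheme and quadratic/Lipschitz estimates for $N$ are consistent with Lemma~\ref{lem:quadraticEstimate}; so the route is viable, but it imports the harder linear theory into the short-time existence proof.

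The concrete thin spot is exactly there: what you sketch (localisation, comparison with the constant-coefficient Laplacian, absorption of the first-order terms via~\eqref{eq:maxRegAssumption} and Gagliardo--Nirenberg) yields the \emph{a priori estimate} $\norm{\p_t\tau}_{L^2(I,L^p)}\le C_0\bigl(\norm{\p_t\tau+L_{\bar\rho}\tau}_{L^2(I,L^p)}+\norm{\tau}_{L^2(I,L^p)}\bigr)$, but not the \emph{existence} of a solution $\tau\in\cX$ of $\p_t\tau - L\tau=f$, $\tau(0,\cdot)=0$, which you assert and on which your solution operator $\cR$ rests. For a nonautonomous operator with merely continuous, non-Lipschitz coefficients, existence does not follow from the estimate alone; in the paper it requires the duality/dense-image argument of Lemma~\ref{lem:denseImage} together with approximation of $\bar\rho$ and $f$ by smooth data and a weak-compactness limit (Lemma~\ref{lem:cauchy}), i.e.\ the machinery of Section~\ref{sec:semiflow}. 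You should either supply that argument (and check the constant stays uniform as $T\to0$, e.g.\ by extending the coefficients constantly in time), or do what the paper does and solve only the autonomous frozen-coefficient problem~\eqref{eq:homogenousCauchy}--\eqref{eq:inhomognousCauchy}, for which existence and maximal regularity come from semigroup theory, at the price of adding the coefficient-freezing and lower-order error terms to $f(\rhohat)$ as in~\eqref{eq:steIteration}. With that repaired, the remaining steps of your proposal (smallness of $N(0)$ as $T\to0$, self-map and contraction on a small ball, uniqueness by continuation) match the paper's argument.
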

\begin{proof}
See page \pageref{proof:localExistence}
\end{proof}
Consider the operator $d\frac{d^{*^{\rho_0}}}{u_0}: d\Om^1 \to d\Om^1$. It is
self-adjoint with respect to the inner product
$$
\inner{\rhohat_1}{\rhohat_2} = \int_M \inner{\rhohat_1}{
\rhohat_2}_{g^{\rho_0}}\dvol
$$
on $d\Om^1$ and the
inner product defined by
$$
\inner{\xi_1}{\xi_2} = \int_M \inner{\xi_1}{ \xi_2}_{g^{\rho_0}}\dvol_{\rho_0}
$$
on $\Om^1$.
It generates a strongly continuous semigroup on $d\left(W^{1,p}(M, \Lambda^1)
\right)$ with domain $d\left(W^{3,p}(\Lambda^1)\right)$ and it has the `maximal
regularity' property. TODO: Argument. In particular, there exists a unique
solution
$$
\rhotilde_0 \in L^2(I, W^{2,p}(M, \Lambda^2)) \cap W^{1,2}(I, L^p(M,
\Lambda^2))
$$
to the
homogeneous Cauchy problem
\begin{equation}\label{eq:homogenousCauchy}
\p_t \rhotilde_0 + d \frac{d^{*^{\rho_0}}}{u_0} \rhotilde_0 = 0, \qquad
\rhotilde(t = 0,
\cdot) = \rho_0.
\end{equation}
Further there exists a unique solution
$$
\rhohat\in L^2(I, W^{2,p}(M,
\Lambda^2)) \cap W^{1,2}(I,L^p(M, \Lambda^2))
$$
to the Cauchy problem
\begin{equation}\label{eq:inhomognousCauchy}
\p_t \rhohat + d \frac{d^{*^{\rho_0}}}{u_0}  \rhohat = f, \qquad \rhohat(t
= 0, \cdot) = 0
\end{equation}
for $f\in d(L^2(I, W^{1,p}))$.  Let
$$
B_r \subset d \left\{\lambda \in  W^{1,3,p}(M, \Lambda^1) |\,
\lambda(t=0,\cdot) = 0\right\}
$$
be a ball of radius $r>0$ centered at zero. Given $\rhohat \in B_r$, let
$\rhohat'$ be the unique solution to the equation
\begin{equation}\label{eq:steIteration}
\begin{gathered}
\p_t \rhohat' + d\frac{d^{*^{\rho_0}}}{u_0}\rhohat' = f(\rhohat),
\qquad \rhohat(t=0,\cdot) = 0\\
f(\rhohat):= \left(d*^{\rho}d\theta^\rho -
d*^{\rhotilde_0}d\theta^{\rhotilde_0} + L_{\rhotilde_0} \rhohat\right) +
d^{*^{\rhotilde_0}} d
\theta^{\rhotilde_0}\\
\qquad + \left(-A^{\rhotilde_0} + d\frac{d^{*^{\rho_0}}}{u_0} -
d\frac{d^{*^{\rhotilde_0}}}{\tilde{u}_0} \right) \rhohat\\ \rho :=
\rhotilde_0
+ \rhohat, \qquad u_0 = \frac{\rho_0 \wedge \rho_0}{2\dvol}, \qquad
\tilde{u}_0 := \frac{\rhotilde_0 \wedge \rhotilde_0}{2\dvol}, \qquad
\theta^\rho = \frac{2\rho^+}{u} - \Abs{\frac{\rho^+}{u}}^2 \rho.
\end{gathered}
\end{equation}
Here, $L_{\rhotilde_0}$ is the linearized gradient operator and $A^{\rho_0}$
is the lower order part of this operator. Both operators are given in the
following~Lemma \ref{lem:linearization} . This defines us a map $\sR$,
$$
\sR: d W^{1,3,p}(M, \Lambda^1) \to d W^{1,3,p}(M, \Lambda^1):\qquad \rhohat
\mapsto \rhohat'.
$$
If $\rhohat$ is a fixed point of $\sR$, then $\rho = \rho_0 + \rhohat$ is a
solution to the Donaldson flow. We will show that $\sR(B_r) \subseteq B_r$ for
a small $r>0$ and that $\sR$ is a contraction on $B_r$.
Theorem~\ref{thm:localExistence} will follow from these two claims by the
Banach fixed point theorem.

In preparation for the proof of Theorem~\ref{thm:localExistence} we need to
following four lemmas.  The first is the computation of the linearized
gradient operator. Recall that the gradient operator is given by
$$
\sS_a \to T\sS_a: \qquad \rho \to d*^\rho d \theta^\rho.
$$
\begin{lemma}[{\bf The linearized Operator}]\label{lem:linearization}
Let $\rho \in \sS_a$ and $\rho_s : (-1,1) \to \sS_a$ be a smooth path such
that $\rho_0 = \rho$ and $\left. \frac{d}{ds}\right|_{s=0} \rho_s =
\rhohat$. Then
\begin{equation}\label{eq:Lrho}
\left.\frac{d}{ds}\right|_{s=0} -d{*^{\rho_s}}d \theta^{\rho_s} =: L_\rho
\rhohat
=
d\frac{d^{*^\rho}}{u}\rhohat + A^\rho \rhohat
\end{equation}
where
\begin{equation}\label{eq:Arho}
A^\rho \rhohat :=  d*^\rho\left(
\left(\frac{du}{u^2}\right)\wedge \left( \rhohat + *^\rho \rhohat
\right)\right) +
d*^\rho \left(d\left( \Abs{\frac{\rho^+}{u}}^2 \right) \wedge
\rhohat\right)
-
d\hat{*}^{\rho,\rhohat} d\theta^\rho
\end{equation}
and $\hat{*}^{\rho,\rhohat} : \Lambda^3 \to \Lambda^{1}$ is the
linearization of the the Hodge star operator $*^{\rho_s}$ at $\rho$.
\end{lemma}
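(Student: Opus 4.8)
The plan is to differentiate $-d*^{\rho_s}d\theta^{\rho_s}$ in $s$ at $s=0$ directly, using that the exterior derivative does not depend on $\rho$, so that $\tfrac{d}{ds}$ and $d$ commute. This gives
\[
L_\rho\rhohat
= -\,d\bigl(\hat{*}^{\rho,\rhohat}\,d\theta^\rho\bigr)\;-\;d\bigl(*^\rho\,d\widehat\theta\bigr),
\qquad
\widehat\theta:=\left.\tfrac{d}{ds}\right|_{s=0}\theta^{\rho_s},
\]
and the first summand is precisely the term $-d\hat{*}^{\rho,\rhohat}d\theta^\rho$ of $A^\rho$; everything then reduces to analysing $-d*^\rho d\widehat\theta$.

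The key structural observation is that $\theta^\rho=\tfrac{2\rho^+}{u}-\Abs{\tfrac{\rho^+}{u}}^2\rho$ is a pointwise, derivative-free expression in $\rho$, so $\widehat\theta$ is again a pointwise expression in $(\rho,\rhohat)$ and every derivative of $\rhohat$ in $L_\rho\rhohat$ is produced by the outer operator $d*^\rho d$. I would compute $\widehat\theta$ using Lemma~\ref{lem:keyRegularity}, which gives $\left.\tfrac{d}{ds}\right|_{0}\tfrac{2\rho_s^+}{u_s}=\tfrac{2}{u}R^\rho\rhohat^{+^\rho}$, together with the Leibniz rule and $2\hat u\,\dvol=2\rhohat\wedge\rho$ for the remaining term:
\[
\widehat\theta
=\frac{2}{u}R^\rho\rhohat^{+^\rho}
-\Abs{\tfrac{\rho^+}{u}}^2\rhohat
-\Bigl(\left.\tfrac{d}{ds}\right|_{0}\Abs{\tfrac{\rho_s^+}{u_s}}^2\Bigr)\rho .
\]

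Next I would apply $-d*^\rho d$, expanding every product $d(f\cdot(\text{form}))$ by Leibniz and using $d\rho=0$ and $d\rhohat=0$ (the path lies in $\sS_a$). The piece in which $d$ falls on the scalar $\Abs{\tfrac{\rho^+}{u}}^2$ is exactly the second summand $d*^\rho\bigl(d\left(\Abs{\tfrac{\rho^+}{u}}^2\right)\wedge\rhohat\bigr)$ of $A^\rho$; the piece in which $d$ falls on the factor $\tfrac1u$ in $\tfrac{2}{u}R^\rho\rhohat^{+^\rho}$ gives, after rewriting $2R^\rho\rhohat^{+^\rho}=R^\rho(\rhohat+*^\rho\rhohat)$ and splitting off the correction $(R^\rho-\mathrm{id})w=-\tfrac{w\wedge\rho}{\dvol_\rho}\rho$, the first summand $d*^\rho\bigl(\tfrac{du}{u^2}\wedge(\rhohat+*^\rho\rhohat)\bigr)$ of $A^\rho$ plus lower-order remainders. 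The remaining contributions — the term $-d*^\rho\bigl(\tfrac{2}{u}\,d(R^\rho\rhohat^{+^\rho})\bigr)$, the term from $\bigl(\left.\tfrac{d}{ds}\right|_{0}\Abs{\tfrac{\rho_s^+}{u_s}}^2\bigr)\rho$, and the lower-order remainders produced above — must then be shown to collapse to the single second-order operator $d\tfrac{d^{*^\rho}}{u}\rhohat$. Here it is convenient to use $*^\rho d\eta=-d^{*^\rho}(*^\rho\eta)$ for $\eta\in\Om^2$, so that $-d*^\rho d\widehat\theta=d\,d^{*^\rho}(*^\rho\widehat\theta)$, together with $*^\rho\iota(X)\rho=-\rho\wedge g(X,\cdot)$, $\dvol_\rho=u\,\dvol$, $\rho^+\wedge\rhohat^-=0$ and $\tfrac{\rho\wedge(2\rho^+/u)}{\dvol_\rho}=2\Abs{\tfrac{\rho^+}{u}}^2$: one isolates the part $\tfrac1u\rhohat$ of $*^\rho\widehat\theta$, and the Leibniz remainder of $d\,d^{*^\rho}(\tfrac1u\rhohat)$ together with the remaining algebraic terms reproduces, after the cancellations, exactly the operator $A^\rho$.

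The main obstacle is this last step: carefully tracking all the first-order remainder terms through the conversions $*\leftrightarrow*^\rho$ and $R^\rho\leftrightarrow\mathrm{id}$, and verifying both that their second-order parts cancel — so that $d\tfrac{d^{*^\rho}}{u}$ is genuinely the full principal part — and that their first-order parts are exactly absorbed into the three displayed summands of $A^\rho$. The remaining ingredients — commuting $d$ with $\tfrac{d}{ds}$, invoking Lemma~\ref{lem:keyRegularity}, and reading off the $\hat{*}$-term — are routine.
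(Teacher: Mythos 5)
Your outline follows the same route as the paper's proof: commute $\frac{d}{ds}$ with $d$, split off the term $-d\hat{*}^{\rho,\rhohat}d\theta^\rho$, and expand $-d*^\rho d\,\thetahat$ by Leibniz using $d\rhohat=0$, where $\thetahat:=\left.\frac{d}{ds}\right|_{s=0}\theta^{\rho_s}$. But the step that actually produces the asserted formula is exactly the one you defer as ``the main obstacle''. As written you stop at
\[
\thetahat=\frac{2}{u}R^\rho\rhohat^{+^\rho}-\Abs{\frac{\rho^+}{u}}^2\rhohat-\Bigl(\left.\frac{d}{ds}\right|_{s=0}\Abs{\frac{\rho_s^+}{u_s}}^2\Bigr)\rho
\]
and merely assert that, after applying $-d*^\rho d$, the $R^\rho$-correction and the $\bigl(\frac{d}{ds}\Abs{\rho_s^+/u_s}^2\bigr)\rho$ term ``collapse'' into $d\frac{d^{*^\rho}}{u}\rhohat+A^\rho\rhohat$. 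That cancellation is the entire content of the lemma, so the proposal as it stands has a genuine gap.

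The gap closes already at the level of $\thetahat$, before any exterior derivative is applied: pointwise one has
\[
\left.\frac{d}{ds}\right|_{s=0}\theta^{\rho_s}=\frac{\rhohat+*^\rho\rhohat}{u}-\Abs{\frac{\rho^+}{u}}^2\rhohat,
\]
which is precisely the identity the paper quotes from \cite{KROMSAL} and takes as its starting point. In your normalization this amounts to checking
$\left.\frac{d}{ds}\right|_{s=0}\Abs{\rho_s^+/u_s}^2=-\frac{(\rhohat+*^\rho\rhohat)\wedge\rho}{u\,\dvol_\rho}$, since $2R^\rho\rhohat^{+^\rho}=\rhohat+*^\rho\rhohat-\frac{(\rhohat+*^\rho\rhohat)\wedge\rho}{\dvol_\rho}\rho$; this is a short algebraic computation using $\rhohat\wedge\rho=\bigl(\langle\rho^+,\rhohat^+\rangle-\langle\rho^-,\rhohat^-\rangle\bigr)\dvol$, $*^\rho=R^\rho*R^\rho$, $R^\rho\rho=-\rho$ and $\abs{\rho^+}^2-\abs{\rho^-}^2=2u$, and you should carry it out rather than postpone it. Once the closed form for $\thetahat$ is in hand, the rest is three lines and there is no second-order bookkeeping at all: since $\rhohat$ is closed, the only derivative of $\rhohat$ in $d\thetahat$ sits in $\frac1u d(*^\rho\rhohat)$, and $-d*^\rho\bigl(\frac1u d*^\rho\rhohat\bigr)=d\frac{d^{*^\rho}}{u}\rhohat$ exactly (using $d^{*^\rho}=-*^\rho d*^\rho$ on two-forms), while the terms where $d$ falls on $\frac1u$ and on $\Abs{\rho^+/u}^2$ give the first two summands of $A^\rho$ verbatim; none of the $*\leftrightarrow*^\rho$ or $R^\rho\leftrightarrow\mathrm{id}$ conversions you anticipate are needed.
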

\begin{proof}
We proved in~\cite{KROMSAL} that
$$
\left. \frac{d}{ds}\right|_{s=0} \theta^{\rho_s} = \frac{\rhohat + *^\rho
\rhohat}{u} - \Abs{\frac{\rho^+}{u}}^2 \rhohat.
$$
Since $d\rhohat = 0$,
\begin{equation*}
\begin{split}
L_\rho \rhohat
&=
-d*^{\rho}d \left(\left.\frac{d}{ds}\right|_{s=0}\theta^{\rho_s}\right) -
d\hat{*}^{\rho,\rhohat} d\theta^\rho\\
&=
-d*^\rho \frac{1}{u} d *^\rho \rhohat + d*^\rho\left(
\left(\frac{du}{u^2}\right)\wedge \left( \rhohat + *^\rho \rhohat
\right)\right) +
d*^\rho \left(d\left( \Abs{\frac{\rho^+}{u}}^2 \right) \wedge
\rhohat\right)\\
&\qquad -
d\hat{*}^{\rho,\rhohat} d\theta^\rho\\
&=
d\frac{d^{*^\rho}}{u}\rhohat + A^\rho \rhohat.
\end{split}
\end{equation*}
This proves the lemma.
\end{proof}

The content of the next lemma are the quadratic estimates necessary to proof
existence of a solution to the Donaldson flow by the Banach fixed point theorem.
We establish these estimates in terms of polynomials with positive real
coefficients in several variables. We denote such a polynom by
$$
\mathfrak{p} (x_1,\ldots,x_\ell) = \sum_{\Abs{\alpha} \leq m} a_\alpha x^\alpha
$$
where the sum runs over all multi-indices $\alpha = (\alpha_1,\ldots,
\alpha_\ell) \in \N_0^\ell$ with $\Abs{\alpha} = \sum_{i=1}^\ell \alpha_i \leq
m$ and $a_\alpha \geq 0$ for all $\alpha$.
\begin{lemma}[{\bf Quadratic Estimates}]\label{lem:quadraticEstimate}
Let $p > 4$.

\smallskip\noindent{\bf (i)}
There exist two polynomials $\mathfrak{p_1}, \mathfrak{p_2}$ with positive
coefficients with the following significance.  For all $\rho \in \sS_a$
there exists a $\delta > 0$ such that for all $\rhohat \in d\Om^1$ with
$\norm{\rhohat}_{L^\infty} < \delta$ and $\rho':= \rho + \rhohat$
\begin{equation*}
\begin{split}
\norm{d*^{\rho'} d \theta^{\rho'} - d{*^\rho}d\theta^\rho - L_\rho
\rhohat}_{L^p}
&\leq
\mathfrak{p_1} \left(C, \norm{\rho}_{W^{1,p}}, \norm{\rhohat}_{W^{1,p}}
\right)\norm{\rhohat}_{W^{1,p}} \norm{\rhohat}_{W^{2,p}}\\
&\quad +
\mathfrak{p_2}(C, \norm{\rho}_{W^{1,p}},
\norm{\rhohat}_{W^{1,p}})\norm{\rho}_{W^{2,p}} \norm{\rhohat}_{W^{1,p}}^2.
\end{split}
\end{equation*}
where
$$
C := \sup_{\substack {x\in M \\ 0 \leq s \leq 1}}\frac{1}{u_{\rho(x) +
s\rhohat(x)}}, \qquad u_{\rho} := \frac{\rho\wedge \rho}{2\dvol}.
$$

\smallskip\noindent{\bf (ii)}
There exist two polynomials $\mathfrak{p}_3$, $\mathfrak{p}_4$ with positive
coefficients with the following significance.  For all $\rho \in \sS_a$
there exists a $\delta > 0$ such that if $\rho_1 := \rho + \rhohat_1$,
$\rho_2 :=
\rho + \rhohat_2$ for $\rhohat_1, \rhohat_2 \in d\Om^1$ with
$\norm{\rhohat_1}_{L^\infty}, \norm{\rhohat_2}_{L^\infty} < \delta$, then
\begin{equation*}
\begin{split}
&\norm{d*^{\rho_1}d\theta^{\rho_1} - d*^{\rho_2}d\theta^{\rho_2} + L_\rho \left(
\rhohat_1 - \rhohat_2 \right)}_{L^p}\\
&\leq
\mathfrak{p_3} \left(C, \norm{\rho}_{W^{1,p}}, \norm{\rhohat_1}_{W^{1,p}},
\norm{\rhohat_2}_{W^{1,p}}
\right) \left( \norm{\rhohat_1}_{W^{1,p}} + \norm{\rhohat_2}_{W^{1,p}}
\right)\norm{\rhohat_1 - \rhohat_2}_{W^{2,p}}\\
&\qquad +
\mathfrak{p_4}(C, \norm{\rho}_{W^{1,p}},
\norm{\rhohat_1}_{W^{1,p}},\norm{\rhohat_2}_{W^{1,p}})\left(\norm{\rho}_{W^{2,p}}
+ \norm{\rhohat_1}_{W^{2,p}} + \norm{\rhohat_2}_{W^{2,p}}\right)\\
&\qquad\qquad
\cdot \left( \norm{\rhohat_1}_{W^{1,p}} + \norm{\rhohat_2}_{W^{1,p}}
\right)\norm{\rhohat_2 - \rhohat_2}_{W^{1,p}}.
\end{split}
\end{equation*}
where
$$
C := \sup_{\substack {x\in M \\ 0 \leq s,s' \leq 1}}\frac{1}{u_{\rho(x) +
s(s'\rhohat_1(x) + (1 - s') \rhohat_2(x))}}.
$$
\end{lemma}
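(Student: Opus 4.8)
The plan is to put the nonlinear operator $G(\rho):=d*^\rho d\theta^\rho$ into an explicit quasilinear normal form and then estimate its second order Taylor remainder term by term. On $\{u>0\}$ the form $\theta^\rho$ is a universal smooth function of $\rho$ and $u^{-1}$ (and of the fixed background metric $g$), $*^\rho$ is a smooth field of endomorphisms of $\Lambda^2$, and $\p u$ is linear in $\p\rho$ with coefficients polynomial in $\rho$; hence $d\theta^\rho$ and $*^\rho d\theta^\rho$ are linear in $\p\rho$ with such smooth coefficients, and one more application of $d$ gives, in local coordinates,
\begin{equation*}
  G(\rho)=B_1(\rho,u^{-1})\,\p^2\rho+B_2(\rho,u^{-1})\,\p\rho\cdot\p\rho ,
\end{equation*}
where $B_1,B_2$ are universal smooth functions of $(\rho,u^{-1})$ and of finitely many derivatives of $g$, and $\p\rho\cdot\p\rho$ is a pointwise bilinear expression in the first derivatives of $\rho$. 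The decisive structural point is that $\p^2\rho$ enters \emph{linearly} — this is the parabolic character of the flow — which forces the remainder to be genuinely quadratic in $\rhohat$. For the choice of $\delta$: since $\rho\in\sS_a$ is a fixed smooth symplectic form, $u_\rho\ge c_\rho>0$ on $M$, and using $W^{1,p}\hookrightarrow C^0$ for $p>4$ I pick $\delta>0$ so small that $\norm{\rhohat}_{L^\infty}<\delta$ forces $u_{\rho+s\rhohat}\ge\tfrac12 c_\rho$ for all $s\in[0,1]$, whence $C\le 2/c_\rho<\infty$; then the $L^\infty$-norms of $B_i,DB_i,D^2B_i$ evaluated at $\rho+s\rhohat$ are all bounded by $\mathfrak{q}(C,\norm{\rho}_{W^{1,p}}+\norm{\rhohat}_{W^{1,p}})$ for a polynomial $\mathfrak{q}$ with positive coefficients, the constants depending only on $M$ and $g$ being suppressed as usual.

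For (i), by Lemma~\ref{lem:linearization} the left-hand side is the second order Taylor remainder of $G$ at $\rho$, namely $\int_0^1(1-s)\,d^2G(\rho+s\rhohat)[\rhohat,\rhohat]\,ds$. Differentiating the normal form twice, $d^2G(\cdot)[\rhohat,\rhohat]$ is a finite sum of terms of the schematic shapes $b\,\rhohat\,\p^2\rhohat$, $b\,\p\rhohat\cdot\p\rhohat$, $b\,\rhohat\,\p\rho\cdot\p\rhohat$, $b\,\rhohat^2\,\p^2\rho$ and $b\,\rhohat^2\,\p\rho\cdot\p\rho$, where each $b$ is one of $B_i,DB_i,D^2B_i$ at $\rho+s\rhohat$. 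Each is estimated in $L^p$ by H\"older together with the embeddings $W^{1,p}\hookrightarrow C^0$ and $W^{2,p}\hookrightarrow C^1$ (both valid for $p>4$) and the Sobolev product estimates of Lemma~\ref{lem:sobolevProd}, always pulling the unique top-order factor out in $L^p$ and bounding the rest in $L^\infty$: the first three shapes are $\lesssim\mathfrak{q}\,\norm{\rhohat}_{W^{1,p}}\norm{\rhohat}_{W^{2,p}}$ (the third one may alternatively be placed in the second summand), while the last two are $\lesssim\mathfrak{q}\,\norm{\rhohat}_{W^{1,p}}^2\,\norm{\rho}_{W^{2,p}}$, using $\norm{\p\rho\cdot\p\rho}_{L^p}\le\norm{\p\rho}_{L^\infty}\norm{\p\rho}_{L^p}\lesssim\norm{\rho}_{W^{2,p}}\norm{\rho}_{W^{1,p}}$. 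Summing and carrying out $\int_0^1(1-s)\,ds$ yields (i), with $\mathfrak{p}_1,\mathfrak{p}_2$ read off from $\mathfrak{q}$.

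For (ii) I would use the two-parameter version of the same identity: with $\rhohat_t:=(1-t)\rhohat_2+t\rhohat_1$, so that $\rho_1-\rho_2=\rhohat_1-\rhohat_2$,
\begin{equation*}
  d*^{\rho_1}d\theta^{\rho_1}-d*^{\rho_2}d\theta^{\rho_2}+L_\rho(\rhohat_1-\rhohat_2)=\int_0^1\!\!\int_0^1 d^2G(\rho+\sigma\rhohat_t)\bigl[\rhohat_t,\,\rhohat_1-\rhohat_2\bigr]\,d\sigma\,dt ,
\end{equation*}
and the integrand is a sum of the same bilinear shapes with one argument $\rhohat_t$ (controlled by $\norm{\rhohat_1}_{W^{1,p}}+\norm{\rhohat_2}_{W^{1,p}}$, and by $\norm{\rhohat_1}_{W^{2,p}}+\norm{\rhohat_2}_{W^{2,p}}$ for its first derivative) and the other argument $\rhohat_1-\rhohat_2$. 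Routing the single $W^{2,p}$-bound onto whichever factor carries the extra derivative produces precisely the two groups of terms on the right-hand side of (ii) — the first with $\norm{\rhohat_1-\rhohat_2}_{W^{2,p}}$, the second with the $W^{2,p}$-factor $\norm{\rho}_{W^{2,p}}+\norm{\rhohat_1}_{W^{2,p}}+\norm{\rhohat_2}_{W^{2,p}}$ multiplied by $\norm{\rhohat_1-\rhohat_2}_{W^{1,p}}$. The constant $\delta$ is chosen as before, now along the family $\rho+s\bigl(s'\rhohat_1+(1-s')\rhohat_2\bigr)$, which is where the constant $C$ of (ii) comes from.

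The main obstacle is the bookkeeping just indicated: one must verify that in \emph{every} term of $d^2G$ the order-two derivative occurs at most once, so that each resulting product contains exactly one $W^{2,p}$-norm and at least one further $W^{1,p}$-norm of a perturbation — this is exactly what the quasilinear normal form guarantees, and the hypothesis $p>4$ is what makes it usable, since then $W^{1,p}\hookrightarrow C^0$ and $W^{2,p}\hookrightarrow C^1$ simultaneously. The only non-uniform ingredient is the pointwise lower bound on $u_{\rho+s\rhohat}$, which is why $\delta$ must depend on $\rho$ and why the estimates are phrased through the constant $C$.
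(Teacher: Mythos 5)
Your argument is in substance the paper's own: both prove (i) by writing the left-hand side as the integral form of the second-order Taylor remainder of $\rho\mapsto d*^\rho d\theta^\rho$, choosing $\delta$ so that $u_{\rho+s\rhohat}$ stays bounded away from zero (hence $C<\infty$), and then estimating the second derivative term by term with H\"older, the embeddings $W^{1,p}\hookrightarrow C^0$, $W^{2,p}\hookrightarrow C^1$, and the product estimates of Lemma~\ref{lem:sobolevProd}. The differences are organizational. For the integrand you use a local-coordinate quasilinear normal form $B_1(\rho,u^{-1})\p^2\rho+B_2(\rho,u^{-1})\p\rho\cdot\p\rho$, whereas the paper keeps the invariant calculus, computing $\frac{d^2}{ds^2}d*^{\rho_s}d\theta^{\rho_s}$ explicitly in terms of $\starhat$, $\widehat{\starhat}$, $\thetahat$, $\widehat{\thetahat}$ and peeling off one exterior derivative so as to estimate $W^{1,p}$-norms of products; the two bookkeepings are equivalent. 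For (ii) your two-parameter identity $\int_0^1\!\int_0^1 d^2G(\rho+\sigma\rhohat_t)[\rhohat_t,\rhohat_1-\rhohat_2]\,d\sigma\,dt$ is correct and is a cleaner way to exhibit the bilinear structure than the paper's route, which subtracts the two one-parameter remainders along $\rho+s'\rhohat_i$ and then telescopes, writing differences of coefficients such as $\widehat{\starhat}^{\rho_{1,s}}-\widehat{\starhat}^{\rho_{2,s}}$ and $\theta^{\rho_{1,s}}-\theta^{\rho_{2,s}}$ as integrals along connecting segments. One caveat on your claim that the routing yields \emph{precisely} the two stated groups: the cross terms in which the second derivative lands on $\rhohat_t$ while the coefficient is linear in $\rhohat_1-\rhohat_2$ (schematically $DB_1[\rhohat_1-\rhohat_2]\,\p^2\rhohat_t$) only give $\bigl(\norm{\rhohat_1}_{W^{2,p}}+\norm{\rhohat_2}_{W^{2,p}}\bigr)\norm{\rhohat_1-\rhohat_2}_{W^{1,p}}$ without the additional factor $\norm{\rhohat_1}_{W^{1,p}}+\norm{\rhohat_2}_{W^{1,p}}$ displayed in the second group; the paper's own treatment of its second and third terms (asserted to follow ``with the same techniques'') produces the same kind of contribution, and it is this weaker bound that is actually used in the contraction argument, so this is a shared imprecision rather than a gap peculiar to your proof.
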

\begin{proof}
Let
$$
\rho_s = \rho + s \rhohat.
$$
Then,
\begin{equation}\label{eq:proofQE1}
\begin{split}
d*^{\rho'} d\theta^{\rho'} - d*^\rho d\theta^\rho + L_\rho \rhohat
&=
\int_0^1 \frac{d}{ds} \left(d*^{\rho_s}d\theta^{\rho_s}\right) ds - \left.
\frac{d}{ds}\right|_{s=0} d*^{\rho_s} d \theta^{\rho_s}\\
&= \int_0^1\int_0^{s}
\left(\frac{d^2}{d^2s'}d*^{\rho_{s'}}d\theta^{\rho_{s'}}\right) ds' ds.
\end{split}
\end{equation}
The integral in this equation is to be understood point wise in each fibre
of the vector bundle $\Lambda^2 T^*M$. By Minkowski's integral inequality,
it is enough to show that
$\norm{\frac{d^2}{d^2s}d*^{\rho_{s}}d\theta^{\rho_{s}}}_{L^p}$ is
bounded in the specified way. We compute,
\begin{equation}\label{eq:square}
\begin{split}
\frac{d^2}{d^2s} d*^{\rho_s}d\theta^{\rho_s}
&=
\frac{d}{ds} \left( d  \starhat^{\rho_s, \rhohat}d
\theta^{\rho_s} + d  *^{\rho_s} d\left(
\thetahat(\rho_s)\rhohat\right)\right)\\
&=
d\widehat{\starhat}^{\rho_s, \rhohat, \rhohat}d
\theta^{\rho_s} + 2 d  \starhat^{\rho_s,\rhohat} d\left(
\thetahat(\rho_s)\rhohat\right) + d *^{\rho_s} d \left(
\widehat{\thetahat}(\rho_s) \left(\rhohat ,\rhohat\right) \right),
\end{split}
\end{equation}
where
\begin{gather*}
\starhat^{\rho_s, \rhohat} := \frac{d}{ds} *^{\rho_s},\qquad
\widehat{\starhat}^{\rho_s, \rhohat, \rhohat} := \frac{d^2}{d^2s}
*^{\rho_s},\\
\qquad
\thetahat(\rho_s)\rhohat := \frac{d}{ds} \theta^{\rho_s}, \qquad
\widehat{\thetahat}(\rho_s) \left(\rhohat ,\rhohat\right) :=
\frac{d^2}{d^2s} \theta^{\rho_s}.
\end{gather*}
In what follows we suppress the notation of constants appearing in
inequalities. We estimate,
\begin{equation}\label{eq:squareEstimates}
\begin{split}
&\norm{\frac{d^2}{d^2s} d*^{\rho_s}d\theta^{\rho_s}}_{L^p}
\leq
\norm{\widehat{\starhat}^{\rho_s, \rhohat, \rhohat}d
\theta^{\rho_s}}_{W^{1,p}} + \norm{\starhat^{\rho_s,\rhohat} d\left(
\thetahat(\rho_s)\rhohat\right)}_{W^{1,p}}\\
&\qquad + \norm{*^{\rho_s} d \left(
\widehat{\thetahat}(\rho_s) \left(\rhohat
,\rhohat\right)\right)}_{W^{1,p}}\\
&\leq
\norm{\widehat{\starhat}^{\rho_s, \rhohat, \rhohat}}_{W^{1,p}} \norm{
\theta^{\rho_s}}_{W^{2,p}} +
\norm{\starhat^{\rho_s,\rhohat}}_{W^{1,p}} \norm{\left(
\thetahat(\rho_s)\rhohat\right)}_{W^{2,p}}\\
&\qquad + \norm{*^{\rho_s}}_{W^{1,p}}\norm{\left(
\widehat{\thetahat}(\rho_s) \left(\rhohat
,\rhohat\right)\right)}_{W^{2,p}},
\end{split}
\end{equation}
For ease of notation we don't number the different polynomials appearing in
the next estimates. We estimate the first of these summands by
\begin{equation*}
\begin{split}
&\norm{\widehat{\starhat}^{\rho_s, \rhohat, \rhohat}}_{W^{1,p}} \norm{
\theta^{\rho_s}}_{W^{2,p}}
\leq
\mathfrak{p} \left(C, \norm{\rho}_{W^{1,p}}, \norm{\rhohat}_{W^{1,p}}
\right)\norm{\rhohat}_{W^{1,p}}^2 \norm{
\theta^{\rho_s}}_{W^{2,p}}\\
&\qquad \leq
\mathfrak{p} \left(C, \norm{\rho}_{W^{1,p}}, \norm{\rhohat}_{W^{1,p}}
\right)\norm{\rhohat}_{W^{1,p}}^2\\
&\qquad \qquad \cdot \mathfrak{p}(C,
\norm{\rho}_{W^{1,p}}, \norm{\rhohat}_{W^{1,p}})(1 +
\norm{\rho_s}_{W^{2,p}}(1 +
\norm{\rho_s}_{L^\infty}))\\
&\qquad \leq
\mathfrak{p} \left(C, \norm{\rho}_{W^{1,p}}, \norm{\rhohat}_{W^{1,p}}
\right)\norm{\rhohat}_{W^{1,p}} \norm{\rhohat}_{W^{2,p}} \\
&\qquad \qquad +
\mathfrak{p}(C, \norm{\rho}_{W^{1,p}},
\norm{\rhohat}_{W^{1,p}})\norm{\rho}_{W^{2,p}}
\norm{\rhohat}_{W^{1,p}}^2.
\end{split}
\end{equation*}
Here the first inequality uses the product estimates of
Lemma~\ref{lem:sobolevProd} (i) and the fact that $\widehat{\starhat}^{\rho_s,
\rhohat, \rhohat}$ is quadratic in $\rhohat$. Further, by the definition of
$*^{\rho_s}$, $\widehat{\starhat}^{\rho_s}$ is a quadratic form given by a
polynomial in the variables $\frac{1}{u_s}$ and $\rho_s$, where $u_s =
\frac{\rho_s \wedge \rho_s}{2\dvol}$. Therefore we can estimate it's
$W^{1,p}$-norm by a polynomial $\mathfrak{p}(C, \norm{\rho_s}_{W^{1,p}})$
using Lemma~\ref{lem:sobolevProd} (ii). The second inequality follows again
from Lemma~\ref{lem:sobolevProd} (ii) now using that $\theta^{\rho_s}$ is a
polynomial in the variables $\frac{1}{u_s}$ and $\rho_s$. The third inequality
follows by grouping the appearing summands accordingly and using
$\norm{\rhohat}_{L^\infty} \leq \norm{\rhohat}_{W^{1,p}}$. With the same
arguments we estimate
\begin{equation*}
\begin{split}
&\norm{\starhat^{\rho_s,\rhohat}}_{W^{1,p}} \norm{\left(
\thetahat(\rho_s)\rhohat\right)}_{W^{2,p}}
\leq
\mathfrak{p}(C, \norm{\rho}_{W^{1,p}}, \norm{\rhohat}_{W^{1,p}})
\norm{\rhohat}_{W^{1,p}}\norm{\left(
\thetahat(\rho_s)\rhohat\right)}_{W^{2,p}}\\
&\qquad\leq
\mathfrak{p}(C, \norm{\rho}_{W^{1,p}}, \norm{\rhohat}_{W^{1,p}})
\norm{\rhohat}_{W^{1,p}}\\
&\qquad\qquad \cdot \left(\norm{\thetahat(\rho_s)}_{L^\infty}\norm{\rhohat}_{W^{2,p}} +
\norm{\thetahat(\rho_s)}_{W^{2,p}}\norm{\rhohat}_{L^\infty}\right)\\
&\qquad\leq
\mathfrak{p}(C, \norm{\rho}_{W^{1,p}}, \norm{\rhohat}_{W^{1,p}})
\norm{\rhohat}_{W^{1,p}} \norm{\rhohat}_{W^{2,p}}\\
&\qquad\qquad +\mathfrak{p}(C, \norm{\rho}_{W^{1,p}},
\norm{\rhohat}_{W^{1,p}}) \norm{\rhohat}_{W^{1,p}}\norm{\rhohat}_{L^\infty}(1 + \norm{\rho_s}_{W^{2,p}}(1 +
\norm{\rho_s}_{L^\infty}))\\
&\qquad \leq
\mathfrak{p} \left(C, \norm{\rho}_{W^{1,p}}, \norm{\rhohat}_{W^{1,p}}
\right)\norm{\rhohat}_{W^{1,p}} \norm{\rhohat}_{W^{2,p}}\\
&\qquad \qquad +
\mathfrak{p}(C, \norm{\rho}_{W^{1,p}}, \norm{\rhohat}_{W^{1,p}})\norm{\rho}_{W^{2,p}} \norm{\rhohat}_{W^{1,p}}^2
\end{split}
\end{equation*}
and
\begin{equation*}
\begin{split}
&\norm{*^{\rho_s}}_{W^{1,p}}\norm{\left( \widehat{\thetahat}(\rho_s)
\left(\rhohat ,\rhohat\right)\right)}_{W^{2,p}} \leq \mathfrak{p}(C,
\norm{\rho}_{W^{1,p}}, \norm{\rhohat}_{W^{1,p}}) \\
&\qquad \cdot \left(
\norm{\widehat{\thetahat}(\rho_s)}_{W^{2,p}}\norm{\rhohat}_{L^\infty}^2 +
\norm{\widehat{\thetahat}(\rho_s)}_{L^\infty}\norm{\rhohat}_{W^{2,p}}
\norm{\rhohat}_{L^\infty} \right)\\
&\qquad \leq
\mathfrak{p}(C, \norm{\rho}_{W^{1,p}}, \norm{\rhohat}_{W^{1,p}})
\norm{\rhohat}_{W^{1,p}} \norm{\rhohat}_{W^{2,p}}\\
&\qquad\qquad
+ \mathfrak{p}(C,
\norm{\rho}_{W^{1,p}}, \norm{\rhohat}_{W^{1,p}})\norm{\rhohat}^2_{L^\infty}(1 + \norm{\rho_s}_{W^{2,p}}(1 +
\norm{\rho_s}_{L^\infty}))\\
&\qquad \leq
\mathfrak{p} \left(C, \norm{\rho}_{W^{1,p}}, \norm{\rhohat}_{W^{1,p}}
\right)\norm{\rhohat}_{W^{1,p}} \norm{\rhohat}_{W^{2,p}}\\
&\qquad \qquad +
\mathfrak{p}(C, \norm{\rho}_{W^{1,p}}, \norm{\rhohat}_{W^{1,p}})\norm{\rho}_{W^{2,p}} \norm{\rhohat}_{W^{1,p}}^2.
\end{split}
\end{equation*}
This proves part (i).

We prove statement (ii). By~\eqref{eq:proofQE1},
\begin{equation*}
\begin{split}
&d*^{\rho_1}d\theta^{\rho_1} - d*^{\rho_2} d\theta^{\rho_2} - L_\rho \left(
\rhohat_1 - \rhohat_2 \right)\\
&\qquad =
d*^{\rho_1}d\theta^{\rho_1} - d*^\rho d \theta^\rho - L_\rho \rhohat_1 -
\left(d*^{\rho_2} d\theta^{\rho_2} - d*^\rho d \theta^\rho - L_\rho
\rhohat_2\right)\\
&\qquad =
\int_0^1\int_0^{s}\left(
\frac{d^2}{d^2s'}d*^{\rho_{1,s'}}d\theta^{\rho_{1,s'}}
-
\frac{d^2}{d^2s'}d*^{\rho_{2,s'}}d\theta^{\rho_{2,s'}}\right) ds' ds,
\end{split}
\end{equation*}
where
$$
\rho_{i,s'} := \rho + s' \rhohat_i
$$
for $i=1,2$. It follows from equation~\eqref{eq:square} that we need to
estimate the three terms
\begin{gather}\label{eq:quadTerm1}
\norm{\widehat{\starhat}^{\rho_{1,s},\rhohat_1, \rhohat_1}
d\theta^{\rho_{1,s}} - \widehat{\starhat}^{\rho_{2,s},\rhohat_2, \rhohat_2}
d\theta^{\rho_{2,s}}}_{W^{1,p}},\\
\label{eq:quadTerm2}
\norm{\starhat^{\rho_{1,s}, \rhohat_1} d \left(
\thetahat(\rho_{2,s})\rhohat_2 \right) - \starhat^{\rho_{2,s}, \rhohat_2} d \left(
\thetahat(\rho_{2,s})\rhohat_2 \right)}_{W^{1,p}},\\
\label{eq:quadTerm3}
\norm{*^{\rho_{1,s}}d \left( \widehat{\thetahat}(\rho_{1,s})\left(
\rhohat_1,\rhohat_2
\right) \right) - *^{\rho_{2,s}}d \left( \widehat{\thetahat}(\rho_{2,s})\left(
\rhohat_2,\rhohat_2
\right) \right)}_{W^{1,p}}
\end{gather}
in the specified way. We estimate the first term by
\begin{equation*}
\begin{split}
&\norm{\widehat{\starhat}^{\rho_{1,s},\rhohat_1, \rhohat_1}
d\theta^{\rho_{1,s}} - \widehat{\starhat}^{\rho_{2,s},\rhohat_2, \rhohat_2}
d\theta^{\rho_{2,s}}}_{W^{1,p}}\\
&\quad\leq
\norm{\widehat{\starhat}^{\rho_{1,s},\rhohat_1, \rhohat_1}
d\theta^{\rho_{1,s}} - \widehat{\starhat}^{\rho_{2,s},\rhohat_2, \rhohat_2}
d\theta^{\rho_{1,s}}}_{W^{1,p}}\\
&\qquad\qquad +
\norm{\widehat{\starhat}^{\rho_{2,s},\rhohat_2, \rhohat_2}
d\theta^{\rho_{1,s}} - \widehat{\starhat}^{\rho_{2,s},\rhohat_2,
\rhohat_2} d\theta^{\rho_{2,s}}}_{W^{1,p}}\\
&\quad\leq
\norm{\widehat{\starhat}^{\rho_{1,s},\rhohat_1, \rhohat_1} -
\widehat{\starhat}^{\rho_{2,s},\rhohat_2, \rhohat_2}}_{W^{1,p}}\norm{
\theta^{\rho_{1,s}}}_{W^{2,p}}
+
\norm{\widehat{\starhat}^{\rho_{2,s},\rhohat_2,
\rhohat_2}}_{W^{1,p}}
\norm{\theta^{\rho_{1,s}} - \theta^{\rho_{2,s}}}_{W^{2,p}}\\
\end{split}
\end{equation*}
Then,
\begin{equation*}
\begin{split}
&\norm{\widehat{\starhat}^{\rho_{1,s},\rhohat_1, \rhohat_1} -
\widehat{\starhat}^{\rho_{2,s},\rhohat_2, \rhohat_2}}_{W^{1,p}}\\
&\quad\leq
\norm{\widehat{\starhat}^{\rho_{1,s},\rhohat_1, \rhohat_1} -
\widehat{\starhat}^{\rho_{2,s},\rhohat_1, \rhohat_1} +
\widehat{\starhat}^{\rho_{2,s},\rhohat_1, \rhohat_1}
-\widehat{\starhat}^{\rho_{2,s},\rhohat_2, \rhohat_1} +
\widehat{\starhat}^{\rho_{2,s},\rhohat_2, \rhohat_1}-
\widehat{\starhat}^{\rho_{2,s},\rhohat_2, \rhohat_2}}_{W^{1,p}}\\
&\quad\leq
\norm{\widehat{\starhat}^{\rho_{1,s}} -
\widehat{\starhat}^{\rho_{2,s}}}_{W^{1,p}}\norm{\rhohat_1}_{W^{1,p}}^2\\
&\quad\quad +
\mathfrak{p}\left(C, \norm{\rho}_{W^{1,p}}, \norm{\rhohat_1}_{W^{1,p}},
\norm{\rhohat_2}_{W^{1,p}} \right)\left( \norm{\rhohat_1}_{W^{1,p}} +
\norm{\rhohat_2}_{W^{1,p}}\right)\norm{\rhohat_1 -
\rhohat_2}_{W^{1,p}}\\
&\quad \leq
\mathfrak{p}\left(C, \norm{\rho}_{W^{1,p}}, \norm{\rhohat_1}_{W^{1,p}},
\norm{\rhohat_2}_{W^{1,p}} \right)\left( \norm{\rhohat_1}_{W^{1,p}} +
\norm{\rhohat_2}_{W^{1,p}}\right)\norm{\rhohat_1 -
\rhohat_2}_{W^{1,p}},
\end{split}
\end{equation*}
using
\begin{equation*}
\begin{split}
\norm{\widehat{\starhat}^{\rho_{1,s}} -
\widehat{\starhat}^{\rho_{2,s}}}_{W^{1,p}}
&\leq
\norm{\int_0^1
\frac{d}{ds'}\widehat{\starhat}^{\left(s'\rho_{1,s} + (1-s')
\rho_{2,s}\right)}\, ds'}_{W^{1,p}}\\
&\leq
\mathfrak{p}\left( C, \norm{\rho}_{W^{1,p}}, \norm{\rhohat_1}_{W^{1,p}},
\norm{\rhohat_2}_{W^{1,p}} \right) \norm{\rhohat_1 - \rhohat_2}_{W^{1,p}}.
\end{split}
\end{equation*}
Further,
\begin{equation*}
\begin{split}
\norm{\theta^{\rho_{1,s}}}_{W^{2,p}}
&\leq \mathfrak{p}\left( C,
\norm{\rho}_{L^\infty}, \norm{\rhohat_1}_{L^\infty}
\right)\left(1 + \norm{\rho_{1,s}}_{W^{2,p}}\left(1 +
\norm{\rho_{1,s}}_{L^\infty}\right)\right)\\
&\leq
\fp \left(C, \norm{\rho}_{L^\infty} ,
\norm{\rhohat_1}_{L^\infty}\right)\\
&\qquad
+ \fp \left(, \norm{\rho}_{L^\infty} ,
\norm{\rhohat_1}_{L^\infty}\right) \left( \norm{\rho}_{W^{2,p}} +
\norm{\rhohat_1}_{W^{2,p}}\right)
\end{split}
\end{equation*}
and
\begin{equation*}
\norm{\widehat{\starhat}^{\rho_{2,s}, \rhohat_2, \rhohat_2}}_{W^{1,p}}\leq
\mathfrak{p}\left( C, \norm{\rho}_{W^{1,p}}, \norm{\rhohat_2}_{W^{1,p}}
\right)\norm{\rhohat_2}^{2}_{W^{1,p}}.
\end{equation*}
Further,
\begin{equation*}
\begin{split}
&\norm{\theta^{\rho_{1,s}} - \theta^{\rho_{2,s}}}_{W^{2,p}}
\leq
\norm{\int_0^{1} \frac{d}{ds'}\theta^{s' \rho_{1,s} + (1 - s')
\rho_{2,s}}\, ds'}_{W^{2,p}}\\
&\quad\leq
\sup_{s' \in [0,1]}\bigg(\norm{\thetahat(s' \rho_{1,s} + (1 - s')
\rho_{2,s})}_{L^\infty}\norm{\rhohat_1 - \rhohat_2}_{W^{2,p}}\\
&\qquad +
\norm{\thetahat(s' \rho_{1,s} + (1 - s')
\rho_{2,s})}_{W^{2,p}}\norm{\rhohat_1 - \rhohat_2}_{L^\infty}\bigg)\\
&\quad\leq
\mathfrak{p}\left( C, \norm{\rho}_{L^\infty}, \norm{\rhohat_1}_{L^\infty},
\norm{\rhohat_2}_{L^\infty}\right)\norm{\rhohat_1 -
\rhohat_2}_{W^{2,p}}\\
&\qquad +
\mathfrak{p}\left( C, \norm{\rho}_{L^\infty}, \norm{\rhohat_1}_{L^\infty},
\norm{\rhohat_2}_{L^\infty}\right)\norm{\rhohat_1 - \rhohat_2}_{L^\infty}\\
&\quad\quad
\cdot \sup_{s' \in [0,1]}\left(1 + \norm{s' \rho_{1,s} + (1 - s')
\rho_{2,s}}_{W^{2,p}} \left( 1 +
\norm{s' \rho_{1,s} + (1 - s')
\rho_{2,s}}_{L^\infty}\right)\right)\\
&\quad\leq
\mathfrak{p}\left( C, \norm{\rho}_{L^\infty}, \norm{\rhohat_1}_{L^\infty},
\norm{\rhohat_2}_{L^\infty}\right)\norm{\rhohat_1 -
\rhohat_2}_{W^{2,p}}\\
&\qquad +
\mathfrak{p}\left( C, \norm{\rho}_{L^\infty}, \norm{\rhohat_1}_{L^\infty},
\norm{\rhohat_2}_{L^\infty}\right) \norm{\rhohat_1 -
\rhohat_2}_{L^\infty}\\
&\qquad\qquad
\cdot \left(\norm{\rho}_{W^{2,p}} + \norm{\rhohat_1}_{W^{2,p}} +
\norm{\rhohat_2}_{W^{2,p}}\right)\\
\end{split}
\end{equation*}
Combining these inequalities yields
\begin{equation*}
\begin{split}
&\norm{\widehat{\starhat}^{\rho_{1,s},\rhohat_1, \rhohat_1}
d\theta^{\rho_{1,s}} - \widehat{\starhat}^{\rho_{2,s},\rhohat_2, \rhohat_2}
d\theta^{\rho_{2,s}}}_{W^{1,p}}\\
&\qquad\leq
\mathfrak{p}\left(C, \norm{\rho}_{W^{1,p}},
\norm{\rhohat_1}_{W^{1,p}}, \norm{\rhohat_2}_{W^{1,p}}\right) \left(
\norm{\rhohat_1}_{W^{1,p}} + \norm{\rhohat_2}_{W^{1,p}}
\right)
\norm{\rhohat_1 - \rhohat_2}_{W^{2,p}}\\
&\quad\qquad +
\mathfrak{p}\left(C, \norm{\rho}_{W^{1,p}},
\norm{\rhohat_1}_{W^{1,p}}, \norm{\rhohat_2}_{W^{1,p}}\right)\left(
\norm{\rho}_{W^{2,p}} + \norm{\rhohat_1}_{W^{2,p}} +
\norm{\rhohat_2}_{W^{2,p}}
\right)\\
&\qquad \qquad \qquad
\cdot \left(\norm{\rhohat_1}_{W^{1,p}} +
\norm{\rhohat_2}_{W^{1,p}}\right)\norm{\rhohat_1 - \rhohat_2}_{W^{1,p}}
\end{split}
\end{equation*}
The terms~\eqref{eq:quadTerm2} and~\eqref{eq:quadTerm3} can be estimated with the same
techniques. This proves statement (ii) and the lemma.
\end{proof}

We will need the following lemma to use the technique of freezing the
coefficients for the operator $ d\frac{d^{*^\rho}}{u}: d\Om^1 \to d \Om^1$.
\begin{lemma}[{\bf Freezing the Coefficients}]\label{lem:freezingTheCoefficients}
Let $p>4$. There exists a polynomial with positive coefficients
$\mathfrak{p}$ with the following significance. Let $\rho_0 \in B^{1,p}_2(M,
\Lambda^2)$ be a nondegenerate 2-form and let
$$
\rhotilde_0 \in W^{1,2}(I, L^p(M,\Lambda^2)) \cap L^2(I, W^{2,p}(M,
\Lambda^2))
$$
be a path of nondegenerate 2-forms such that it is a continous extension of
$\rho_0$ with
$\rhotilde_0(t=0,\cdot) = \rho_0$. Then
\begin{multline*}
\norm{(d\frac{d^{*^{\rhotilde_0}}}{\tilde{u}_0} - d\frac{d^{*^{\rho_0}}}{u_0}
)\rhohat}_{L^2(I, L^p)}\\
\leq
\mathfrak{p}(C, \norm{\rhotilde_0}_{L^\infty(I, W^{1,p})},
\norm{\rho_0}_{W^{1,p}}, T) \norm{\rhohat}_{L^2(I, W^{2,p})}
\norm{\rhotilde_0 - \rho_0}_{L^\infty(I, W^{1,p})},
\end{multline*}
where
$$
C:= \sup_{\substack{(t,x)\in I\times M \\ s\in [0,1]}} \frac{1}{u_{s
\rhotilde_0(t,x) + (1-s) \rho_0}},\qquad I :=
[0, T).
$$
\end{lemma}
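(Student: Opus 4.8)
The plan is to write the difference operator as the exterior derivative of a one-form and to reduce everything to a pointwise coefficient estimate along the segment joining $\rho_0$ and $\rhotilde_0$, combined with the product estimates of Lemma~\ref{lem:sobolevProd} — the same circle of ideas as in Lemma~\ref{lem:quadraticEstimate}. Concretely, I would first observe that
\[
\Bigl(d\tfrac{d^{*^{\rhotilde_0}}}{\tilde u_0}-d\tfrac{d^{*^{\rho_0}}}{u_0}\Bigr)\rhohat=d\,\zeta,\qquad
\zeta:=\tfrac{1}{\tilde u_0}d^{*^{\rhotilde_0}}\rhohat-\tfrac{1}{u_0}d^{*^{\rho_0}}\rhohat,
\]
so that $\norm{d\,\zeta}_{L^2(I,L^p)}\le\norm{\zeta}_{L^2(I,W^{1,p})}$ and it suffices to bound $\zeta$. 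Since $\rhohat$ is exact, $d^{*^\rho}\rhohat=-*^\rho d(*^\rho\rhohat)$ is a first order operator in $\rhohat$ whose coefficients are polynomials in $\rho$ and $\tfrac1u$ with coefficients built from the fixed background geometry, and which is affine-linear of degree one in $\nabla\rho$ — exactly the algebraic structure already isolated in the proof of Theorem~\ref{thm:K}\,(iii). Thus $\tfrac1u d^{*^\rho}\rhohat=A(\rho,\tfrac1u)\cdot\nabla\rhohat+B(\rho,\tfrac1u)\cdot\nabla\rho\cdot\rhohat$ for polynomials $A,B$, and
\[
\zeta=\bigl[A(\rhotilde_0)-A(\rho_0)\bigr]\cdot\nabla\rhohat
+\bigl[B(\rhotilde_0)\nabla\rhotilde_0-B(\rho_0)\nabla\rho_0\bigr]\cdot\rhohat .
\]

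The next step is the coefficient difference estimate. Writing $\rho_s:=s\rhotilde_0+(1-s)\rho_0$ and using the fundamental theorem of calculus, $A(\rhotilde_0)-A(\rho_0)=\int_0^1\tfrac{d}{ds}A(\rho_s,\tfrac1{u_s})\,ds$, and $\tfrac{d}{ds}A(\rho_s,\tfrac1{u_s})$ is again a polynomial in $\rho_s$ and $\tfrac1{u_s}$ (the $\tfrac1{u_s}$-derivatives produce extra powers of $\tfrac1{u_s}$, controlled by $C$) multiplied by $\rhotilde_0-\rho_0$. Since $p>4$, $W^{1,p}\hookrightarrow C^0$ and $W^{1,p}$ is a Banach algebra, so by Lemma~\ref{lem:sobolevProd},
\[
\norm{A(\rhotilde_0)-A(\rho_0)}_{W^{1,p}}\le\mathfrak{p}\bigl(C,\norm{\rhotilde_0}_{W^{1,p}},\norm{\rho_0}_{W^{1,p}}\bigr)\norm{\rhotilde_0-\rho_0}_{W^{1,p}},
\]
and the same for $B(\rhotilde_0)-B(\rho_0)$. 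Taking the supremum over $t\in I$ and using that the inclusion $W^{1,2,p}(M_I)\hookrightarrow C^0(I,B^{1,p}_2)\hookrightarrow C^0(I,W^{1,p})$ is bounded (the Besov remark), these become bounds in $L^\infty(I,W^{1,p})$ by $\mathfrak{p}(C,\norm{\rhotilde_0}_{L^\infty(I,W^{1,p})},\norm{\rho_0}_{W^{1,p}})\norm{\rhotilde_0-\rho_0}_{L^\infty(I,W^{1,p})}$; in particular $\nabla\rhotilde_0\in L^\infty(I,L^p)$ with norm $\le\norm{\rhotilde_0}_{L^\infty(I,W^{1,p})}$. For the second bracket I would split $B(\rhotilde_0)\nabla\rhotilde_0-B(\rho_0)\nabla\rho_0=[B(\rhotilde_0)-B(\rho_0)]\nabla\rhotilde_0+B(\rho_0)\nabla(\rhotilde_0-\rho_0)$, bounding the first summand in $L^\infty(I,L^p)$ by the coefficient estimate times $\norm{\nabla\rhotilde_0}_{L^\infty(I,L^p)}$ and the second by $\norm{B(\rho_0)}_{C^0}\norm{\nabla(\rhotilde_0-\rho_0)}_{L^\infty(I,L^p)}$; both are $\le\mathfrak{p}(\dots)\norm{\rhotilde_0-\rho_0}_{L^\infty(I,W^{1,p})}$.

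Assembling $\zeta$ via Lemma~\ref{lem:sobolevProd}, with $\nabla\rhohat$ and $\rhohat$ both in $L^2(I,W^{1,p})$ (each $\le\norm{\rhohat}_{L^2(I,W^{2,p})}$), the coefficient factor measured in $L^\infty$ in time and the $\rhohat$-factor in $L^2$ in time, yields
\[
\norm{\zeta}_{L^2(I,W^{1,p})}\le\mathfrak{p}\bigl(C,\norm{\rhotilde_0}_{L^\infty(I,W^{1,p})},\norm{\rho_0}_{W^{1,p}},T\bigr)\norm{\rhohat}_{L^2(I,W^{2,p})}\norm{\rhotilde_0-\rho_0}_{L^\infty(I,W^{1,p})},
\]
the $T$ entering only through H\"older's inequality in time when one interchanges $L^\infty(I,\cdot)$ and $L^2(I,\cdot)$; together with $\norm{d\,\zeta}_{L^2(I,L^p)}\le\norm{\zeta}_{L^2(I,W^{1,p})}$ this is the claim. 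I expect the main obstacle to be not the algebra — which is routine once the structure of $d\frac{d^{*^\rho}}{u}$ from Theorem~\ref{thm:K}\,(iii) is in hand — but the fact that the coefficients are only $W^{1,p}$-regular and $\rho_0\in B^{1,p}_2$ is not twice differentiable, so that the two operators are individually defined only in the weak (divergence-form) sense and $\zeta$ need not literally lie in $W^{1,p}$ before exploiting the cancellation in the $\rho_0$-term. This is handled exactly as in the proofs of Lemma~\ref{lem:ellipticRegularity} and Lemma~\ref{lem:maximalRegularity}: prove the estimate first for smooth nondegenerate $\rho_0$ and $\rhotilde_0$, where every step above is classical, then approximate $\rho_0$ in $W^{1,p}$ and $\rhotilde_0$ in $W^{1,2,p}(M_I)$ and pass to the limit, noting that the right-hand side involves only $W^{1,p}$- and $W^{2,p}$-type norms and that the distributional difference operator depends continuously on the data. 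Getting this density step and the $L^\infty$-versus-$L^2$ (in time) and $C^0$-versus-$L^p$ (in space) bookkeeping right is where the real work lies.
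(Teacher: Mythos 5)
Your reduction to coefficient estimates along the segment $s\mapsto s\rhotilde_0+(1-s)\rho_0$, the use of Lemma~\ref{lem:sobolevProd} together with $W^{1,p}\hookrightarrow C^0$ for $p>4$, and the $L^\infty$-in-time versus $L^2$-in-time bookkeeping are exactly the ingredients of the paper's proof. The gap is in your first step. You estimate the operator difference by $\norm{d\zeta}_{L^2(I,L^p)}\le\norm{\zeta}_{L^2(I,W^{1,p})}$ after expanding $\tfrac1u d^{*^\rho}\rhohat=A(\rho,\tfrac1u)\cdot\nabla\rhohat+B(\rho,\tfrac1u)\cdot\nabla\rho\cdot\rhohat$, but your own estimates control the bracket $B(\rhotilde_0)\nabla\rhotilde_0-B(\rho_0)\nabla\rho_0$ only in $L^p$, not in $W^{1,p}$ --- and its $W^{1,p}$-norm genuinely cannot be bounded by the right-hand side of the lemma: one more spatial derivative produces $B(\rhotilde_0)\nabla^2\rhotilde_0-B(\rho_0)\nabla^2\rho_0$, where $\rho_0\in B^{1,p}_2$ has no second derivatives at all, $\nabla^2\rhotilde_0$ lies only in $L^2(I,L^p)$, and neither term comes multiplied by $\norm{\rhotilde_0-\rho_0}_{L^\infty(I,W^{1,p})}$. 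So the asserted bound on $\norm{\zeta}_{L^2(I,W^{1,p})}$ is false already for smooth data, and the smoothing-and-passage-to-the-limit you invoke at the end cannot repair it: approximation transfers a uniform estimate, and here the uniform estimate is precisely what is missing.

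What makes the lemma true --- and what the paper does --- is to never take the outer derivative off the divergence structure. In a coordinate chart both operators have the form $f_{\rho}\,\p_i\bigl(h_{\rho}\,\p_j\rhohat_{ab}\bigr)$ with $f_\rho,h_\rho$ polynomial in $\rho$ and $\tfrac1u$; the difference is split as $(f_{\rhotilde_0}-f_{\rho_0})\,\p_i\bigl(h_{\rhotilde_0}\p_j\rhohat\bigr)+f_{\rho_0}\,\p_i\bigl((h_{\rhotilde_0}-h_{\rho_0})\p_j\rhohat\bigr)$, the first summand is bounded via $\norm{f_{\rhotilde_0}-f_{\rho_0}}_{L^\infty}$, and the second via $\norm{f_{\rho_0}}_{L^\infty}\norm{(h_{\rhotilde_0}-h_{\rho_0})\p_j\rhohat}_{W^{1,p}}$, so the product rule differentiates the coefficient difference exactly once and only $\norm{h_{\rhotilde_0}-h_{\rho_0}}_{W^{1,p}}\le\mathfrak{p}(C,\norm{\rhotilde_0}_{W^{1,p}},\norm{\rho_0}_{W^{1,p}})\norm{\rhotilde_0-\rho_0}_{W^{1,p}}$ and $\norm{\rhohat}_{W^{2,p}}$ enter. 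With that replacement your coefficient-difference estimates and your time integration go through essentially verbatim; it is only the detour through $\norm{\zeta}_{W^{1,p}}$ that must be avoided.
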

\begin{proof}
For ease of notation we don't number the different polynomials appearing in
the following estimates. Let $\rhohat\in d\Om^2(M)$.  In a local
coordinate chart the expression $ d\frac{d^{*^{\rho_0}}}{u_0} \rhohat$ is
given by
$$
\frac{1}{\sqrt{\Abs{\det g^{\rho_0}}}} \sum_{a < b}\sum_{ij} \p_i
\left(\frac{\sqrt{\Abs{\det g^{\rho_0}}}}{u_0} (g^{\rho_0})^{ij}_{ab}\p_j
\rhohat_{ab}\right)  dx^a \wedge dx^b,
$$
where $(g^{\rho_0})^{cd}_{ab}$ are the coefficients of the inverse of the
metric on the two-forms induced by $g^{\rho_0}$. Let
\begin{gather*}
f_{\rho_0} :=\frac{1}{\sqrt{\Abs{\det g^{\rho_0}}}}, \qquad
h_{\rho_0}:=\frac{\sqrt{\Abs{\det g^{\rho_0}}}}{u_0} (g^{\rho_0})^{ij}_{ab}.
\end{gather*}
Let $\rhotilde_0$ be a continuous extension of $\rho_0$ in $W^{1,2}(I,
L^p) \cap L^2(I, W^{2,p})$. For a fixed $t\in I$ we estimate
\begin{equation*}
\begin{split}
&\norm{f_{\rhotilde_0} \p_i \left(h_{\rhotilde_0} \p_j \rhohat_{ab}\right)
- f_{\rho_0} \p_i \left(h_{\rho_0} \p_j \rhohat_{ab}\right)}_{L^p}\\
&\quad\leq
\norm{\left(f_{\rhotilde_0} - f_{\rho_0}\right)\p_i \left(h_{\rhotilde_0}
\p_j \rhohat_{ab}\right) }_{L^p}
+ \norm{f_{\rho_0} \left(\p_i \left(h_{\rhotilde_0} \p_j \rhohat_{ab}\right)
- \p_i \left(h_{\rho_0} \p_j \rhohat_{ab}\right)\right)}_{L^p}\\
\end{split}
\end{equation*}
Then
\begin{equation*}
\begin{split}
&\norm{\left(f_{\rhotilde_0} - f_{\rho_0}\right)\p_i
\left(h_{\rhotilde_0} \p_j \rhohat_{ab}\right)}_{L^p}\\
&\qquad\leq
\norm{f_{\rhotilde_0} - f_{\rho_0}}_{L^\infty} \norm{
\left(h_{\rhotilde_0} \p_j \rhohat_{ab}\right)}_{W^{1,p}}\\
&\qquad\leq
\norm{f_{\rhotilde_0} - f_{\rho_0}}_{L^\infty} \norm{
h_{\rhotilde_0}}_{W^{1,p}} \norm{\rhohat_{ab}}_{W^{2,p}}\\
&\qquad\leq
\fp\left(C, \norm{\rhotilde_0}_{L^\infty},
\norm{\rho_0}_{L^\infty}\right) \norm{\rhotilde_0 - \rho_0}_{L^\infty}
\norm{\rhotilde_0}_{W^{1,p}}\norm{\rhohat}_{W^{2,p}}
\end{split}
\end{equation*}
where
$$
C:= \sup_{\substack{(t,x)\in [0,T)\times M \\ s\in [0,1]}} \frac{1}{u_{s
\rhotilde_0(t,x) + (1-s) \rho_0}}
$$
and $\mathfrak{p}$ is a polynomial with positive coefficients in the
indicated variables. In the same way we find that
\begin{multline*}
\norm{f_{\rho_0} \left(\p_i \left(h_{\rhotilde_0} \p_j \rhohat_{ab}\right)
- \p_i \left(h_{\rho_0} \p_j \rhohat_{ab}\right)\right)}_{L^p}\\
\leq
\mathfrak{p}\left( C, \norm{\rhotilde_0}_{W^{1,p}},
\norm{\rho_0}_{W^{1,p}}
\right)\norm{h_{\rhotilde_0} -
h_{\rho_0}}_{W^{1,p}}\norm{\rhohat}_{W^{2,p}}.
\end{multline*}
Hence,
\begin{equation*}
\begin{split}
&\norm{f_{\rhotilde_0} \p_i \left(h_{\rhotilde_0} \p_j \rhohat_{ab}\right)
- f_{\rho_0} \p_i \left(h_{\rho_0} \p_j \rhohat_{ab}\right)}_{L^2(I,
L^p)}\\
&\qquad\leq
\mathfrak{p}\left( C, \norm{\rhotilde_0}_{L^\infty(I, W^{1,p})},
\norm{\rho_0}_{W^{1,p}}, T \right)\\
&\qquad\qquad \cdot
\norm{\rhohat}_{L^2(I, W^{2,p})} \left(\norm{f_{\rhotilde_0} -
f_{\rho_0}}_{L^\infty(I, W^{1,p})} + \norm{h_{\rhotilde_0} -
h_{\rho_0}}_{L^\infty(I, W^{1,p})}\right).
\end{split}
\end{equation*}
The functions $f_\rho$ and $h_\rho$ depend smoothly on the point wise values
of $\rho$, in fact they are polynomials in the variables $\rho$ and
$\frac{1}{u}$. Therefore there exists a polynomial $\mathfrak{p}$ such that
\begin{multline*}
\norm{f_{\rhotilde_0} - f_{\rho_0}}_{L^\infty(I, W^{1,p})} +
\norm{h_{\rhotilde_0} - h_{\rho_0}}_{L^\infty(I, W^{1,p})}\\
\leq
\mathfrak{p}\left( C,
\norm{\rho_0}_{W^{1,p}}\norm{\rhotilde_0}_{L^\infty(I, W^{1,p})} \right)
\norm{\rhotilde_0 - \rho_0}_{L^\infty(I, W^{1,p})}.
\end{multline*}
This proves the lemma.
\end{proof}
The following lemma gives an estimate for the lower order term appearing in the
linearization of the gradient.
\begin{lemma}[{\bf Lower Order Terms}]\label{lem:lowerOrderTerms} Let $p>4$.
There exist polynomials $\mathfrak{p}_1, \mathfrak{p}_2$ with positive
coefficients with the following significance. Let $\rho\in W^{1,2}(I,
L^p(M, \Lambda^2)) \cap L^2(I, W^{2,p}(M, \Lambda^2))$ be a path of nondegenerate two-forms and let
$A^{\rho}: d\Om^1 \to d\Om^1$ be the operator defined by~\eqref{eq:Arho}.
Then for all $\rhohat\in W^{1,2}(I, L^p(M, \Lambda^2)) \cap L^2(I,
W^{2,p}(M, \Lambda^2))$,
\begin{equation*}
\begin{split}
\norm{A^\rho \rhohat}_{L^2(I,L^p)}
&\leq
\bigg(T^{\frac12} \mathfrak{p}_1\left( C, \norm{\rho}_{L^\infty(I,
W^{1,p})} \right)\\
&\qquad
+ \mathfrak{p}_2\left( C, \norm{\rho}_{L^\infty(I, W^{1,p})} \right)
\norm{\rho}_{L^2(I,W^{2,p})}\bigg)\norm{\rhohat}_{L^\infty(I, W^{1,p})},
\end{split}
\end{equation*}
where
$$
I := [0, T), \qquad C:= \sup_{(t,x)\in I\times M} \frac{1}{u(t,x)},\qquad u
:= \frac{\rho\wedge\rho}{2\dvol}.
$$
\end{lemma}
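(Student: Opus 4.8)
The plan is to read off from the explicit formula~\eqref{eq:Arho} that $A^\rho$ is a first order differential operator in $\rhohat$ whose coefficients are bilinear in the first derivatives of $\rho$ and in a polynomial of $\tfrac1u$ and $\rho$ with smooth coefficient functions on $M$, and then to feed this structure into the product and composition estimates of Lemma~\ref{lem:sobolevProd}. The nondegeneracy hypothesis enters only through $C=\sup_{I\times M}\tfrac1u<\infty$, which makes these polynomial coefficients well defined and bounded.

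First I would put $A^\rho\rhohat$ into the algebraic normal form $A^\rho\rhohat=\sum_{\ell} d\,G_\ell^\rho(\rhohat)$, where each $G_\ell^\rho(\rhohat)$ is a one-form of the shape $\mathfrak{c}_\ell(\tfrac1u,\rho)\cdot(\nabla\rho)\cdot\rhohat$: algebraic and pointwise in $\rho$, $\nabla\rho$ and $\rhohat$, linear in $\nabla\rho$ and in $\rhohat$, with $\mathfrak{c}_\ell$ a polynomial in the two variables $\tfrac1u$ and $\rho$ with $\nabla$-parallel (hence smooth, bounded) coefficient functions. This is immediate from~\eqref{eq:Arho}: the one-forms $\tfrac{du}{u^2}$, $d\Abs{\tfrac{\rho^+}{u}}^2$ and $d\theta^\rho$ are each of the form $\mathfrak{c}(\tfrac1u,\rho)\nabla\rho$ (differentiate the rational expressions $u$, $\Abs{\tfrac{\rho^+}{u}}^2$, $\theta^\rho$ in $\rho$, using that $\rho^+$ is a parallel linear function of $\rho$); the pointwise operators $*^\rho$ and $\hat{*}^{\rho,\rhohat}$ depend rationally on $\rho$, so $\hat{*}^{\rho,\rhohat}$ is algebraic and linear in $\rhohat$; and inside the outer $d$ the form $\rhohat$ occurs only linearly and undifferentiated. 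Christoffel contributions of the outer $d$ are of the same type and are absorbed into the constant below. This is the same bookkeeping as in the proof of Theorem~\ref{thm:K} (iii).

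Next I would estimate fibrewise in time: $\norm{A^\rho\rhohat}_{L^p}\le c\sum_\ell\norm{G_\ell^\rho(\rhohat)}_{W^{1,p}}\le c\sum_\ell\bigl(\norm{G_\ell^\rho(\rhohat)}_{L^p}+\norm{\nabla G_\ell^\rho(\rhohat)}_{L^p}\bigr)$. Since $p>4$ and $\dim M=4$, $W^{1,p}(M)\hookrightarrow C^0(M)$, so Lemma~\ref{lem:sobolevProd} bounds $\norm{\mathfrak{c}_\ell(\tfrac1u,\rho)}_{L^\infty}$ and $\norm{\mathfrak{c}_\ell(\tfrac1u,\rho)}_{W^{1,p}}$ by a polynomial in $(C,\norm{\rho}_{W^{1,p}})$. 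H\"older's inequality then gives $\norm{G_\ell^\rho(\rhohat)}_{L^p}\le\norm{\mathfrak{c}_\ell}_{L^\infty}\norm{\nabla\rho}_{L^p}\norm{\rhohat}_{L^\infty}\le\mathfrak{q}(C,\norm{\rho}_{W^{1,p}})\norm{\rhohat}_{W^{1,p}}$, using \emph{only} $W^{1,p}$-regularity of $\rho$; and the Leibniz rule together with H\"older and $W^{1,p}\hookrightarrow L^\infty$ applied to $\nabla\rho$ gives $\norm{\nabla G_\ell^\rho(\rhohat)}_{L^p}\le\mathfrak{q}'(C,\norm{\rho}_{W^{1,p}})\norm{\rho}_{W^{2,p}}\norm{\rhohat}_{W^{1,p}}$, handling the term $\nabla\rho\cdot\nabla\rhohat$ by $\norm{\nabla\rho}_{L^\infty}\norm{\nabla\rhohat}_{L^p}$, the term $\nabla^2\rho\cdot\rhohat$ by $\norm{\nabla^2\rho}_{L^p}\norm{\rhohat}_{L^\infty}$, and the $(\nabla\rho)^2\rhohat$ term from $\nabla\mathfrak{c}_\ell$ by $\norm{\nabla\rho}_{L^\infty}\norm{\nabla\rho}_{L^p}\norm{\rhohat}_{L^\infty}$ (the Gagliardo-Nirenberg inequality~\ref{prop:gagliardo} being an alternative here). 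Finally I would take the $L^2(I,\cdot)$ norm and pull $\norm{\rhohat}_{W^{1,p}}$ and the polynomial coefficients out in $L^\infty(I)$: the first bound yields $T^{1/2}\mathfrak{p}_1(C,\norm{\rho}_{L^\infty(I,W^{1,p})})\norm{\rhohat}_{L^\infty(I,W^{1,p})}$ (via $\norm{f}_{L^2(I)}\le T^{1/2}\norm{f}_{L^\infty(I)}$), the second yields $\mathfrak{p}_2(C,\norm{\rho}_{L^\infty(I,W^{1,p})})\norm{\rho}_{L^2(I,W^{2,p})}\norm{\rhohat}_{L^\infty(I,W^{1,p})}$, and summing over $\ell$ gives the claim.

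The main obstacle is the first step: pinning down the algebraic normal form of $A^\rho\rhohat$ — that no second derivative of $\rho$ and no derivative of $\rhohat$ occurs before the outer $d$, and that all coefficients are polynomials in $\tfrac1u$ and $\rho$ — so that Lemma~\ref{lem:sobolevProd} applies and the splitting into a "$W^{1,p}$-only" part (the $T^{1/2}$ term) and a "$W^{2,p}$" part (the $\norm{\rho}_{L^2(I,W^{2,p})}$ term) is legitimate. After that everything is routine H\"older/Sobolev bookkeeping of the same flavour as in Lemma~\ref{lem:quadraticEstimate}; in particular no cancellation among the three summands of~\eqref{eq:Arho} is needed, since the estimate is applied to each $d\,G_\ell^\rho(\rhohat)$ separately.
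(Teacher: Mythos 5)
Your proposal is correct and follows essentially the same route as the paper's proof: drop the outer $d$ to bound $\norm{A^\rho\rhohat}_{L^p}$ at fixed $t$ by the $W^{1,p}$-norms of the three summands of~\eqref{eq:Arho}, use the product/composition estimates of Lemma~\ref{lem:sobolevProd} (with H\"older and $W^{1,p}\hookrightarrow C^0$) to obtain a bound of the form $\mathfrak{p}(C,\norm{\rho}_{W^{1,p}})\bigl(1+\norm{\rho}_{W^{2,p}}\bigr)\norm{\rhohat}_{W^{1,p}}$, and then integrate in time, the $W^{1,p}$-only part giving the $T^{1/2}$ term and the $\norm{\rho}_{W^{2,p}}$ part giving the $\norm{\rho}_{L^2(I,W^{2,p})}$ term. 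The paper is merely terser, citing Lemma~\ref{lem:sobolevProd} directly instead of writing out the normal form $\mathfrak{c}(\tfrac1u,\rho)\,\nabla\rho\,\rhohat$ as you do.
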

\begin{proof}
For a fixed $t\in I$ we can estimate
\begin{equation*}
\begin{split}
\norm{A^\rho \rhohat}_{L^p}
&\leq
\norm{*^\rho\left(
  \left(\frac{du}{u^2}\right)\wedge \left( \rhohat + *^\rho \rhohat
\right)\right)}_{W^{1,p}} + \norm{
  *^\rho \left(d\left( \Abs{\frac{\rho^+}{u}}^2 \right) \wedge
\rhohat\right)}_{W^{1,p}}\\
&\qquad +
\norm{
  \hat{*}^{\rho,\rhohat} d\theta^\rho}_{W^{1,p}}\\
  &\leq
  \mathfrak{p}\left( C, \norm{\rho}_{W^{1,p}} \right)\left(1 +
  \norm{\rho}_{W^{2,p}}\left( 1 +
  \norm{\rho}_{L^\infty} \right)\right)\norm{\rhohat}_{W^{1,p}}
\end{split}
\end{equation*}
using Lemma~\ref{lem:sobolevProd}. Hence,
\begin{equation*}
\begin{split}
  \norm{A^\rho \rhohat}_{L^2(I,L^p)}
  &\leq
  \bigg(T^{\frac12} \mathfrak{p}_1\left( C, \norm{\rho}_{L^\infty(I,
    W^{1,p})} \right)\\
    &\qquad
    + \mathfrak{p}_2\left( C, \norm{\rho}_{L^\infty(I, W^{1,p})} \right)
    \norm{\rho}_{L^2(I,W^{2,p})}\bigg)\norm{\rhohat}_{L^\infty(I, W^{1,p})},
  \end{split}
\end{equation*}
for two polynomials $\mathfrak{p}_1$, $\mathfrak{p}_2$ with positive
coefficients. This proves the lemma.
\end{proof}
We are now ready to give a proof of the local existence
Theorem~\ref{thm:localExistence}.
\begin{proof}[Proof of Theorem \ref{thm:localExistence}]
\label{proof:localExistence}
Define $\rhotilde_0$ by~\eqref{eq:homogenousCauchy} and $f$
by~\eqref{eq:steIteration}. It follows with Lemma~\ref{lem:sobolevProd} that
for a fixed $t\in I$, there exists a polynomial with positive coefficients
$\mathfrak{p}$ such that
\begin{equation*}
  \begin{split}
    \norm{d*^{\rhotilde_0} d \theta^{\rhotilde_0}}_{L^p}
    &\leq
    \norm{*^{\rhotilde_0} d \theta^{\rhotilde_0}}_{W^{1,p}}\\
    &\leq
    \mathfrak{p}\left( C, \norm{\rhotilde_0}_{W^{1,p}} \right) \left( 1 +
    \norm{\rhotilde_0}_{W^{2,p}}\left( 1 + \norm{\rhotilde_0}_{L^\infty}
    \right) \right)
  \end{split}
\end{equation*}
where
$$
C := \sup_{(t,x)\in I\times M}\frac{1}{u_0(t,x)}, \qquad \tilde{u}_0 =
\frac{\rhotilde_0\wedge \rhotilde_0}{2\dvol}.
$$
Hence,
\begin{multline*}
  \norm{d*^{\rhotilde_0} d \theta^{\rhotilde_0}}_{L^2(I,L^p)}\\
  \leq
  T^{\frac12}\mathfrak{p}_1\left( C, \norm{\rhotilde_0}_{L^\infty(I,
    W^{1,p})} \right) + \mathfrak{p}_2\left( C,
    \norm{\rhotilde_0}_{L^\infty(I, W^{1,p})} \right)
    \norm{\rhotilde_0}_{L^2(I, W^{2,p})}.
  \end{multline*}
  Together with the quadtratic estimates of Lemma~\ref{lem:quadraticEstimate}
  and Lemmas~\ref{lem:freezingTheCoefficients} and~\ref{lem:lowerOrderTerms} it
  follows that there exists polynomials with positive coefficients $\fp_1 \ldots
  \fp_7$ with the following significance,
  \begin{equation}\label{eq:fEstimate}
    \begin{split}
      &\norm{f}_{L^2(I, L^p)} \leq\\
      &\quad
      \fp_1\left( C, \norm{\rhotilde_0}_{L^\infty(I, W^{1,p})},
      \norm{\rhohat}_{L^\infty(I, W^{1,p})} \right) \norm{\rhohat}_{L^\infty(I,
        W^{1,p})}\norm{\rhohat}_{L^2(I, W^{2,p})}\\
        &\quad+
        \fp_2\left( C, \norm{\rhotilde_0}_{L^\infty(I, W^{1,p})},
        \norm{\rhohat}_{L^\infty(I,W^{1,p})}
        \right)\norm{\rhotilde_0}_{L^2(I,W^{2,p})}\norm{\rhohat}_{L^\infty(I,
          W^{1,p})}^2\\
          &\quad +
          T^\frac12\fp_3 \left( C, \norm{\rhotilde_0}_{L^\infty(I,W^{1,p})}
          \right)
          +
          \fp_4(C, \norm{\rhotilde_0}_{L^\infty(I,
            W^{1,p})})\norm{\rhotilde_0}_{L^2(I, W^{2,p})}\\
            &\quad +
            \left(T^\frac12\fp_5\left( C, \norm{\rhotilde_0}_{L^\infty(I,
              W^{1,p})} \right) + \fp_6\left( C, \norm{\rhotilde_0}_{L^\infty(I,
                W^{1,p})} \right)\norm{\rhotilde_0}_{L^2(I, W^{2,p})}
                \right)\\
                &\qquad \qquad
                \cdot \norm{\rhohat}_{L^\infty(I, W^{1,p})}\\
                &\quad+
                \fp_7\left( C, \norm{\rhotilde_0}_{L^\infty(I, W^{1,p})},
                \norm{\rho_0}_{W^{1,p}}, T \right)\norm{\rhohat}_{L^2(I,
                  W^{2,p})}\norm{\rhotilde_0 - \rho_0}_{L^\infty(I, W^{1,p})},
                \end{split}
              \end{equation}
              where
              $$
              C:=\sup_{\substack {(t,x)\in I\times M \\ 0 \leq s \leq 1}}
              \frac{1}{u_{\rhotilde_0(t,x) + s \rhohat(t,x)}},\qquad u_\rho :=
              \frac{\rho\wedge \rho}{2\dvol}.
              $$
              Since $\rhotilde_0$ solves the homogenous Cauchy
              problem~\eqref{eq:homogenousCauchy},
              $$
              \norm{\rhotilde_0}_{L^2(I, W^{2,p})} \leq  \norm{\rho_0}_{B^{1,p}_2}
              $$
              and by choosing $T$ small we can get
              $\norm{\rhotilde_0}_{L^2(I, W^{2,p})}$ arbitrary small. Also by choosing $T$
              small enough, we can garantee that $\norm{\rhotilde_0 - \rho_0}_{L^\infty(I,
                W^{1,p})}$ is small and that $\rhotilde_0$ is nondegenerate for all $t\in
                I$. By choosing $r>0$ small, we can garantee that $C < \infty$, since
                $\norm{\rhohat}_{L^\infty(I, L^\infty)}\leq r$. It then follows
                from~\eqref{eq:fEstimate} that by choosing $r>0, T>0$ small we get
                that
                $$
                \norm{\rhohat'}_{L^2(I,W^{2,p})} + \norm{\rhohat'}_{L^\infty(I, W^{1,p})} +
                \norm{\p_t \rhohat'}_{L^2(I, L^p)} \leq \norm{f}_{L^2(I, L^p)} \leq r
                $$
                and in particular $\sR(B_r) \subseteq B_r$.

                Now let $\rhohat_1,\rhohat_2 \in B_r$ and
                $$
                \rhohat_1' := \sR\left( \rhohat_1 \right),\qquad \rhohat_2' := \sR\left(
                \rhohat_2 \right).
                $$
                Then
                $$
                \p_t\left( \rhohat_1' - \rhohat_2' \right) + d\frac{d^{*^{\rho_0}}}{u_0}\left(
                \rhohat_1' - \rhohat_2' \right) = f\left( \rhohat_1 \right) - f\left(
                \rhohat_2\right).
                $$
                where
                \begin{equation*}
                  \begin{split}
                    f\left( \rhohat_1 \right) - f\left( \rhohat_2\right) &=
                    \left(d*^{\rho_1}d\theta^{\rho_1} - d*^{\rho_2}d\theta^{\rho_2} +
                    L_{\rhotilde_0} \left( \rhohat_1 - \rhohat_2 \right)\right)
                    \\
                    &\qquad
                    + \left(-A^{\rhotilde_0} + d\frac{d^{*^{\rho_0}}}{u_0} -
                    d\frac{d^{*^{\rhotilde_0}}}{\tilde{u}_0} \right) \left( \rhohat_1 -
                    \rhohat_2 \right)
                  \end{split}
                \end{equation*}
                and
                \begin{gather*}
                  \rho_1 := \rhotilde_0 + \rhohat_1,
                  \qquad
                  \rho_2 := \rhotilde_0 +
                  \rhohat_2,
                  \qquad u_0 := \frac{\rho_0 \wedge \rho_0}{2\dvol},
                  \qquad
                  \tilde{u}_0 := \frac{\rhotilde_0\wedge \rhotilde_0}{2\dvol}.
                \end{gather*}
                It follows from Lemma~\ref{lem:quadraticEstimate} (ii),
                Lemma~\ref{lem:lowerOrderTerms} and Lemma~\ref{lem:freezingTheCoefficients}
                that there exist polynomials with positive coefficients $\fp_1, \ldots, \fp_5$
                with the following significance,
                \begin{equation*}
                  \begin{split}
                    &\norm{f\left( \rhohat_1 \right) - f\left( \rhohat_2 \right)}_{L^2(I,
                    L^p)}\\
                    &\quad\leq
                    \fp_1\bigg( C, \norm{\rhohat_1}_{L^\infty(I, W^{1,p})},
                    \norm{\rhohat_2}_{L^\infty(I, W^{1,p})}, \norm{\rhohat_1}_{L^2(I,
                      W^{2,p})},\norm{\rhohat_2}_{L^2(I,
                        W^{2,p})},\\
                        &\qquad \qquad \qquad
                        \norm{\rhotilde_0}_{L^2(I, W^{2,p})} \bigg) \cdot
                        r \cdot \left(\norm{\rhohat_1 -
                        \rhohat_2}_{L^\infty(I, W^{1,p})} + \norm{\rhohat_1 -
                        \rhohat_2}_{L^2(I, W^{2,p})}\right)\\
                        &\quad +
                        \fp_3\left( C, \norm{\rhotilde_0}_{L^\infty(I, W^{1,p})},
                        \norm{\rho_0}_{W^{1,p}},T \right)\norm{
                        \rhotilde_0 - \rho_0}_{L^\infty(I, W^{1,p})} \norm{\rhohat_1 -
                        \rhohat_2}_{L^2(I, W^{2,p})}\\
                        &\quad +
                        \bigg( T^{\frac12}\fp_4(C,\norm{\rhotilde_0}_{L^\infty(I, W^{1,p})} +
                        \fp_5\left( C, \norm{\rhotilde_0}_{L^\infty(I, W^{1,p})} \right)
                        \norm{\rhotilde_{0}}_{L^2(I, W^{2,p})}\bigg)\\
                        &\qquad \qquad \qquad \cdot
                        \norm{\rhohat_1 - \rhohat_2}_{L^\infty(I, W^{1,p})},
                      \end{split}
                    \end{equation*}
                    where
                    $$
                    C :=  \sup_{\left( t,x \right) \in I \times M} \frac{1}{\tilde{u}_0\left( t,x
                  \right)} +
                  \sup_{\left( t,x \right) \in I \times M} \frac{1}{u_{\rho_0}(x) + s \left(
                s'\rhohat_1 (t, x) + \left( 1 - s' \right)\rhohat_2(t,x) \right)}
                $$
                Therefore we can choose $r$ and $T$ small such that
                \begin{multline*}
                  \norm{\rhohat_1' - \rhohat_2'}_{L^2(I, W^{2,p})} + \norm{\p_t(\rhohat_1' -
                  \rhohat_2')}_{L^2(I, L^p)}\\
                  \leq \frac12 \left(\norm{\rhohat_1 - \rhohat_2}_{L^\infty(I,
                    W^{1,p})} + \norm{\rhohat_1 - \rhohat_2}_{L^2(I, W^{2,p})}\right)
                  \end{multline*}
                  Thus, $\sR$ is a contraction on $B_r$. The theorem now follows from Banach's
                  fixed point theorem.
                \end{proof}
                \begin{remark}
                  If one would use the more heavy machinery of parabolic $L^p - L^q$ theory,
                  one would find with the same line of arguments that there exists a solution
                  to the Donaldson flow for short times in the space
                  $$
                  L^q(I, W^{2,p}) \cap W^{1,q}(I, L^p)
                  $$
                  for the initial condition
                  $$
                  \rho(t=0, \cdot) \in B^{s,p}_{q}, \quad s:= 2 - \frac{2}{q}
                  $$
                  for $sp > 4$. Here, $B^{s,p}_{q}$ denotes the Besov space~\cite{Grigoryan}.
                  This is the lowest possible condition at the regularity of the initial
                  condition. It asserts that solutions are continuous in time and space.
                \end{remark}



\section{Semiflow}\label{sec:semiflow}
We prove that the Donaldson flow is a local semiflow on the Besov space
$B^{1,p}_2(M, \Lambda^2)$ restricted to symplectic forms representing a given
cohomology class, i.e. for every symplectic form in $B^{1,p}_2(M,\Lambda^2)$
representing the cohomology class $a \in H^2(M;\R)$ there is an open
neighborhood $\sU \subset B^{1,p}_{2,a}(M, \Lambda^2)$ of this element and a
$T>0$ such that the map
$$
[0, T] \times \sU \to \sS^{1,p}_{a,2}(M, \Lambda^2)
: \qquad (t,\rho_0) \mapsto \rho(t,\cdot)
$$
is smooth, where
$$
\sS^{1,p}_{a,2} := \left\{ \rho \in B^{1,p}_2(M,\Lambda^2) | d\rho =0,\,
\rho\wedge\rho >0,\, [\rho] = a\right\}.
$$
and $\rho$ is the unique solution to the Donaldson flow with initial condition
$\rho(t=0,\cdot) = \rho_0$. Our argumentation for this result follows the one
given in~\cite{ANGENENT}, section 2.2.

\begin{theorem}[{\bf Semiflow}]\label{thm:semiflow}
The Donaldson flow is a local semiflow on\\
$\sS^{1,p}_{2,a}(M, \Lambda^2)$.
\end{theorem}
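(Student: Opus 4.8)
The plan is to follow the abstract scheme for quasilinear parabolic equations of \cite{ANGENENT}, Section~2.2, adapted to the function spaces of Section~\ref{sec:ste}. Fix $\rho_\ast\in\sS^{1,p}_{2,a}$ and write the Donaldson flow, near $\rho_\ast$, in the form
\[
  \p_t\rho+L_{\rho_\ast}\rho=F(\rho),\qquad F(\rho):=L_{\rho_\ast}\rho+d*^\rho d\theta^\rho,
\]
where $L_{\rho_\ast}=d\frac{d^{*^{\rho_\ast}}}{u_\ast}+A^{\rho_\ast}$ is the frozen linearized gradient operator of Lemma~\ref{lem:linearization}, equation~\eqref{eq:Lrho}. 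A short computation using Lemma~\ref{lem:linearization} gives $DF(\rho_\ast)=0$, so $F$ is ``quadratic'' near $\rho_\ast$; this is the abstract content of the quadratic estimates of Lemma~\ref{lem:quadraticEstimate}. On $I=[0,T)$ put $\cE(I):=W^{1,2,p}(M_I,\Lambda^2)$, the solution space of Section~\ref{sec:ste}, and $\cF(I):=L^2(I,L^p(M,\Lambda^2))$. By the Besov remark following~\eqref{def:norm}, the trace $\rho\mapsto\rho(0,\cdot)$ is a bounded surjection $\cE(I)\to B^{1,p}_2$ with bounded right inverse, $\cE(I)\hookrightarrow C^0(I,B^{1,p}_2)$, and $B^{1,p}_2$ is the trace space of the pair $(L^p,W^{2,p})$; hence $B^{1,p}_2$ (restricted as in $\sS^{1,p}_{2,a}$) is the correct state space.

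First I would record the linear theory. The principal part $d\frac{d^{*^{\rho_\ast}}}{u_\ast}$ has time independent, self adjoint coefficients, generates an analytic semigroup on $d(W^{1,p})$ with domain $d(W^{3,p})$, and has the maximal regularity property of Lemma~\ref{lem:maximalRegularity}; since $A^{\rho_\ast}$ is of lower order (Lemma~\ref{lem:lowerOrderTerms}), so has $L_{\rho_\ast}$. Consequently the linear map $\rhohat\mapsto\bigl(\p_t\rhohat+L_{\rho_\ast}\rhohat,\ \rhohat(0,\cdot)\bigr)$ is a Banach space isomorphism $\cE(I)\to\cF(I)\times B^{1,p}_2$ for every $T>0$. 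Next I would establish smoothness of the nonlinearity: since $p>4$, $W^{1,p}(M)\hookrightarrow C^0(M)$ is a Banach algebra closed under composition with smooth functions (Lemma~\ref{lem:sobolevProd} and its parabolic counterparts in the appendix), so the superposition maps $\rho\mapsto *^\rho$, $\rho\mapsto 1/u$, $\rho\mapsto\theta^\rho$, hence $\rho\mapsto d*^\rho d\theta^\rho$ and $\rho\mapsto F(\rho)$, are $C^\infty$ (indeed real analytic) on the open set of nondegenerate forms as maps $\cE(I)\to\cF(I)$, and $\rho_0\mapsto L_{\rho_0}$ is smooth into the bounded operators $\cE(I)\to\cF(I)$.

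Then I would run the implicit function theorem with the initial datum as a parameter. Let $\rho_0\mapsto\rhotilde_0(\rho_0)$ be the bounded linear right inverse of the trace map furnished by the homogeneous Cauchy problem~\eqref{eq:homogenousCauchy} (with $L_{\rho_\ast}$ as the frozen operator), write $\rho=\rhotilde_0(\rho_0)+\rhohat$, and for $\rho_0$ in a small $B^{1,p}_2$-neighborhood $\sU$ of $\rho_\ast$ set
\[
  G(\rhohat,\rho_0):=\p_t\rhohat+L_{\rho_\ast}\rhohat-F\bigl(\rhotilde_0(\rho_0)+\rhohat\bigr)\in\cF(I),\qquad \rhohat(0,\cdot)=0 .
\]
By the previous paragraph $G$ is $C^\infty$, $G(0,\rho_\ast)=0$ for $T$ small, and $D_{\rhohat}G(0,\rho_\ast)$ is an isomorphism onto $\cF(I)$ on $\{\rhohat(0,\cdot)=0\}$ (using $DF(\rho_\ast)=0$). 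Equivalently, running the contraction $\sR$ of Theorem~\ref{thm:localExistence} with $\rho_0$ as a parameter, the estimates of Lemmas~\ref{lem:quadraticEstimate}, \ref{lem:freezingTheCoefficients} and~\ref{lem:lowerOrderTerms} show $\sR(B_r)\subseteq B_r$ and contraction constant $\tfrac12$ uniformly for $\rho_0\in\sU$ after shrinking $r,T$, so the uniform contraction principle applies. Either way one obtains a $C^\infty$ map $\sU\to\cE(I)$, $\rho_0\mapsto\rhohat(\rho_0)$, hence a $C^\infty$ solution map $\rho_0\mapsto\rho(\cdot\,;\rho_0)=\rhotilde_0(\rho_0)+\rhohat(\rho_0)$. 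Composing with $\cE(I)\hookrightarrow C^0(I,B^{1,p}_2)$ gives continuity of $(t,\rho_0)\mapsto\rho(t,\cdot)$ into $B^{1,p}_2$ and smoothness in $\rho_0$; joint smoothness for $t\in(0,T)$ follows from the instant regularity corollary to Theorem~\ref{thm:regularity} together with $\p_t\rho=d*^\rho d\theta^\rho$ being a smooth function of $\rho$. The constraints $d\rho_t=0$ and $[\rho_t]=a$ are preserved and $\rho_t$ stays nondegenerate after shrinking $T$, so $\rho(t,\cdot)\in\sS^{1,p}_{2,a}$; the semiflow identity $\rho(t+s,\rho_0)=\rho(t,\rho(s,\rho_0))$ is immediate from the uniqueness part of Theorem~\ref{thm:localExistence}.

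The step I expect to be the main obstacle is the one combining the smoothness of the superposition operators with the uniformity in the parameter: one has to verify that the Nemytskii maps built from $*^\rho$, $\theta^\rho$ and the coefficients of $L_\rho$ are genuinely $C^\infty$ --- not merely locally Lipschitz --- between the Bochner--Sobolev spaces $\cE(I)$ and $\cF(I)$, and that the constants in Lemmas~\ref{lem:quadraticEstimate}, \ref{lem:freezingTheCoefficients} and~\ref{lem:lowerOrderTerms} can be taken locally uniform in $\rho_0$, so that the fixed point construction of Theorem~\ref{thm:localExistence} goes through simultaneously for all initial data near $\rho_\ast$. That is exactly the input needed to apply the implicit function theorem to $G$.
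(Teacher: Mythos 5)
There is a genuine gap, and it sits exactly where the paper's Section~\ref{sec:semiflow} does its real work. Your implicit-function-theorem variant is based at the wrong point: with your definitions, $G(0,\rho_\ast)=-F\bigl(\rhotilde_0(\rho_\ast)\bigr)$, which does not vanish (the homogeneous linear evolution is not a solution of the Donaldson flow), so there is no zero of $G$ at $(0,\rho_\ast)$ to linearize around, and ``$G(0,\rho_\ast)=0$ for $T$ small'' is false. The natural zero is $\bigl(\bar{\rho}-\rhotilde_0(\rho_\ast),\rho_\ast\bigr)$, with $\bar{\rho}$ the actual solution from Theorem~\ref{thm:localExistence}, and there the linearization is $\p_t+L_{\bar{\rho}}$: an operator whose coefficients are time-dependent and only of $W^{1,2,p}$ regularity. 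Proving that this operator is an isomorphism from $\{\rhohat\in\sXhat:\ \rhohat(0,\cdot)=0\}$ onto $\sYhat$ is precisely the content of Lemma~\ref{lem:denseImage} (dense image via the adjoint equation and mollification) and Lemma~\ref{lem:cauchy} (solvability of the Cauchy problem), which occupy most of the paper's proof; your proposal never supplies this, and maximal regularity for a frozen, time-independent operator is not a substitute. The paper then feeds this into the IFT applied to $\sF(\rho,\tauhat)=\p_t\rho+\p_t(\sT\tauhat)-d*^{\rho+\sT\tauhat}d\theta^{\rho+\sT\tauhat}$ at $(\bar{\rho},0)$, where $\sT$ is a fixed extension operator (heat semigroup of the background Hodge Laplacian), so the initial datum enters only through the smooth Nemytskii part and never through the linear operator that must be inverted.

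A second, independent problem is that your frozen operator $L_{\rho_\ast}$ is not even bounded from $\cE(I)$ to $\cF(I)=L^2(I,L^p)$ when $\rho_\ast$ is merely in $B^{1,p}_2$: by \eqref{eq:Arho} the lower-order part $A^{\rho_\ast}$ contains $d*^{\rho_\ast}\bigl(\tfrac{du_\ast}{u_\ast^2}\wedge(\cdot)\bigr)$ and $d\hat{*}^{\rho_\ast,\,\cdot}\,d\theta^{\rho_\ast}$, whose expansion produces two spatial derivatives of $\rho_\ast$, which are not in $L^p$. This is exactly why the short-time proof freezes only the divergence-form principal part $d\frac{d^{*^{\rho_0}}}{u_0}$ and evaluates lower-order terms along the smoother extension $\rhotilde_0\in L^2(I,W^{2,p})$, and why the semiflow proof linearizes along $\bar{\rho}\in W^{1,2,p}$, where Lemma~\ref{lem:lowerOrderTerms} applies. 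Hence the decomposition $\p_t\rho+L_{\rho_\ast}\rho=F(\rho)$ and the claim $DF(\rho_\ast)=0$ are not available in your spaces. Your fallback --- rerunning the contraction $\sR$ of Theorem~\ref{thm:localExistence} with $\rho_0$ as a parameter and invoking a uniform contraction principle --- is a genuinely different route that could in principle bypass Lemmas~\ref{lem:denseImage} and~\ref{lem:cauchy}, but it stands or falls with joint $C^\infty$ (not merely Lipschitz) dependence of $\sR$ on $(\rhohat,\rho_0)$, in particular smooth dependence on $\rho_0$ of the solution operator of the $\rho_0$-dependent linear problem \eqref{eq:steIteration}; you identify this as ``the main obstacle'' but do not prove it, so the smooth dependence on initial data --- the entire content of Theorem~\ref{thm:semiflow} beyond Theorem~\ref{thm:localExistence} --- remains unestablished.
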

\begin{proof}
  See page~\pageref{proof:semiflow}.
\end{proof}
For the proof of the theorem we need the following two lemmas.  Let $p>4$. Let
$I = [0,T]$ be a closed interval of the reals.  Let
$$
\bar{\rho} \in L^2(I, W^{2,p}(M, \Lambda^2)) \cap W^{1,2}(I, L^p(M,
\Lambda^2))
$$
be a path of nondegenerate two-forms and let $L_{\bar{\rho}}$ be the operator
defined by equation~\eqref{eq:Lrho}. Define the spaces
$$
\sXhat := \{\rhohat \in L^2(I, W^{2,p}(M, \Lambda^2)) \cap W^{1,2}(I,
L^p(M, \Lambda^2)) |\ \forall t: \rhohat(t,\cdot)\ \text{exact}\}.
$$
and
$$
\sYhat:= \{ \etahat \in L^2(I, L^p(M, \Lambda^2)) |\ \forall t:
\etahat(t,\cdot) \ \text{exact}\}
$$
and consider the operator
$$
\p_t + L_{\bar{\rho}} : \sXhat \to \sYhat.
$$
We'll also need the following $L^2$ versions of the spaces $\sXhat,\sYhat$,
$$
\sXhat_2 := \{\rhohat \in L^2(I, W^{2,2}(M, \Lambda^2)) \cap W^{1,2}(I,
L^2(M, \Lambda^2)) |\ \forall t: \rhohat(t,\cdot)\ \text{exact}\}.
$$
and
$$
\sYhat_2:= \{ \etahat \in L^2(I, L^2(M, \Lambda^2)) |\ \forall t:
\etahat(t,\cdot) \ \text{exact}\}
$$
The next lemma shows that the image of the operator $\p_t + L_\rhobar$ for a
smooth $\rhobar$ restricted to $\sXhat_2$ is dense in $\sYhat_2$.
\begin{lemma}[{\bf Dense Image}]\label{lem:denseImage}
  Let $\rhobar$ be a smooth path of smooth nondegenerate two-forms. There
  exists a constant $c_0>0$ such that for all $c>c_0$ the following holds.
  Assume $\etahat \in \sYhat_2$ such that
  \begin{equation}\label{eq:denseImage}
    \int_I \int_M g^{\rhobar}\left(\p_t \phihat + L_\rhobar \phihat + c
    \phihat,
    \etahat\right) \dvol dt= 0
  \end{equation}
  for all exact $\phihat \in C_0^\infty(I, C^\infty_0(M, \Lambda^2))$. Then
  $\etahat = 0$.
\end{lemma}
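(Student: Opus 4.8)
The statement is a density-by-duality argument: to show that the image of $\p_t + L_\rhobar + c$ (restricted to exact two-form paths $\sXhat_2$) is dense in $\sYhat_2$, one shows that any $\etahat \in \sYhat_2$ orthogonal to the image must vanish. The orthogonality relation~\eqref{eq:denseImage} says that $\etahat$ is a weak solution of the formal adjoint equation. The plan is therefore: (1) integrate by parts in both $t$ and $M$ to identify the adjoint operator; (2) recognize that the adjoint is, up to the sign flip in time and lower-order terms, again a parabolic operator of the same structural type as $\p_t + \frac{1}{u}d^{*^\rhobar}d$ on exact forms, which for smooth $\rhobar$ has good regularity theory; (3) use backward-in-time parabolic regularity to conclude $\etahat$ is smooth and solves a genuine PDE; (4) use the terminal condition forced by the test functions being compactly supported in $I$ (so $\etahat$ satisfies the adjoint equation with zero data at $t = T$), together with the zero-order coercive term $c\phihat$ for $c$ large, to run an energy estimate that forces $\etahat \equiv 0$.

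\textbf{Key steps in order.} First I would take $\phihat = d\lambda$ for $\lambda \in C_0^\infty(I, C^\infty_0(M, \Lambda^1))$ and rewrite~\eqref{eq:denseImage} using the self-adjointness of $d\frac{d^{*^\rhobar}}{u}$ (noted in the paper just before Lemma~\ref{lem:linearization}) with respect to the relevant inner products, moving $\p_t$ onto $\etahat$ with a sign change and moving $L_\rhobar = d\frac{d^{*^\rhobar}}{u} + A^\rhobar$ onto $\etahat$ via its adjoint $L_\rhobar^*$. This shows $\etahat$ is a weak solution, \emph{backward in time}, of
$$
-\p_t \etahat + L_\rhobar^* \etahat + c\,\etahat = 0, \qquad \etahat(T,\cdot) = 0
$$
in the space of exact two-forms, where the terminal condition at $t=T$ comes from the test functions vanishing near the endpoints of $I$ and a standard approximation argument. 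Second, since $\rhobar$ is smooth, the operator $d\frac{d^{*^\rhobar}}{u}$ is a smooth-coefficient elliptic operator of divergence form on $d\Om^1$ (with the Hodge-type parametrix available), so backward parabolic regularity — the time-reversed analogue of the maximal regularity invoked in Lemma~\ref{lem:maximalRegularity} — upgrades $\etahat$ from $\sYhat_2 = L^2(I,L^2)$ to $L^2(I, W^{2,2}) \cap W^{1,2}(I,L^2)$ and in fact to a classical solution on $[0,T]$. Third, with $\etahat$ now a legitimate solution with $\etahat(T) = 0$, test the equation against $\etahat$ itself and integrate over $[s,T]\times M$: the $d\frac{d^{*^\rhobar}}{u}$ term contributes a nonnegative $\int \frac{1}{u}|d^{*^\rhobar}\etahat|^2$, the zero-order term contributes $c\|\etahat\|^2$, and the first-order operator $A^\rhobar$ can be absorbed by Cauchy--Schwarz into an $\eps\|\,\cdot\,\|_{W^{1,2}}^2 + C_\eps\|\etahat\|^2$ estimate; choosing $c = c_0$ large enough (depending on $\|\rhobar\|_{C^1}$ and the lower bound on $u$) beats all the absorbed constants, and a Grönwall argument run backward from $t = T$ forces $\etahat \equiv 0$ on $[0,T]$.

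\textbf{Main obstacle.} The delicate point is Step~2: justifying that the formal adjoint operator genuinely has the backward parabolic regularity needed, \emph{as an operator on the closed subspace of exact two-forms} rather than on all of $\Om^2$. The operator $d\frac{d^{*^\rhobar}}{u}$ is only elliptic modulo the kernel of $d^{*^\rhobar}$, so one must either work throughout on $d\Om^1 \cong \Om^1/\ker$ with the complementary gauge condition $d^{*^{\rho_0}}\lambda = 0$ (as is done implicitly in Theorem~\ref{thm:localExistence}), or write $\etahat = d\mu$ and derive a genuine parabolic equation for a primitive $\mu$ in Coulomb-type gauge; the lower-order term $A^\rhobar$ and its adjoint must be checked to respect (or be correctable on) this subspace. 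Once the functional-analytic bookkeeping on $d\Om^1$ is set up correctly — and for smooth $\rhobar$ all coefficients are smooth so no Sobolev-product subtleties arise — the regularity and the energy estimate are routine. I would also remark that the precise value $c_0$ depends only on $\sup_M 1/u$ and $\|\rhobar\|_{C^1}$ over the (compact) interval $I$, which is what the statement claims.
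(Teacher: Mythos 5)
Your overall strategy is the same duality argument as the paper's (derive the weak adjoint equation for $\etahat$, upgrade its regularity, then conclude by uniqueness for the backward problem with $c$ large), but there is one genuine gap: the source of the terminal condition. You assert that $\etahat(T,\cdot)=0$ "comes from the test functions vanishing near the endpoints of $I$". If the test forms vanish near both $t=0$ and $t=T$, the hypothesis~\eqref{eq:denseImage} only says that $\etahat$ satisfies the adjoint equation in the distributional sense on the \emph{open} interval, with no condition at either endpoint; in that case the conclusion is false: solving the backward adjoint Cauchy problem from any nonzero exact terminal datum at $t=T$ produces a nonzero $\etahat\in\sYhat_2$ that is $L^2$-orthogonal to $(\p_t+L_\rhobar+c)\phihat$ for every interior-supported $\phihat$ (your ansatz would then "prove" it vanishes). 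The terminal condition has to be extracted from test forms that are free at $t=T$ (only $\phihat(0,\cdot)=0$ is used): the paper does this by writing $\phihat(t,\cdot)=\int_0^t\p_s\phihat(s,\cdot)\,ds$ and exchanging the order of integration, which gives the identity
\begin{equation*}
  B^{\rhobar}\etahat(t) \;=\; -\int_t^T \Bigl(B^{\rhobar}\bigl(L^{*^\rhobar}_{\rhobar}\etahat + c\,\etahat\bigr)\Bigr)(s)\,ds ,
\end{equation*}
simultaneously identifying the weak time derivative of $\etahat$ and encoding the endpoint behaviour; the endpoint is then nailed down in the paper's Step~5, where $\etahat_T$ is shown to be a weak solution of $L^{*^\rhobar}_{\rhobar}\etahat_T + c\,\etahat_T=0$ and injectivity of $L^{*^\rhobar}_{\rhobar}+c$ for $c>c_0$ forces $\etahat_T=0$. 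Without some version of this, your backward Gr\"onwall argument has nothing to anchor to.

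A secondary point: the regularity upgrade you defer ("backward parabolic regularity on the exact forms") is exactly what the paper labours over in its Steps~2--4. Since a priori $\p_t\etahat$ only exists with values in $(W^{2,2})^*$, one cannot directly quote classical backward parabolic regularity; the paper time-reverses, uses invertibility of the spatial operator (Lax--Milgram together with Lemma~\ref{lem:ellipticRegularity}) to write $\etatilde=-(\Ltilde_{\rhobar})^{-1}\p_t\etatilde$, mollifies in time, proves a $\delta$-uniform bound on $\p_t\etatilde_\delta+\Ltilde_{\rhobar}\etatilde_\delta$, and then invokes maximal regularity (as in Lemma~\ref{lem:maximalRegularity}) and weak compactness. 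You correctly flag this as the main obstacle, so I read it as an honest sketch rather than an error, but be aware that this specific mollification-and-inverse device (or an equivalent) is needed; once it is in place, your concluding energy estimate is the same as the paper's final appeal to standard $L^2$ parabolic uniqueness.
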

\begin{proof}
  We follow the proof of Theorem~3.10 in~\cite{ROBSAL}. The proof has 5 steps.
  Let $B^{\rhobar_t} \in \Gamma(M, \Aut(\Lambda^2))$ be the automorphism valued
  section of $M$ defined by the equation
  $$
  g^{\rhobar_t}(\eta, \sigma) =: g(\eta, B^{\rhobar_t} \sigma)
  $$
  for each fixed $t \in I$ and two-forms $\eta, \sigma$, where $g$ is the
  background metric of $M$ and $g^\rhobar$ is as described in the introduction.

  \smallskip\noindent\emph{Step 1.} $\etahat \in W^{1,2}(I,
  (W^{2,2}(\Lambda^2))^*)$ and
  \begin{equation}\label{eq:derivativeEtahat}
    \p_t \etahat - L_\rhobar^{*^\rhobar}\etahat - c \etahat+ (B^{\rhobar})^{-1}(\p_t B^{\rhobar})
     \etahat = 0,
  \end{equation}
  where $L_\rhobar^{*^\rhobar} : \sYhat_2 \to L^2(I, (W^{2,2}(M,
  \Lambda^2))^{*})$ is
  the formal adjoint of $L_\rhobar$ with respect to the inner product
  \eqref{eq:metric}.

  For $\phihat \in C_0^{\infty}(I,
  C_0^\infty(M, \Lambda^2))$
  \begin{align*}
    &\int_I \int_M g(\p_t \phihat,B^\rhobar \etahat)\dvol dt\\
    &=
    \int_I \int_M g^\rhobar(\p_t \phihat, \etahat)\dvol dt\\
    &=
    - \int_I\int_M g^\rhobar(L_\rhobar\phihat + c\phihat, \etahat) \dvol dt\\
    &=
    - \int_I\int_M g^\rhobar(\phihat, L^{*^\rhobar}_\rhobar\etahat + c\etahat) \dvol dt\\
    &=
    - \int_I\int_M g\left(\phihat, B^\rhobar \left(L^{*^\rhobar}_\rhobar\etahat +
    c\etahat\right)\right) \dvol dt\\
    &=
    - \int_I\int_M g\left(\int_0^t\p_s \phihat(s,\cdot)\,
    ds, B^\rhobar \left(L^{*^\rhobar}_\rhobar\etahat + c \etahat\right)\right) \dvol\, dt\\
    &=
    - \int_I\int_t^T\int_M g\left(\p_s \phihat(s,\cdot)
    , \left(B^\rhobar \left(L^{*^\rhobar}_\rhobar\etahat + c \etahat
    \right)\right) (t)\right) \dvol\, dt ds\\
    &=
    - \int_I\int_M g\left(\p_t \phihat
    , \int_t^T \left(B^\rhobar \left(L^{*^\rhobar}_\rhobar\etahat + c \etahat
    \right)\right)(s) ds\right) \dvol\,
    dt.
  \end{align*}
  Since the time derivatives of test functions $\phi \in C^{\infty}_0(I,
  C_0^\infty(M, \Lambda^2))$ are dense in $L^2(I, L^2(M, \Lambda^2))$ it follows
  that
  $$
  B^\rhobar \etahat = - \int_t^T \left(B^\rhobar\left(
  L^{*^\rhobar}_\rhobar\etahat + c \etahat\right)\right)(s) ds
  $$
  and therefore
  $$
  (\p_t  B^\rhobar) \etahat + B^\rhobar \p_t \etahat = B^\rhobar \left(
  L^{*^\rhobar}_\rhobar\etahat + c \etahat\right).
  $$
  This proves Step~1.

  For ease of notation we define
  \begin{gather*}
    L'_\rhobar := L^{*^\rhobar}_\rhobar + c\, \id - (B^\rhobar)^{-1} (\p_t
    B^\rhobar) \\
    \Ltilde_{\rhobar}(t,\cdot) := L'_{\rhobar(T -t,\cdot)}.
  \end{gather*}
  Now choose a smooth cutoff function $\theta: \R \to \R$ such that $\theta(t)
  = 0$ for $t \in \R \setminus [0,1]$, $\theta(t) \geq 0$ for all $t \in \R$ and $\int
  \theta = 1$. For $\delta >0$ define $\theta_\delta(t) := \delta^{-1} \theta
  (\delta^{-1} t)$.

  \smallskip\noindent\emph{Step 2.}
  Extend $\etahat$ with zero outside of $I$ and define
  $\etatilde(t,\cdot) :=\etahat(T-t,\cdot) $. For $\delta > 0$ sufficiently
  small we have
  $$
  \etatilde_\delta := \theta_\delta * \etatilde \in L^2(\R, W^{2,2}(M, \Lambda^2))
  \cap W^{1,2}(\R, L^2(M, \Lambda^2)).
  $$

  First note that the highest order term of $\Ltilde_{\rhobar_t}$
  equals
  $$
  d\frac{d^{*^{\rhobar_{T- t}}}}{u_{T -t}} : \{ \rhohat \in W^{2,2}(M, \Lambda^2)\, | \,
  \rhohat \text{ exact}\} \to \{ \etahat \in L^2(M, \Lambda^2) \, | \,
    \etahat \text{ exact}\}
  $$
  for
  a fixed $t$ (where $\rhobar_t$ is defined). If $c>c_0>0$ is big enough,
  $\Ltilde_\rhobar$ is positive definite on the exact two-forms in $W^{1,2}(M,
  \Lambda^2)$ and it follows from the Lax-Milgram theorem and the elliptic
  regularity Lemma~\ref{lem:ellipticRegularity} that this operator is
  invertible on the exact two-forms in $L^2(M, \Lambda^2)$. By Step~1,
  $$
  \p_t \etatilde (t,\cdot)= - \p_t \etahat (T- t,\cdot) = -(L'_{\rhobar}
  \etahat) (T - t,\cdot)=  - \Ltilde_{\rhobar}(t,\cdot)
  \etatilde(t,\cdot).
  $$
  We multiply this equation with
  $-(\Ltilde_{\rhobar})^{-1}$ to find
  $$
  \etatilde = -(\Ltilde_{\rhobar})^{-1} \p_t \etatilde.
  $$
  The convolution with $\theta_\delta$ is then given by
  \begin{align*}
    \etatilde_\delta
    &=
    -\theta_\delta * \left((\Ltilde_{\rhobar})^{-1} \p_t \etatilde\right)
    \\
    &=
    -\left(\p_t \theta_\delta\right) * \left((\Ltilde_{\rhobar})^{-1}
    \etatilde\right) - \theta_\delta *\left((\Ltilde_{\rhobar})^{-1} (\p_t
    \Ltilde_{\rhobar})(\Ltilde_{\rhobar})^{-1}\etatilde \right)\\
    &=
    -\left(\p_t \theta_\delta\right) * \left((\Ltilde_{\rhobar})^{-1}
    \etatilde\right) + \theta_\delta *\left((\Ltilde_{\rhobar})^{-1}
    \zetatilde\right)
  \end{align*}
  where
  $$
  \zetatilde := -(\Ltilde_\rhobar)^{-1}(\p_t
  \Ltilde_\rhobar)(\Ltilde_{\rhobar})^{-1}\etatilde
  $$
  and we're using the identity $\theta * (u \p_t v) = (\p_t \theta) * (uv) -
  \theta * ((\p_t u)v)$.
  This proves Step~2.

  \smallskip\noindent\emph{Step 3.}
  There exists a constant $c>0$ independent of $\delta$ such that
  $$
  \norm{\p_t \etatilde_\delta + \Ltilde_\rhobar \etatilde_\delta}_{L^2(\R, L^2)}
  \leq c
  $$
  for all sufficiently small $\delta > 0$.

  With Step~2 and the identity $\p_t \etatilde_\delta = (\p_t \theta_\delta) *
  \etatilde$ it follows that
  \begin{align*}
    &\p_t \etatilde_\delta + \Ltilde_\rhobar\etatilde_\delta\\
    &\qquad=
    (\p_t \theta_\delta) * \etatilde - \Ltilde_\rhobar
    \left(\p_t \theta_\delta\right) * \left((\Ltilde_{\rhobar})^{-1}
    \etatilde\right) + \Ltilde_\rhobar \theta_\delta
    *\left((\Ltilde_{\rhobar})^{-1} \zetatilde\right)\\
    &\qquad=
    -\Ltilde_\rhobar \left(-\Ltilde_\rhobar^{-1 }(\p_t \theta_\delta) * \etatilde
    + \left(\p_t \theta_\delta\right) * \left((\Ltilde_{\rhobar})^{-1}
    \etatilde\right)\right) + \Ltilde_\rhobar \theta_\delta
    *\left((\Ltilde_{\rhobar})^{-1} \zetatilde\right).
  \end{align*}
  Since
  $$
  \rhobar \in L^{2}(I, W^{2,p}(M,\Lambda^2)) \cap W^{1,2}(I, L^p(M,
  \Lambda^2)) \hookrightarrow C^0(I, W^{1,p}(M, \Lambda^2)
  $$
  the operator norms of $L_\rhobar, \Ltilde_\rhobar$ are bounded on $I$ and so
  is the norm of $\zetahat$. Therefore the second term on the right is bounded
  in $L^2(\R, L^2(M, \Lambda^2))$, uniformly in $\delta$. For the first term
  on the right hand side we have the estimate
  \begin{align*}
    &\norm{\left(\Ltilde_\rhobar^{-1 }(\p_t \theta_\delta) * \etatilde -
    (\left(\p_t \theta_\delta\right) * \left((\Ltilde_{\rhobar})^{-1}
    \etatilde\right)\right)(t)}_{W^{2,2}}\\
    &\qquad\leq
    \norm{\int_{t-\delta}^{t+\delta}\frac{1}{\delta}\dot{\theta}\left(\frac{t-s}{\delta}\right)
    \frac{\Ltilde_\rhobar^{-1}(t) - \Ltilde_\rhobar^{-1}(s)}{\delta}
    \etatilde(s)\, ds}_{W^{2,2}}\\
    &\qquad\leq
    c \int_\R\Abs{\frac{1}{\delta}\dot{\theta}\left(\frac{t-s}{\delta}\right)}
    \norm{\etatilde(s)}_{L^2}\, ds
  \end{align*}
  Here we're using that $\Ltilde^{-1}_{\rhobar}(s) = L'^{-1}_{\rhobar(T-s)}
  $ depends smoothly on $\rhobar(T - s)$.
  Then it follows with Young's inequality that
  \begin{multline*}
    \norm{-\Ltilde_\rhobar \left(-\Ltilde_\rhobar^{-1 }(\p_t \theta_\delta) * \etatilde +
    (\left(\p_t \theta_\delta\right) * \left((\Ltilde_{\rhobar})^{-1}
    \etatilde\right)\right)}_{L^2(\R, L^2)}\\
    \leq c \norm{\p_t \theta}_{L^1(\R)}
    \norm{\etatilde}_{L^2(\R, L^2)}.
  \end{multline*}
  This proves Step~3.

  \smallskip\noindent\emph{Step 4.}
  $\etahat \in L^2(I, W^{2,2}(M, \Lambda^2)) \cap W^{1,2}(I, L^2(M,
  \Lambda^2))$ and $\p_t \etahat - L'_\rhobar \etahat = 0$.

  As in Lemma~\ref{lem:maximalRegularity} we can see that the operator $\p_t
  + \Ltilde_\rhobar$ has the maximal regularity property.
  Then it follows from this and Step~2 that there exists a weakly converging
  subsequence $\etatilde_\delta$ in $L^2(I, W^{2,2}(M, \Lambda^2)) \cap
  W^{1,2}(I, L^2(M, \Lambda^2))$ that converges to a limit $
  {\etatilde}_*$ as $\delta$ goes to zero. At the same time $\etahat_\delta$
  converges strongly to $\etatilde$ in $L^2(I, \Lambda^2(I, \Lambda^2))$ and
  therefore
  $$
  \etatilde_* = \etatilde \in L^2(I, W^{2,2}(M, \Lambda^2)) \cap
  W^{1,2}(I, L^2(M, \Lambda^2))
  $$
  and
  $$
  \etahat \in L^2(I, W^{2,2}(M, \Lambda^2)) \cap
  W^{1,2}(I, L^2(M, \Lambda^2)).
  $$
  That $\p_t \etahat - L'_\rhobar \etahat = 0$ then follows from
  equation~\eqref{eq:derivativeEtahat}. This proves Step~4.

  \smallskip\noindent\emph{Step 5.}
  $\etahat = 0$.

  By Step~4 we have $\etahat \in C^0(I, W^{1,2}(M, \Lambda^2))$ and therefore
  $$
  \int_M g^{\rhobar_T}(L_\rhobar \phihat + c \phihat, \etahat_T) \dvol = 0
  $$
  for all $\phihat \in C_0^\infty(M, \Lambda)$. It follows that
  $\etahat_T \in W^{1,2}(M, \Lambda^2)$ is a weak solution to the equation
  $$
  L_\rhobar^{*^\rhobar} \etahat_T + c\etahat_T=0.
  $$
  Since $L_\rhobar^{*^\rhobar} + c\,\id$ is injective for $c>c_0$ big enough,
  this implies that $\etahat_T = 0$. But this means that $\etatilde$ is a
  solution to the Cauchy problem
  $$
  \p_t \etatilde + \Ltilde_\rhobar \etatilde= 0, \qquad \etatilde(0,\cdot) = 0.
  $$
  The standart $L^2$ parabolic estimate shows that such a solution is unique
  for $c>0$ big enough and hence $\etatilde = \etahat = 0$.  This proves the
  lemma.
\end{proof}

\begin{lemma}[{\bf Cauchy Problem}]\label{lem:cauchy}
For every $\fhat \in \sYhat$ there exists a unique solution $\rhohat \in
\sXhat$ to the Cauchy problem
\begin{equation}\label{eq:LrhoCauchy}
\p_t \rhohat + L_{\bar{\rho}} \rhohat = \fhat, \qquad
\rhohat(t=0,\cdot) = 0.
\end{equation}
\end{lemma}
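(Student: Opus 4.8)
The plan is to obtain existence by freezing the coefficients at $t=0$ and perturbing off the resulting autonomous operator, using its maximal regularity, and to obtain uniqueness by a standard energy estimate. The two parts are essentially independent.

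\emph{Uniqueness.} Let $\rhohat\in\sXhat$ solve $\p_t\rhohat+L_{\rhobar}\rhohat=0$ with $\rhohat(0,\cdot)=0$. Since $\rhohat\in L^2(I,W^{2,p})$ and $\p_t\rhohat\in L^2(I,L^p)\subset L^2(I,L^2)$, the function $t\mapsto\int_M|\rhohat(t,\cdot)|^2_{g^{\rhobar_t}}\dvol$ is absolutely continuous with derivative $2\int_M\inner{\p_t\rhohat}{\rhohat}_{g^{\rhobar_t}}\dvol+\int_M(\p_t B^{\rhobar_t})(\rhohat,\rhohat)\dvol$, where $B^{\rhobar_t}$ is the automorphism from the proof of Lemma~\ref{lem:denseImage}. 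By the description of $L_{\rhobar}$ in Lemma~\ref{lem:linearization}, $\int_M\inner{d\frac{d^{*^{\rhobar}}}{u}\rhohat}{\rhohat}_{g^{\rhobar}}\dvol=\int_M\frac1u|d^{*^{\rhobar}}\rhohat|^2_{g^{\rhobar}}\dvol\ge0$, and on exact two-forms this form is coercive in $W^{1,2}$ modulo a lower order term by the elliptic estimate underlying Lemma~\ref{lem:ellipticRegularity}. The remaining contributions, coming from $A^{\rhobar}\rhohat$ (first order in $\rhohat$) and from $\p_t B^{\rhobar_t}$, are bounded by $C(t)\,\|\rhohat\|_{W^{1,2}}\|\rhohat\|_{L^2}$ with $C\in L^2(I)$ because $\rhobar\in L^2(I,W^{2,p})\cap W^{1,2}(I,L^p)$; absorbing the $W^{1,2}$ part with Young's inequality yields $\frac{d}{dt}\int_M|\rhohat|^2_{g^{\rhobar_t}}\dvol\le m(t)\int_M|\rhohat|^2_{g^{\rhobar_t}}\dvol$ with $m\in L^1(I)$, so Gr\"onwall and $\rhohat(0,\cdot)=0$ force $\rhohat\equiv0$.

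\emph{Local existence.} Put $\rho_0:=\rhobar(0,\cdot)\in B^{1,p}_2$ (recall $\sXhat\hookrightarrow C^0(I,B^{1,p}_2)$) and let $\sL_0:=d\frac{d^{*^{\rho_0}}}{u_0}$ be the associated time-independent operator. As recorded in Section~\ref{sec:ste} — and by the same localization-and-coefficient-freezing reasoning used for Lemmas~\ref{lem:ellipticRegularity} and~\ref{lem:maximalRegularity}, the non-smoothness of the coefficients being harmless since $W^{1,p}\hookrightarrow C^0$ for $p>4$ — the operator $\sL_0$ on the exact two-forms has the maximal regularity property, so by \cite{Lamberton} the problem $\p_t\rhohat+\sL_0\rhohat=\fhat_0$, $\rhohat(t_0,\cdot)=\rhohat_{\mathrm{in}}$ has, on any subinterval $I'=[t_0,t_0+\tau]\subset I$, a unique solution $\rhohat\in\sXhat|_{I'}$ for every exact $\fhat_0\in\sYhat|_{I'}$ and exact $\rhohat_{\mathrm{in}}\in B^{1,p}_2$, depending boundedly on the data with constant uniform in $\tau\le T$; call this solution map $\sG_0$. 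On $I'$, the Cauchy problem $\p_t\rhohat+L_{\rhobar}\rhohat=\fhat$, $\rhohat(t_0,\cdot)=\rhohat_{\mathrm{in}}$ is equivalent to $\rhohat=\sG_0\big(\fhat-(L_{\rhobar}-\sL_0)\rhohat,\rhohat_{\mathrm{in}}\big)$, and $(L_{\rhobar}-\sL_0)\rhohat=\big(d\frac{d^{*^{\rhobar}}}{u}-d\frac{d^{*^{\rho_0}}}{u_0}\big)\rhohat+A^{\rhobar}\rhohat$ is again exact. By Lemma~\ref{lem:freezingTheCoefficients} the first summand is bounded as an operator $\sXhat|_{I'}\to\sYhat|_{I'}$ by $\fp(C,\dots)\,\|\rhobar-\rho_0\|_{L^\infty(I',W^{1,p})}$, and by Lemma~\ref{lem:lowerOrderTerms} the operator $A^{\rhobar}$ is bounded by $\tau^{1/2}\fp_1(C,\dots)+\fp_2(C,\dots)\|\rhobar\|_{L^2(I',W^{2,p})}$. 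Since $\rhobar\in C^0(I,W^{1,p})$ and $\rhobar\in L^2(I,W^{2,p})$, both bounds tend to $0$ as $\tau\to0$, so for $\tau$ small the affine map $\rhohat\mapsto\sG_0(\fhat-(L_{\rhobar}-\sL_0)\rhohat,\rhohat_{\mathrm{in}})$ is a contraction of $\sXhat|_{I'}$ and Banach's fixed point theorem gives the unique solution on $I'$.

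\emph{Globalization, and the main obstacle.} Cover $I=[0,T]$ by finitely many subintervals $[t_j,t_{j+1}]$, each short enough that the previous step applies with the frozen coefficient $\rhobar(t_j,\cdot)\in B^{1,p}_2$; this is possible because $\rhobar$ is uniformly continuous from $I$ into $W^{1,p}$ and $\sum_j\|\rhobar\|_{L^2((t_j,t_{j+1}),W^{2,p})}^2=\|\rhobar\|_{L^2(I,W^{2,p})}^2$. Solve successively: on $[t_0,t_1]$ with initial value $0$, and on $[t_j,t_{j+1}]$ with initial value $\rhohat(t_j,\cdot)$, which lies in $B^{1,p}_2$ and is exact by the trace theorem for $\sXhat$. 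The pieces agree in $W^{1,p}$ at the nodes, their time derivatives patch to an $L^2(I,L^p)$ section, and each value is exact, so the concatenation lies in $\sXhat$ and solves the Cauchy problem on all of $I$; uniqueness is the first paragraph. The step that needs the most care is the input to the local existence argument: verifying that the frozen, merely-continuous-coefficient operator $\sL_0=d\frac{d^{*^{\rho_0}}}{u_0}$ on the exact two-forms genuinely falls under the divergence-form maximal regularity theory of \cite{Lamberton} (equivalently, is sectorial with the stated domain) — precisely the point flagged in Section~\ref{sec:ste}; once this is in place, the perturbation and patching above are routine.
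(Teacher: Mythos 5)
Your uniqueness argument is fine and is a legitimate alternative to the paper's: the paper gets injectivity from a localized parabolic $L^2$ a priori estimate, while your Gr\"onwall argument uses the nonnegativity of $\int_M \frac1u\abs{d^{*^{\rhobar}}\rhohat}^2\dvol$ on exact forms plus the elliptic estimate for $d+d^{*^{\rhobar}}$; the lower-order and $\p_t B^{\rhobar}$ terms are indeed absorbable with $m\in L^1(I)$ since $\rhobar\in L^2(I,W^{2,p})\cap W^{1,2}(I,L^p)$ and $p>4$. Your use of Lemmas~\ref{lem:freezingTheCoefficients} and~\ref{lem:lowerOrderTerms} to make $L_{\rhobar}-\sL_0$ small on short subintervals, and the patching over a finite cover of $I$, are also unproblematic \emph{given} the solvability of the frozen problem.

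The gap is exactly the input you flag at the end, and it is not a removable technicality: your entire existence argument rests on the assertion that the frozen operator $\sL_0=d\frac{d^{*^{\rho_0}}}{u_0}$ (and, at later nodes, $d\frac{d^{*^{\rhobar(t_j)}}}{u(t_j)}$), with coefficients only in $B^{1,p}_2\hookrightarrow C^0$ and restricted to exact two-forms, admits a solution operator $\sG_0$ with maximal $L^p$ regularity, uniform constants in the interval length, and nonzero initial data in the trace space. This is precisely the statement left as ``TODO: Argument'' in Section~\ref{sec:ste}; it is not covered by \cite{Lamberton} as invoked there, since that theory concerns generators of contraction semigroups on all $L^q$ (sub-Markovian, scalar divergence form), whereas here one has a system constrained to the invariant subspace of exact forms with merely continuous coefficients, and the $L^p$-contraction property is unclear. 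The paper's own proof of Lemma~\ref{lem:cauchy} is built to avoid assuming any such generation result: it uses the $L^p$ maximal-regularity bound only as an a priori estimate (proved by localization as in Lemma~\ref{lem:maximalRegularity}), obtains solvability first for smooth $\rhobar$ and smooth data from the closed-range estimate in $L^2$ together with the dense-image duality argument of Lemma~\ref{lem:denseImage} and parabolic regularity, and then passes to the given non-smooth $\rhobar$ and $\fhat\in\sYhat$ by approximation and weak limits. As written, your proof therefore assumes a solvability statement at least as strong as the lemma itself; to close it you would either have to prove generation/maximal regularity for the frozen non-smooth-coefficient operator on exact forms (for instance by the same smooth-approximation plus duality scheme, at which point you have essentially reproduced the paper's argument), or follow the paper's route directly.
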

\begin{proof}
  Let $\rhohat$ and $\fhat$ be smooth path of smooth exact two-forms with
  support on a compact subset of $I \times \R^4$ satisfying
  $$
  \p_t \rhohat + \triangle \rhohat = \fhat, \qquad \rhohat(t=0, \cdot) =0,
  $$
  where $\triangle$ denotes the standard Laplace operator of the standard
  metric of $\R^4$.
  Then we have from standard parabolic $L^2$-theory the estimate
  $$
  \norm{\rhohat}_{L^2(I, W^{2,2})} + \sup_{t\in I} \norm{\rhohat}_{W^{1,2}} +
  \norm{\p_t \rhohat}_{L^2(I, L^2)} \leq c \norm{f}_{L^2(I, L^2)}.
  $$
  Choosing charts and a subordinate partition of unity for $I\times \R^4$ and
  using the estimate~\eqref{eq:est1ellReg} with $q=2$ shows that there exists
  a polynomial with positive coefficients $\fp(\norm{\rhobar}_{L^\infty(I,
    W^{1,p})})$ such that for all exact $\rhohat \in L^2(I, W^{2,2}) \cap
    W^{1,2}(I, L^2)$ solving~\eqref{eq:LrhoCauchy}
    \begin{multline*}
      \norm{\p_t \rhohat}_{L^2(I,L^2)} + \sup_{t \in
      I}\norm{\rhohat}_{L^2(M,\Lambda^2)} + \norm{\rhohat}_{L^2(I, W^{1,2}(M,
      \Lambda^2))}\\
      \leq
      \fp \left(\norm{\fhat}_{L^2(I, L^2)} +
      \norm{\rhohat}_{L^2(I,L^2)}\right).
    \end{multline*}
  This estimate shows that the operator $\p_t + L_\rhobar + c\,\id$ from the
  exact two-forms in $L^2(I, W^{2,2}) \cap W^{1,2}(I, L^2)$ with initial
  condition $\rhohat(t=0,\cdot) = 0$ to the exact two-forms in $L^2(I, L^2)$
  is injective and has closed image, if the constant $c>0$ is choosen suitably
  big.  If $\rhobar$ is a smooth path of nondegenerate two-forms by
  Lemma~\ref{lem:denseImage} it's image is dense and hence this operator is
  surjective. Therefore for $\rhobar$ smooth
  $$
  \p_t \rhohat + L_\rhobar \rhohat + c \rhohat = \fhat, \qquad
  \rhohat(t=0,\cdot) = 0
  $$
  has a unique exact solution in $L^{2}(I, W^{2,2}) \cap W^{1,2}(I, L^2)$ for
  every exact two-form $\fhat \in L^2(I, L^2)$. From standard parabolic
  regularity theory it follows that this solution is smooth when $\rhobar$ and
  $\fhat$ are smooth. Then $e^{ct}\rhohat$ is the unique smooth solution
  to~\eqref{eq:LrhoCauchy}.

  We argue exactly as in the maximal regularity
  Lemma~\ref{lem:maximalRegularity} for the operator $d^{*^\rho}\frac{d}{u}$
  that there exists a constant $c = c(\norm{\rhobar}_{L^\infty(I, W^{1,p})}) >
  0$ such that for all exact $\rhohat\in C^\infty(I, C^\infty(M, \Lambda^2))$,
  \begin{equation}\label{eq:maxRegEst}
    \norm{\p_t \rhohat }_{L^2(I, L^p)} \leq c (\norm{\p_t \rhohat +
      L_{\rhobar}\rhohat}_{L^2(I, L^p)} + \norm{\rhohat}_{L^2(I, L^p)}).
  \end{equation}
  The assumption~\eqref{eq:maxRegAssumption} can easily be seen to be
  satisfied for $p>4$. The additional lower order terms introduced by
  $A^\rhobar$ can be controlled by the Gagliardo-Nirenberg interpolation
  inequality. To find a solution to~\eqref{eq:LrhoCauchy} in $\sXhat$ we
  choose a sequence of smooth paths of smooth exact two-forms
  $\{\fhat_k\}_{k\in \N} $ and a sequence of smooth paths of smooth nondegenerate
  two-forms $\{\rhobar_k\}_{k\in \N}$ such that
  $$
  \lim_{k\to \infty}\norm{\fhat - \fhat_k}_{L^2(I, L^p)}=0, \quad
  \lim_{k\to\infty} \norm{\rhobar - \rhobar_k}_{C^0(I, W^{1,p})} =0.
  $$
  Then for every $k\geq 0$ there exists a smooth exact solution $\rhohat_k$ to
  $$
  \p_t \rhohat_k + L_{\rhobar_k} \rhohat_k = \fhat_k, \qquad
  \rhohat_k(t=0,\cdot) = 0.
  $$
  The maximal regularity estimate~\eqref{eq:maxRegEst} holds with a uniform
  constant in a neighbourhood of $\rhobar$. Therefore there exists a weakly
  converging subsequence converging to a solution $\rhohat \in \sX$
  of~\eqref{eq:LrhoCauchy}. This proves the lemma.
\end{proof}
\begin{proof}[Proof of Theorem~\ref{thm:semiflow}]\label{proof:semiflow}
We need to show that the Donaldson flow depends smoothly on the initial
conditions. Let $\rho_0 \in \sS^{1,p}_2(M ,\Lambda^2)$ and $\bar{\rho}$ be
the solution to the Donaldson flow in $W^{1,2,p}(M_I, \Lambda^2)$ for the
initial condition $\rho_0 \in \sS^{1,p}_2(M, \Lambda^2)$ on the time
interval $I= [0,T]$. Let us define the following spaces.
\begin{align*}
  \sX_0 := \big\{ \rho \in & L^2(I, W^{2,p}(M, \Lambda^2)) \cap W^{1,2}(I,
  L^p(M, \Lambda^2))\\
  &|\ \forall t: d\rho(t,\cdot) = 0,
  \ \rho(t,\cdot) \wedge \rho(t,\cdot) > 0,
  \ [\rho(t,\cdot)] =a,\\
&\ \rho(t=0,\cdot) = \rho_0\big\}.
\end{align*}
This is the space of symplectic forms in $W^{1,2,p}(M_I, \Lambda^2)$
representing the fixed cohomology class $a\in H^2(M; \R)$ and have the fixed
initial condition $\rho_0$. The formal tangent space to $\sX_0$ at $\rho$ is
given by
\begin{align*}
\sXhat_0 := \{\rhohat \in &L^2(I, W^{2,p}(M, \Lambda^2)) \cap W^{1,2}(I,
L^p(M, \Lambda^2))\\
&|\ \forall t: \rhohat(t,\cdot)\ \text{exact}, \ \rhohat(t=0, \cdot) =0\}.
\end{align*}
The space $\sS^{1,p}_{2,a}(M, \Lambda^2)$ is an open subset of an affine
space, the corresponding vector space is
$$
\sZhat := \{\tauhat \in B^{1,p}_2(M, \Lambda^2) | \ \tauhat \
\text{exact} \}.
$$
There exists a bounded linear extension operator
$$
\sT: \sZhat \to \sXhat
$$
such that
$$
(\sT \tauhat) (0) = \tauhat.
$$
For example we can define $\sT$ as follows. Let $T(t)$ be the semigroup
created by the negative Hodge laplacian with respect to the background
metric $g$ on the exact two-forms in $L^p(M, \Lambda^2)$ with domain the
exact two-forms in $W^{2,p}(M, \Lambda^2)$. The theory developed
in~\cite{Grigoryan} asserts that $t \mapsto T(t)\tauhat$ is a continous map
from $I$ to $\sZhat$.  We define
$$
(\sT\tauhat)(t) := T(t) \tauhat.
$$
Now consider the map
$$
\sF : \sX_0 \times \sZhat \to \sYhat,\qquad \sF(\rho, \tauhat) := \p_t \rho +
\p_t (\sT \tauhat) - d *^{\rho + \sT \tauhat} d \theta^{\rho + \sT
\tauhat}.
$$
Then,
$$
\sF(\bar{\rho}, 0) = 0
$$
and this map is clearly infinitely Fr\'echet differentiable. We compute the
linearization in the first factor at $(\bar{\rho}, 0)$,
$$
d \sF(\bar{\rho}, 0) : \sXhat \to \sYhat, \qquad d\sF (\bar{\rho},0)
\rhohat:= \left.\frac{d}{ds}\right|_{s=0} \sF(\bar{\rho} + s\rhohat,
\tauhat) = \p_t \rhohat + L_{\bar{\rho}} \rhohat,
$$
where $L_{\bar{\rho}}$ is given by equation~\eqref{eq:Lrho}.
By Lemma~\ref{lem:cauchy} the Cauchy problem
$$
\p_t \rhohat + L_{\bar{\rho}} \rhohat = \fhat, \qquad \rhohat(t=0,\cdot) = 0
$$
has a unique solution $\rhohat \in \sXhat$ for every $\fhat \in \sYhat$.
Therefore $d \sF(\bar{\rho}, 0)$ is bijectiv. It follows from the implicit
function theorem for Banach spaces that there exists an open neighborhood $0
\in \sU \subseteq \sZhat$ and a smooth map
$$
\Phi : \sU \to \sX_0
$$
with
$$
\sF(\Phi(\tauhat),\tauhat) = 0, \qquad \Phi(0) = \bar{\rho}.
$$
Therefore,
$$
\p_t(\Phi(\tauhat) + \sT \tauhat) = d *^{\Phi(\tauhat) + \sT \tauhat} d
\theta^{\Phi(\tauhat) + \sT \tauhat}, \qquad (\Phi(\tauhat) + \sT
\tauhat)(t=0,\cdot) = \rho_0 + \tauhat.
$$
In particular the map
$$
\tauhat \mapsto \Phi(\tauhat) + \sT\tauhat
$$
is a smooth map from initial conditions in $\sU$ to solutions of the
Donaldson flow in $W^{1,2,p}(M_I, \Lambda^2)$ with these initial conditions.
This proves the theorem.
\end{proof}

The semiflow property of the Donaldson flow allows one to apply classic
stability analysis results of dynamical systems to the time one map of the
Donaldson flow. In particular we can prove that there is a neighborhood around
the absolute minimum in $\sS_a$ in the Besov space topology which is a local
stable manifold for the absolute minimum. Every solution to the Donaldson flow
whose initial conditions lay within that neighborhood converges to the
absolute minimum.

\begin{corollary}
There exists an open set in the topology of the Besov space $B^{1,p}_2(M,
\Lambda^2)$ which is a local stable manifold around the absolute minimum of
$\sS_a$, i.e. the iterates of the time one map of the Donaldson flow
converge to the absolute minimum for every point in this neighborhood.
\end{corollary}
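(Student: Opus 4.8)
The plan is to run the standard dynamical systems argument around the preferred symplectic form $\om$: establish that the time-one map of the Donaldson flow is smooth near $\om$, that $\om$ is a fixed point, and that the differential of the time-one map at $\om$ has spectral radius strictly less than one; the local stable manifold theorem then produces an entire neighborhood on which the iterates converge to $\om$.

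First I would record the properties of $\om$. By \cite{KROMSAL}, $\om$ is the unique absolute minimum of $\sE$ on $\sS_a$, it is a smooth critical point of the Donaldson flow, and the Hessian of $\sE$ at $\om$ with respect to the Donaldson metric \eqref{eq:metric} is positive definite. Being a critical point, $\om$ is a stationary solution, hence a fixed point of every time map. By Theorem~\ref{thm:semiflow} there is an open neighborhood $\sU$ of $\om$ in $\sS^{1,p}_{2,a}(M,\Lambda^2)$ and a $T>0$ such that $(t,\rho_0)\mapsto\rho(t,\cdot)$ is smooth on $[0,T]\times\sU$; if $T<1$ one writes the time-one map as an $N$-fold composition of the smooth time-$1/N$ map (with $1/N<T$), which by uniqueness of solutions is again the flow and is smooth on a possibly smaller neighborhood of $\om$. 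Thus the time-one map $\Psi_1:\sU'\to\sS^{1,p}_{2,a}(M,\Lambda^2)$, $\rho_0\mapsto\rho(1,\cdot)$, is well defined, smooth, and fixes $\om$ on a neighborhood $\sU'$ of $\om$.

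Next I would compute $d\Psi_1(\om)$. Since $\om$ is a critical point, linearizing the flow along the stationary solution $\om$ gives exactly the linear Cauchy problem $\p_t\rhohat + L_\om\rhohat = 0$ with $L_\om$ the operator of Lemma~\ref{lem:linearization} (the cross terms coming from the $\rho$-dependence of $*^\rho$ and $\theta^\rho$ vanish because $\om$ is critical), so $d\Psi_1(\om) = e^{-L_\om}$ acting on the space $\sZhat$ of exact two-forms in $B^{1,p}_2(M,\Lambda^2)$. At the smooth form $\om$ the principal part of $L_\om$ is $d\frac{d^{*^\om}}{u}$, which on exact two-forms is elliptic with smooth coefficients, $L_\om$ is self-adjoint with respect to $\inner{\cdot}{\cdot}_\om$, and its associated quadratic form is the Hessian of $\sE$ at $\om$; positive definiteness of the Hessian together with ellipticity places the $L^2$-spectrum of $L_\om$ in $[\delta,\infty)$ for some $\delta>0$. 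By the $p$-independence of the spectrum of an elliptic operator with smooth coefficients on a closed manifold, this spectral gap persists on $\sZhat$, so $-L_\om$ generates an analytic semigroup with $\norm{e^{-tL_\om}}\le Ce^{-\delta t}$; in particular $d\Psi_1(\om)=e^{-L_\om}$ has spectral radius at most $e^{-\delta}<1$ and there is no center or unstable subspace.

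Finally, $\Psi_1$ is a smooth self-map of a neighborhood of the fixed point $\om$ whose differential there is, in an equivalent norm, a contraction. By the local stable manifold theorem for such maps there is an open neighborhood $\sU''\subseteq\sU'$ of $\om$ in the $B^{1,p}_2(M,\Lambda^2)$-topology with $\Psi_1(\sU'')\subseteq\sU''$ and $\Psi_1^n\to\om$ uniformly on $\sU''$; since the linearization has no neutral directions, the stable manifold is all of $\sU''$, which is therefore the asserted open set, and every solution of the Donaldson flow starting in $\sU''$ converges to $\om$ along the time-one map. The main obstacle is the step transferring positive definiteness of the Hessian, an $L^2$ statement, to a genuine spectral gap for the analytic semigroup $e^{-tL_\om}$ on the Besov space $B^{1,p}_2$; this rests on the $L^p$-independence of elliptic spectra and on the sectoriality of $L_\om$ restricted to exact forms, both of which use that $\om$, hence the coefficients of $L_\om$, are smooth.
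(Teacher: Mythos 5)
Your proposal is correct and follows the same skeleton as the paper: use the semiflow theorem to get a smooth time-one map fixing $\om$, show its linearization (the time-one solution operator of $\p_t\rhohat+L_\om\rhohat=0$ on exact forms in $B^{1,p}_2$) is a contraction, and conclude by a contraction/stable-manifold argument. Where you differ is in the key step of why the linearization contracts. The paper exploits the special structure at $\om$: since $\om$ is compatible with the background metric, $*^\om=*$, $u_\om=1$ and $\theta^\om=0$, so the lower order part $A^\om$ drops out and $L_\om=dd^{*}$ is exactly the Hodge Laplacian on exact two-forms; the linearized flow is the plain heat flow, whose time-one map on exact forms (which contain no harmonic forms) is a contraction, and the conclusion follows from the mean value theorem plus Banach's fixed point theorem. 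You instead invoke the positive definiteness of the Hessian of $\sE$ from \cite{KROMSAL}, ellipticity and discreteness of the spectrum, and $p$-independence of elliptic spectra to transfer an $L^2$ spectral gap to $\sZhat$. That route is more general (it would apply at any strictly stable critical point, not just the absolute minimum), but it carries extra burdens you correctly flag: the Hessian positivity is stated with respect to the Donaldson metric~\eqref{eq:metric}, which at $\om$ is an $H^{-1}$-type pairing on exact two-forms rather than the $L^2$ inner product, so identifying ``Hessian positive definite'' with a spectral gap of $L_\om$ on $L^2$ requires a little care (it works here precisely because $L_\om=dd^{*}$ is also $L^2$-self-adjoint and positive on exact forms); and the passage from the $L^2$ gap to decay of $e^{-tL_\om}$ on the Besov space is the step the paper sidesteps entirely by its explicit computation. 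Your handling of the case $T<1$ by composing time-$1/N$ maps, and your use of an equivalent norm to make $d\Psi_1(\om)$ a genuine contraction, are both reasonable refinements of points the paper states more briefly.
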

\begin{proof}
By Theorem~\ref{thm:semiflow} there exists an open set around the absolute
minimum $\om$ in $\sS^{1,p}_{2,a}$ such that every solution to the Donaldson
flow with initial condition in this neighborhood exists on the interval
$[0,1]$. Let $\phi^1: B_\om \to \sS^{1,p}_{2,a}$ be the time one map of the
Donaldson flow restricted to a small ball centered at $\om$ within that
neighborhood. We denote
$$
\sZhat := T_\om \sS^{1,p}_{2,a} = \{\tauhat \in B^{1,p}_2(M, \Lambda^2)|\
\tauhat \text{ exact} \}.
$$
Let $\phihat : \sZhat \to \sZhat$ be the linearization of $\phi^1$ at $\om$.
It is given by the solution to the linearized Donaldson flow equation,
\begin{equation}\label{eq:negHeatFlow}
  \p_t \rhohat + L_\om \rhohat = 0, \qquad \rhohat(t=0, \cdot) = \tauhat
  \in \sZhat,
\end{equation}
where $\om \in \sS_a$ is the unique minimum of the energy functional
$$
\sE\left( \rho \right) = \int_M \frac{2 \Abs{\rho^+}^2}{\Abs{\rho^+}^2 -
\Abs{\rho^-}^2}\dvol, \qquad \rho \in \sS_a
$$
and $L_\om$ is given by~\eqref{eq:Lrho}. Recall that $\om$ is a symplectic
form on $M$ compatible with the background metric and $[w] = a \in H^2(M; \R)$.
In particular $\om$ is the unique self-dual symplectic form on $M$
representing the cohomology class $a$.  Since $\om$ is compatible with the
background metric, $*^\om$ coincides with the Hodge star operator of the
background metric, $u_\om = \frac{\dvol_\om}{\dvol}= 1$ and
$$
\theta^\om = \om - \frac{1}{2}\Abs{\om}^2\om = \om - \om = 0.
$$
It follows that the linearized gradient operator $L_\om$ is the
Hodge laplacian,
$$
L_\om = d \frac{d^{*^\om}}{u_\om} + A^\om = d d^{*}.
$$
Thus the solution to~\eqref{eq:negHeatFlow} is the heat flow on $\sZhat$.
This is well know to be a contraction and
$$
\norm{\phihat}_{L(\sZhat,\sZhat)} < 1,
$$
where $\norm{\cdot}_{L{(\sZhat, \sZhat)}}$ denotes the operator norm for
operators on $\sZhat$.  The map $\phi^1$ is $C^\infty$-Fr\'echet
differentiable and from the previous inequality we know that it's derivative
is a contraction on an open ball around $\om$ in $\sS^{1,p}_{2,a}$. It follows
from the mean value theorem that $\phi^1$ is a contraction on this ball.  The
Banach fixed point theorem now implies that the iteration of $\phi^1$ on this
ball converges to a unique fixed point and this clearly is $\om$. This proves
the theorem.
\end{proof}
\begin{remark}
Alternatively, we can prove the existence of an open neighborhood around the
absolute minimum of the energy functional in $\sS^{1,p}_{2,a}(M, \Lambda^2)$
such that the Donaldson flow converges exponentially fast to the absolute
minimum for every initial condition laying in this neighborhood by repeating
the proof of the short time existence theorem~\ref{thm:localExistence},
where one applies the Banach fixed point theorem to the analogous iteration
on a small ball around zero in the space
\begin{align*}
  \sX_{\delta,\epsilon,\rho_0} := \{ \rhohat \in W^{1,2,p}(M_{\R_+},
  \Lambda^2) |\ &\rhohat \text{ exact},\\
  &\rhohat(t=0,\cdot) = 0,\\
  &\norm{e^{\delta t}(\rhotilde_0 +
\rhohat - \om}_{W^{1,2,p}} < \epsilon\}.
\end{align*}
Here $\rhotilde_0$ is a extension of the initial condition $\rho_0 \in
\sS^{1,p}_2$ to $W^{1,2,p}(M, \Lambda^2)$, $\om$ is the absolute minimum of
the energy functional and $\delta,\epsilon >0$ are real constants that have
to be choosen sufficiently small. The norm $\norm{\cdot}_{W^{1,2,p}}$ is
defined by~\eqref{def:norm}.
\end{remark}


\appendix
\section{Products in Sobolev Spaces.}\label{app:sobolev}
A proper open connected subset $\Om \subset \R^n$ is called a {\it smooth
domain}\ if for every $x\in \p \Om$ there is a ball $B = B(x)$ and a smooth
diffeomorphism $\psi$ of $B $ onto $D \subset \R^n$ such that
\begin{gather*}
\psi (B \cap \Om) \subset \R^n_+, \qquad \psi(B \cap \p \Om) \subset \p
\R^n_+.
\end{gather*}
If $\psi$ is not smooth but of class $C^k$ then the domain is said to have $C^k$
boundary.
\begin{proposition}[{\bf Gagliardo-Nirenberg}] \label{prop:gagliardo}
Let $\Om\subset \R^n$ be a bounded open domain with $C^k$ boundary. Suppose
that $j,k \geq 0$ are integers with $j<k$ and $1 \leq p,q,r \leq \infty$
with $k-n/q + n/r \geq 0$ and
$$
j - \frac{n}{p} = \lambda \left(k - \frac{n}{q}\right) + (1 - \lambda)
\left(- \frac{n}{r}\right), \qquad \frac{j}{k} \leq \lambda \leq 1.
$$
If $(k-j)q = n$ assume also that $\lambda \neq 1$. Then there exists a
constant $c>0$ such that
$$
\norm{\p^j u}_{L^p}\leq c \norm{\p^k u}_{L^q}^\lambda
\norm{u}_{L^r}^{1-\lambda}
$$
for $u \in W^{k,q}(\Om)$.
\end{proposition}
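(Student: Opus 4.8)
The plan is to transplant the estimate to $\R^n$, establish it there for smooth functions, and then push it back. Since $\p\Om$ is of class $C^k$ and $\Om$ is bounded, there is a bounded linear extension operator $E$ which is simultaneously bounded $W^{m,s}(\Om)\to W^{m,s}(\R^n)$ for every $0\le m\le k$ and every $1\le s\le\infty$ (a Stein-type extension, or a finite reflection-and-cutoff construction). Applying $E$ to $u\in W^{k,q}(\Om)\cap L^r(\Om)$ and restricting the resulting inequality to $\Om$, it suffices to prove
\[
  \norm{\p^j u}_{L^p(\R^n)}\le c\,\norm{\p^k u}_{L^q(\R^n)}^{\lambda}\,\norm{u}_{L^r(\R^n)}^{1-\lambda}
\]
for all $u$ in a dense class, say $u\in C_c^\infty(\R^n)$; the general case then follows by a mollify-and-cutoff approximation converging in $W^{k,q}\cap L^r$. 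A sanity check: under the rescaling $u_R:=u(\cdot/R)$ both sides carry the same power of $R$ precisely because of the exponent relation, so dimensional analysis alone yields nothing and the inequality has genuine content.

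The heart of the matter is the first-order (one integration by parts) estimate together with the elementary Sobolev embedding. First, for $v\in C_c^\infty(\R)$ one has $\int_{\R}\Abs{v'}^p\,dt\le (p-1)\int_{\R}\Abs{v}\,\Abs{v'}^{p-2}\,\Abs{v''}\,dt$ by integrating $\int v'\,(v'\Abs{v'}^{p-2})$ by parts (regularising near $\{v'=0\}$); running this in each coordinate direction, using Fubini in the remaining $n-1$ variables and then H\"older, gives $\norm{\p_i u}_{L^p(\R^n)}^2\le c\,\norm{\p_i^2 u}_{L^q}\,\norm{u}_{L^r}$ whenever the exponents satisfy the relation for $j=1$, $k=2$, $\lambda=\tfrac12$. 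Second, the Gagliardo--Nirenberg--Sobolev inequality $\norm{u}_{L^{n/(n-1)}}\le c\prod_i\norm{\p_i u}_{L^1}^{1/n}$, proved by writing $u$ as a line integral in each direction and applying the Loomis--Whitney iterated H\"older argument, applied to $\Abs{u}^{\gamma}$ for a suitable $\gamma>1$ and combined with H\"older, produces the full one-derivative interpolation family $\norm{u}_{L^p}\le c\,\norm{\nabla u}_{L^q}^{\mu}\,\norm{u}_{L^r}^{1-\mu}$ for all admissible $\mu$.

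From these two building blocks I would reach the general statement by a two-step bootstrap. Iterating the second-order estimate along the chain $\p^{j}\!\to\!\p^{j\pm1}$ yields the diagonal case $\norm{\p^j u}_{L^p}\le c\,\norm{\p^k u}_{L^q}^{j/k}\,\norm{u}_{L^r}^{1-j/k}$, which is exactly $\lambda=j/k$ and is always an admissible exponent configuration. The other extreme, $\lambda=1$, is the homogeneous Sobolev embedding $\norm{\p^j u}_{L^p}\le c\,\norm{\p^k u}_{L^q}$ with $\tfrac1p=\tfrac1q-\tfrac{k-j}{n}$, valid exactly when $(k-j)q\le n$; at the borderline $(k-j)q=n$ one would need $p=\infty$ and the embedding \emph{fails}, which is precisely why the statement excludes $\lambda=1$ in that case. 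Finally H\"older's inequality interpolates between the $\lambda=j/k$ estimate and the $\lambda=1$ estimate to cover the whole interval $j/k\le\lambda\le1$, and the hypothesis $k-n/q+n/r\ge0$ is exactly what makes these endpoint exponents legitimate. The main obstacle, and essentially the only place requiring care, is the bookkeeping of the borderline exponents: checking that every admissible $(\lambda,p,q,r)$ genuinely arises as a H\"older interpolation of the two extreme cases, correctly identifying which configurations are attainable, and isolating the single excluded endpoint $(k-j)q=n$, $\lambda=1$ (whose true substitute is BMO/exponential integrability, not needed here). Everything else is routine integration by parts, Fubini and H\"older.
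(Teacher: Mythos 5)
The paper does not actually prove this proposition: its ``proof'' is a one-line citation to Friedman's textbook \cite{FRIEDMAN}. Your sketch reconstructs precisely the classical Nirenberg argument that this reference contains, so in substance you are supplying the proof the paper defers to rather than taking a genuinely different route. The outline is sound: extend from the bounded $C^k$ domain to $\R^n$ by a Stein-type extension operator, reduce to smooth compactly supported functions, prove the one-dimensional integration-by-parts estimate giving the case $j=1$, $k=2$, $\lambda=\tfrac12$, prove the $L^{n/(n-1)}$ estimate by the line-integral/iterated H\"older argument and apply it to powers $\Abs{u}^{\gamma}$ to get the full one-derivative family, iterate to reach the diagonal case $\lambda=j/k$, use the homogeneous Sobolev embedding for $\lambda=1$, and interpolate in $L^p$ between the two endpoint estimates for intermediate $\lambda$, with the borderline $(k-j)q=n$, $\lambda=1$ correctly identified as the excluded configuration. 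What each approach buys is clear: the paper keeps the appendix short by outsourcing a standard analytic fact, while your version makes the appendix self-contained at the cost of the exponent bookkeeping. If you were to write it out in full, the points needing care (none of them fatal) are: the extension operator must be bounded simultaneously on all the spaces $W^{m,s}$ that occur, which Stein's construction for Lipschitz domains provides since a bounded $C^k$ domain with $k\ge 1$ is Lipschitz; the density of $C_c^\infty(\R^n)$ fails when $q=\infty$ or $r=\infty$, so the approximation step has to be replaced there by direct mollification estimates; and the induction establishing the diagonal case for mixed (not just pure) derivatives, together with the verification that every admissible pair $(\lambda,p)$ is genuinely obtained by H\"older interpolation of the two endpoint estimates, is exactly the bookkeeping you flag as the main obstacle --- it is routine but constitutes most of the written proof.
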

\begin{proof} A proof of this proposition can be found in \cite{FRIEDMAN}.
\end{proof}
\begin{lemma}[{\bf Products in Sobolev Spaces}]\label{lem:sobolevProd}
\smallskip\noindent{\bf (i)}
Let $M$ be a closed manifold of dimension $n$. Let $f,g \in C^\infty(M,
\R)$.  Let $k, p \in \N$ such that $k - \frac{n}{p} > 0$. There exists a
constant $c>0$ such that
$$
\norm{f g}_{W^{k,p}} \leq c \left(\norm{f}_{L^\infty} \norm{g}_{W^{k,p}} +
\norm{f}_{W^{k,p}} \norm{g}_{L^\infty}\right).
$$

\smallskip\noindent{\bf (ii)}
Let $E \to M$ be a smooth vector bundle over $M$. Let $k - \frac{n}{p} > 0$.
Let $f \in C^\infty(E,\R)$ and $u \in C^\infty(M, E)$. Let $U\subset E$ be
an open set containing the image of $u$. Then there exists a constant $c>0$
such that
\begin{equation*}
\norm{f \circ u}_{W^{k,p}}
\leq
c \Abs{f}_{C^k(U)} \left(1 + \norm{u}_{W^{k,p}}\big(1 +
\norm{u}_{L^\infty}\big)^{k-1}\right).
\end{equation*}
\end{lemma}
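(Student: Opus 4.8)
The plan is to reduce both statements to the Euclidean setting by a finite coordinate cover of $M$ with a subordinate partition of unity (for part~(ii) one also trivializes $E$ over each chart, so that $u$ becomes an $\R^m$-valued map and $f\circ u$ the composition of a smooth function with a vector-valued map), after which Proposition~\ref{prop:gagliardo} is available. A useful preliminary observation is that, applying Gagliardo--Nirenberg with $q=r=p$, one has the norm equivalence $\norm{u}_{W^{k,p}}\le c\bigl(\norm{\p^k u}_{L^p}+\norm{u}_{L^p}\bigr)$; hence it suffices to control only the top-order term $\norm{\p^k(\cdot)}_{L^p}$ together with the plain $L^p$-norm, which circumvents the awkward index bookkeeping that would arise from expanding the intermediate-order derivatives directly.

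For part~(i), I would first bound $\norm{fg}_{L^p}\le\norm{f}_{L^\infty}\norm{g}_{L^p}\le\norm{f}_{L^\infty}\norm{g}_{W^{k,p}}$. Then I expand the top-order derivative by the Leibniz rule, $\p^k(fg)=\sum_{j=0}^k\binom{k}{j}\p^j f\,\p^{k-j}g$ schematically. The extreme terms $j=0$ and $j=k$ are estimated directly by H\"older against $L^\infty$. For $0<j<k$, H\"older's inequality gives $\norm{\p^j f\,\p^{k-j}g}_{L^p}\le\norm{\p^j f}_{L^{p_1}}\norm{\p^{k-j}g}_{L^{p_2}}$ with $1/p_1=j/(kp)$ and $1/p_2=(k-j)/(kp)$, so that $1/p_1+1/p_2=1/p$; then Gagliardo--Nirenberg, interpolating between the $W^{k,p}$-seminorm and $L^\infty$ with parameter $\lambda=j/k$ (its hypotheses hold precisely because $k-n/p>0$), yields $\norm{\p^j f}_{L^{p_1}}\le c\,\norm{\p^k f}_{L^p}^{j/k}\norm{f}_{L^\infty}^{1-j/k}$ and similarly for $g$ with exponent $(k-j)/k$. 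Writing $A=\norm{\p^k f}_{L^p}$, $B=\norm{f}_{L^\infty}$, $C=\norm{\p^k g}_{L^p}$, $D=\norm{g}_{L^\infty}$, the product of the two interpolation bounds is $(AD)^{j/k}(BC)^{1-j/k}\le AD+BC$ by Young's inequality, which is exactly $\norm{\p^k f}_{L^p}\norm{g}_{L^\infty}+\norm{f}_{L^\infty}\norm{\p^k g}_{L^p}$. Summing over $j$ and combining with the $L^p$-bound and the norm equivalence gives~(i).

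For part~(ii) I would run the same scheme with the Leibniz rule replaced by the Fa\`a di Bruno formula. One has $\norm{f\circ u}_{L^p}\le\Abs{f}_{C^0(U)}\,\Vol(M)^{1/p}\le c\,\Abs{f}_{C^k(U)}$, which accounts for the leading ``$1$'' in the claimed bound. For the top-order term, Fa\`a di Bruno writes $\p^k(f\circ u)$ as a finite sum of terms $c\,\bigl((\p^m f)\circ u\bigr)\prod_{i=1}^m\p^{\beta_i}u$ with $1\le m\le k$, with $\sum_{i=1}^m\beta_i$ of order $k$ and each $|\beta_i|\ge 1$; the factor $(\p^m f)\circ u$ has $L^\infty$-norm at most $\Abs{f}_{C^m(U)}\le\Abs{f}_{C^k(U)}$ since $u(x)\in U$. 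For the remaining product, H\"older with $1/p_i=|\beta_i|/(kp)$ (these sum to $1/p$ because $\sum|\beta_i|=k$) followed by Gagliardo--Nirenberg with parameter $\lambda_i=|\beta_i|/k$ gives $\norm{\prod_i\p^{\beta_i}u}_{L^p}\le c\,\norm{\p^k u}_{L^p}^{\sum|\beta_i|/k}\norm{u}_{L^\infty}^{m-\sum|\beta_i|/k}=c\,\norm{\p^k u}_{L^p}\norm{u}_{L^\infty}^{m-1}$; since $1\le m\le k$ we have $\norm{u}_{L^\infty}^{m-1}\le(1+\norm{u}_{L^\infty})^{k-1}$. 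Summing the finitely many terms and adding the $L^p$-bound yields $\norm{f\circ u}_{W^{k,p}}\le c\,\Abs{f}_{C^k(U)}\bigl(1+\norm{u}_{W^{k,p}}(1+\norm{u}_{L^\infty})^{k-1}\bigr)$, as claimed.

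The step I expect to require the most care is verifying, in each application of Proposition~\ref{prop:gagliardo}, that the chosen H\"older exponents $p_i$ together with the interpolation parameters $\lambda=|\beta|/k$ satisfy all of its hypotheses (in particular $1\le p_i<\infty$, the dimensional balance $k-n/p\ge0$, and the excluded endpoint case), and then that the Young-inequality recombination produces \emph{exactly} the bilinear form in~(i) and the factor $(1+\norm{u}_{L^\infty})^{k-1}$ in~(ii), rather than a cruder product of two full $W^{k,p}$-norms; the observation that one need only estimate the top-order seminorm plus the $L^p$-norm is precisely what keeps this bookkeeping clean. The reduction to charts and the bundle trivialization in~(ii) are routine and are absorbed into the constants depending on $\dim M$, $\Vol(M)$, $k$, $p$.
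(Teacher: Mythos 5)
Your proposal is correct and follows essentially the same route as the paper's proof: the same Leibniz/Fa\`a di Bruno expansion, H\"older with exponents $kp/j$, Gagliardo--Nirenberg interpolation against $L^\infty$ with $\lambda=j/k$, and a Young-inequality recombination. The only cosmetic differences are that you reduce to the top-order seminorm plus the $L^p$-norm first (the paper estimates each intermediate order directly, with the same computation) and you invoke Fa\`a di Bruno where the paper derives the same derivative structure by induction.
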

\begin{proof}
It is enough to prove these statements over an open domain of $\R^n$ with
smooth boundary. We prove (i). Let $f, g \in C^\infty(\Om, \R)$. Let $0 \leq
i\leq k$ and let $\p^i = \p^\alpha$ for an arbitrary multi-indice $\alpha
\in \N^n$ with $\Abs{\alpha} = i$.  By the product rule
$$
\p^i (fg) = \sum_{\ell=0}^i (\p^\ell f) (\p^{i - \ell}g) = f (\p^i g) +
(\p^i f) g + \sum_{\ell = 1}^{i-1} (\p^\ell f) (\p^{i - \ell}g).
$$
Clearly,
$$
\norm{f (\p^i g) +(\p^i f) g }_{L^p} \leq
\norm{f}_{L^\infty}\norm{g}_{W^{k,p}} +\norm{f}_{W^{k,p}}\norm{g}_{L^\infty}.
$$
Let $s = \frac{pk }{\ell}, t = \frac{pk}{k-\ell}$. Then by the H\"older
inequality and the Gagliardo-Nirenberg interpolation
inequality~\ref{prop:gagliardo}
\begin{equation*}
\begin{split}
  \norm{(\p^\ell f) (\p^{i - \ell}g)}_{L^p}
  &\leq
  \norm{\p^\ell f}_{L^s}\norm{\p^{i - \ell}g}_{L^t}\\
  &\leq
  c_1 \norm{f}^{\lambda_1}_{W^{k,p}}\norm{f}_{L^\infty}^{1-\lambda_1}
  \norm{g}_{W^{k,p}}^{\lambda_2} \norm{g}_{L^\infty}^{1- \lambda_2}
\end{split}
\end{equation*}
for
\begin{equation*}
\lambda_1 = \frac{\ell - \frac{n}{s}}{k - \frac{n}{p}} = \frac{\ell}{k},
\qquad \lambda_2 = \frac{k - \ell - \frac{n}{t}}{k-\frac{n}{p}} = \frac{k
- \ell} {k},\qquad 1 \leq \ell \leq i - 1.
\end{equation*}
By Young's inequality and $\lambda_1 + \lambda_2 = 1$
\begin{equation*}
\begin{split}
  \norm{(\p^\ell f) (\p^{i - \ell}g)}_{L^p}
  &\leq
  c_1 \norm{f}^{\lambda_1}_{W^{k,p}}
  \norm{g}_{L^\infty}^{\lambda_1}\norm{f}_{L^\infty}^{\lambda_2}
  \norm{g}_{W^{k,p}}^{\lambda_2}\\
  &\leq
  c_2 (\norm{f}_{L^\infty}
  \norm{g}_{W^{k,p}} + \norm{f}_{W^{k,p}}
  \norm{g}_{L^\infty}).
\end{split}
\end{equation*}
This proves (i).

We prove (ii).
Let $U\subset E$ be an open set such that $u(M) \subseteq U$. Clearly
$$
\norm{f\circ u}_{L^p} \leq \left({\Vol(M)}\right)^{\frac{1}{p}}
\Abs{f}_{C^0(U)}.
$$
Let $\p^i = \p^\alpha$ for an arbitrary multi-indice $\alpha \in \N^n$ with
$\Abs{\alpha} = i$. We claim that $\p^i f(u)$ equals the sum of terms of the
form
\begin{equation}\label{eq:productForm}
\p^{\ell_0} f(u)\cdot (\p^{1 }u)^{\ell_1} \cdot
(\p^{2}u)^{\ell_2}
\cdots (\p^{i}u)^{\ell_i}
\end{equation}
for $i \geq 1$, $1\leq \ell_0 \leq i$ and
$\sum_{j = 1}^i j \ell_j =
i$. By the chain rule $ \p f(u) = \p f(u) \p u$ and hence the claim holds true
for $i=1$. Suppose the claim is true for an $i \geq 1$. Then
\begin{equation*}
\begin{split}
  \p^{i + 1} (f (u))
  &= \p \p^{i} (f(u)) \\
  &= \p \left(\p^{\ell_0} f (u) \left((\p^{1 }u)^{\ell_1}
  (\p^{2}u)^{\ell_2}
  \cdots (\p^{i}u)^{\ell_i}\right)\right)\\
  &=
  \p^{\ell_0 + 1 } f(u) \left((\p^{1 }u)^{\ell_1 + 1}
  (\p^{2}u)^{\ell_2}
  \cdots (\p^{i}u)^{\ell_i}\right) \\
  &\qquad
  + \p^{\ell_0}f (u) \p \left((\p^{1 }u)^{\ell_1}
  (\p^{2}u)^{\ell_2}
  \cdots (\p^{i}u)^{\ell_i}\right)
\end{split}
\end{equation*}
and the summands of the last expression are again of the claimed form. This
proves the claim. By the H\"older inequality
\begin{multline*}
\norm{\p^{\ell_0} f(u) (\p^{1 }u)^{\ell_1}
(\p^{2}u)^{\ell_2} \cdots (\p^i u)^{\ell_i}}_{L^p} \\
\leq \Abs{f}_{C^{\ell_0}(U)}\norm{\p^1 u}_{L^{s_1}}^{\ell_1}\norm{
\p^2 u}_{L^{s_2}}^{\ell_2} \cdots \norm{\p^i u}_{L^{s_i}}^{\ell_i}
\end{multline*}
for $s_j = \frac{p \cdot i} {j}$. Let
$$
\lambda_j = \frac{j - \frac{n}{s_j} }{i - \frac{n}{p}} = \frac{j}{i}.
$$
By the Gagliardo-Nirenberg interpolation
inequality
\begin{equation*}
\begin{split}
  \norm{\p^1 u}_{L^{s_1}}^{\ell_1}\norm{
  \p^2 u}_{L^{s_2}}^{\ell_2} \cdots \norm{\p^i u}_{L^{s_i}}^{\ell_i}
  &\leq
  c_3
  \norm{u}_{W^{i, p}}^{\ell_1 \lambda_1}
  \norm{u}_{L^\infty}^{\ell_1 (1- \lambda_1)} \cdots \norm{u}_{W^{i,
  p}}^{\ell_i \lambda_i} \norm{u}_{L^\infty}^{\ell_i (1 - \lambda_i)}\\
  &\leq
  c_3 \norm{u}_{W^{k,p}} \norm{u}_{L^\infty}^{(\sum_{j=1}^i \ell_j) - 1}.
\end{split}
\end{equation*}
Since $\sum_{j = 1}^ i \ell_j \leq i$, $i \leq k$ and
$\ell_0 \leq k$  this shows that
\begin{equation*}
\begin{split}
  \norm{f \circ u}_{W^{k,p}}
  &\leq
  c_4 \Abs{f}_{C^k(U)} \left(1 + \norm{u}_{W^{k,p}} +
  \norm{u}_{W^{k,p}}\norm{u}_{L^\infty} + \norm{u}_{W^{k,p}}
  \norm{u}_{L^\infty}^2 +\right.\\
  &\left. \cdots +
  \norm{u}_{W^{k,p}}\norm{u}_{L^\infty}^{k-1}\right)
\end{split}
\end{equation*}
for a constant $c_4 >0$. This proves (ii).
\end{proof}
Recall that we defined
$$
\norm{u}_{W^{r, k,p}(M_I)} := \sum_{\substack{2s + \ell \leq
k \\ s \leq r}} \norm{\p_t^s
\p^\ell u}_{L^{p,2}\left( M_I \right)}.
$$
for a function $u\in C^\infty(M_I,\R)$ and an open interval $I\subseteq \R$.
\begin{corollary}\label{cor:parabolicProduct}
Let $1 - \frac{n}{p} > 0$, $p\geq 2$, $k\geq 2$, $s\geq 1$.  Let $u,v \in
C^\infty(M_I, \R)$ for an open interval $I\subseteq \R$. Then
\begin{equation*}
\norm{u v }_{W^{s,k,p}}
\leq \norm{u}_{W^{s,k,p}}\norm{v}_{W^{s,k,p}} \end{equation*}
\end{corollary}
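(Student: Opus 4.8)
The plan is to prove the estimate (with a constant depending only on $n,p,k$ and the geometry, suppressed per our conventions) by expanding $\p_t^a\p^\ell(uv)$ with the Leibniz rule and checking that every bilinear summand has the shape (low--order factor)$\times$(controlled factor), where ``low--order'' is measured in the \emph{parabolic} scale in which a $t$--derivative counts as two spatial derivatives. The low--order factor gets thrown into $L^\infty(M_I)$ via a parabolic trace embedding, and the other factor is, by construction, one of the norms defining $\norm{\cdot}_{W^{s,k,p}}$. Exactly one bilinear term fails to be of this shape, and it occurs only when $k=2$; it will be handled by an elementary interpolation. I would work directly on the closed manifold $M$: the Leibniz rule, H\"older's inequality, Gagliardo--Nirenberg (Proposition~\ref{prop:gagliardo}, in the manifold form used in Lemma~\ref{lem:sobolevProd}) and the parabolic trace embedding are all available there; if one instead localizes in charts, the cutoff commutators carry strictly fewer spatial derivatives and are lower order, exactly as in the proof of Lemma~\ref{lem:sobolevProd}.

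Two preliminary facts. First, $\p_t^{b}\colon W^{r,k,p}(M_I)\to W^{r-b,k-2b,p}(M_I)$ is bounded by inspection of~\eqref{def:norm}; composing with the parabolic trace embedding $W^{r',k',p}(M_I)\hookrightarrow C^0\bigl(I,B^{k'-1,p}_2(M)\bigr)$ (valid for $k'\geq2$, $r'\geq1$, $p>n$; see the Remark after~\eqref{def:norm}), with the Besov embedding $B^{j,p}_2(M)\hookrightarrow C^0(M)$ for $j\geq1$ (using $1-\tfrac np>0$), and with $\p^{m'}\colon B^{j,p}_2(M)\to B^{j-m',p}_2(M)$, I obtain: if $b',m'\geq0$ with $2b'+m'\leq k-2$ and $b'\leq s-1$, then (recall $1\leq s\leq\lfloor k/2\rfloor$)
\[
\norm{\p_t^{b'}\p^{m'}v}_{L^\infty(M_I)}\leq C\,\norm{v}_{W^{s,k,p}(M_I)};
\]
the case $b'=m'=0$ already gives $\norm{v}_{L^\infty(M_I)}\leq C\norm{v}_{W^{s,k,p}}$ (only $k\geq2$ is used there). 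Second, whenever $2b+m\leq k$ and $b\leq s$, the quantity $\norm{\p_t^{b}\p^{m}u}_{L^{p,2}(M_I)}$ is literally one of the summands defining $\norm{u}_{W^{s,k,p}}$, hence is $\leq\norm{u}_{W^{s,k,p}}$.

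Now fix $(a,\ell)$ with $2a+\ell\leq k$, $a\leq s$, and write $\p_t^a\p^\ell(uv)$ as a finite sum of terms $(\p_t^{b}\p^{m}u)\,(\p_t^{b'}\p^{m'}v)$ with $b+b'=a$ and $m+m'=\ell$; set $\mu:=2b+m$, $\nu:=2b'+m'$, so $\mu+\nu=2a+\ell\leq k$ and $b,b'\leq a\leq s$. The key combinatorial point is that, unless $k=2$ and $\mu=\nu=1$, one of the two factors satisfies the hypothesis of the preliminary bound. Indeed, the $u$--factor violates it only if $\mu\geq k-1$ or $b=s$, and if $b=s$ then $b'=0$ and $\mu=2s+m\geq2$; symmetrically for $v$. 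Checking the combinations: $b=b'=s$ is impossible ($a=b+b'=2s\leq s$); $\{b=s,\ \nu\geq k-1\}$ gives $\mu+\nu\geq2+(k-1)>k$, and symmetrically $\{\mu\geq k-1,\ b'=s\}$; finally $\{\mu\geq k-1,\ \nu\geq k-1\}$ forces $2k-2\leq k$, i.e. $k=2$ and then $\mu=\nu=1$. For every non-exceptional term I place the admissible factor into $L^\infty(M_I)$, keep the other in $L^{p,2}(M_I)$, and use $\norm{fg}_{L^{p,2}(M_I)}\leq\norm{f}_{L^\infty(M_I)}\norm{g}_{L^{p,2}(M_I)}$ to get $\leq C\norm{u}_{W^{s,k,p}}\norm{v}_{W^{s,k,p}}$. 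When $k=2$ we have $s=1$, and $\mu=\nu=1$ forces $b=b'=0$, $m=m'=1$, so the sole exceptional term is $(\p_i u)(\p_j v)$ inside $\p^2(uv)$; here I interpolate pointwise in $t$: by H\"older in $x$ and Gagliardo--Nirenberg with the triple $(L^{2p},W^{2,p},L^\infty)$ — whose interpolation exponent is exactly $\lambda=\tfrac12$ since $p>n$ — one has $\norm{\p u(t,\cdot)}_{L^{2p}(M)}\leq C\norm{u(t,\cdot)}_{W^{2,p}(M)}^{1/2}\norm{u(t,\cdot)}_{L^\infty(M)}^{1/2}$, likewise for $v$, so after squaring, applying Cauchy--Schwarz in $t$ and bounding $\norm{u}_{L^\infty(M_I)}\leq C\norm{u}_{W^{1,2,p}}$ one finds $\norm{(\p_i u)(\p_j v)}_{L^{p,2}(M_I)}\leq C\norm{u}_{W^{1,2,p}}\norm{v}_{W^{1,2,p}}$. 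Summing over the finitely many $(a,\ell)$ and all Leibniz terms finishes the proof.

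The step I expect to be the main obstacle is the combinatorial dichotomy just sketched: one has to check carefully that the \emph{parabolic} order of at least one factor in each Leibniz term is $\leq k-2$ with at most $s-1$ time derivatives (so that it embeds into $L^\infty(M_I)$), and to isolate correctly the single exception at $k=2$. Everything beyond that is routine — H\"older's and Cauchy--Schwarz's inequalities, one invocation of Gagliardo--Nirenberg, and the parabolic trace embedding already recorded in the text.
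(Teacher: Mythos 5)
Your proof is correct, and it takes a recognizably different route from the paper's, even though the underlying toolkit (Leibniz, H\"older with one factor in $L^\infty$, a parabolic trace embedding, Gagliardo--Nirenberg) is the same. The paper expands only the time derivatives, splits into the cases $k-2r=0$, $1\leq k-2r<k$ and $r=0$, and at each fixed $t$ delegates the spatial products to the estimate of Lemma~\ref{lem:sobolevProd}~(i), so the Gagliardo--Nirenberg interpolation is invoked (implicitly, inside that lemma) for every balanced spatial term; the $L^\infty$ control it needs is only the embedding $L^2(I,W^{2,p})\cap W^{1,2}(I,L^p)\hookrightarrow C^0(I,W^{1,p})$ applied to $u$, $v$ and their time derivatives. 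You instead perform the full space--time Leibniz expansion, measure each factor in the parabolic scale $\mu=2b+m$, and prove the clean dichotomy that in every term one factor has parabolic order $\leq k-2$ with at most $s-1$ time derivatives --- hence lands in $L^\infty(M_I)$ via the trace into $C^0(I,B^{k-2b'-1,p}_2)$ and the Besov embedding --- with the single exception $(\p u)(\p v)$ at $k=2$, which you dispose of by one application of Proposition~\ref{prop:gagliardo} with $\lambda=\tfrac12$ and Cauchy--Schwarz in $t$ (your exponent check and the verification that the exceptional configuration forces $k=2$, $s=1$, $b=b'=0$, $m=m'=1$ are both correct). What your organization buys is transparency and economy: the splitting into an $L^\infty$ factor and a norm summand is made explicit term by term, the interpolation is localized to one place, and the time regularity is exploited through the Besov trace to absorb up to $k-2$ spatial derivatives in $L^\infty$; what the paper's version buys is brevity given Lemma~\ref{lem:sobolevProd}, since the spatial bookkeeping is already packaged there. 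Note only that your preliminary bound leans on the trace embedding $W^{r',k',p}(M_I)\hookrightarrow C^0(I,B^{k'-1,p}_2(M))$ and on $B^{j,p}_2(M)\hookrightarrow C^0(M)$ for $j\geq 1$, $p>n$; the first is exactly item~3 of the paper's remark (stated there without proof, just as the paper's own $C^0(I,W^{1,p})$ embedding is), and the second is standard, so this is an acceptable level of reliance.
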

\begin{proof}
For all $1 \leq r \leq s$ and a fixed $t\in I$
\begin{multline*}
\norm{\p_t^r (uv)}_{W^{k-2r,p}}\\
\leq
\norm{(\p_t^r u) v }_{W^{k-2r,p}} + \norm{u \p_t^r v}_{W^{k-2r,p}}
+
\sum_{i=1}^{r-1} \norm{(\p_t^i u) \p_t^{r- i} v}_{W^{k-2r,p}}.
\end{multline*}
If $k-2r=0$, then
\begin{equation*}
\begin{split}
  &\norm{(\p_t^r u) v }_{L^2(I, L^p)} + \norm{u \p_t^r v}_{L^2(I, L^p)}\\
  &\qquad\leq
  \norm{\p_t^ru}_{L^2(I, L^p)}\norm{v}_{L^\infty(I, L^\infty)} +
  \norm{u}_{L^\infty(I, L^\infty)}\norm{\p_t^r v}_{L^2(I, L^p)}\\
  &\qquad\leq
  \norm{u}_{W^{s,k,p}}\norm{v}_{W^{s,k,p}}.
\end{split}
\end{equation*}
and for $1 \leq i \leq r -1 $
\begin{equation*}
\norm{(\p_t^i u) \p_t^{r- i} v}_{L^2(I, L^p)} =
\norm{\p_t^i u}_{L^\infty(I, L^\infty)} \norm{\p_t^{r- i}
v}_{L^2(I, L^p)}
\leq
\norm{u}_{W^{s,k,p}}\norm{v}_{W^{s,k,p}}.
\end{equation*}
Here we use that for $p\geq 2$ the identity operator on real smooth
functions on $I\times M$ with compact support extends to a bounded linear
operator
$$
L^2(I, W^{2,p}) \cap W^{1,2}(I, L^p) \to C(I, W^{1,p})
$$
and hence for $1 - \frac{n}{p}>0$ and $s\geq 1$
$$
\norm{u}_{L^\infty(I, L^\infty)} \leq \norm{u}_{L^\infty(I, W^{1,p})} \leq \norm{u}_{W^{s,k,p}}.
$$
and likewise for $1\leq i \leq r-1$
$$
\norm{\p_t^i u}_{L^\infty(I, L^\infty)} \leq \norm{\p_t^i u}_{L^\infty(I, W^{1,p})} \leq
\norm{u}_{W^{s,k,p}}.
$$
If $1 \leq k - 2r < k$ then
\begin{equation*}
\begin{split}
&\norm{(\p_t^r u) v }_{W^{k-2r,p}} + \norm{u \p_t^r v}_{W^{k-2r,p}}\\
&\qquad\qquad\qquad\qquad\leq
\norm{\p_t^ru}_{W^{k-2r,p}}\norm{v}_{L^\infty} +
\norm{\p_t^ru}_{L^\infty}\norm{v}_{W^{k-2r,p}}\\
&\qquad\qquad\qquad\qquad \qquad+
\norm{u}_{W^{k-2r,p}}\norm{\p_t^r v}_{L^\infty} +
\norm{u}_{L^\infty}\norm{\p_t^r v}_{W^{k-2r,p}}\\
\end{split}
\end{equation*}
and
\begin{equation*}
\begin{split}
&\norm{(\p_t^r u) v }_{L^2(I,W^{k-2r,p})} + \norm{u \p_t^r
v}_{L^2(I,W^{k-2r,p})}\\
&\qquad\leq
\norm{\p_t^ru}_{L^2(I,W^{k-2r,p})}\norm{v}_{L^\infty(I,L^\infty)} +
\norm{\p_t^ru}_{L^2(I, W^{1,p})}\norm{v}_{L^\infty(I,W^{k-2r,p})}\\
&\qquad \qquad+
\norm{u}_{L^\infty(I,W^{k-2r,p})}\norm{\p_t^r v}_{L^2(I, W^{1,p})} +
\norm{u}_{L^\infty(I,L^\infty)}\norm{\p_t^r v}_{L^2(I,W^{k-2r,p})}\\
&\qquad\leq
\norm{u}_{W^{s,k,p}}\norm{v}_{W^{s,k,p}}.
\end{split}
\end{equation*}
Further, for $1\leq i \leq r -1$
\begin{equation*}
\norm{(\p_t^i u) \p_t^{r- i} v}_{W^{k-2r,p}} \leq \norm{\p_t^i
u}_{L^\infty} \norm{\p_t^{r- i} v}_{W^{k-2r,p}} + \norm{\p_t^i
u}_{W^{k-2r,p}} \norm{\p_t^{r- i} v}_{L^\infty}
\end{equation*}
and
\begin{equation*}
\begin{split}
&\norm{(\p_t^i u) \p_t^{r- i} v}_{L^2(I,W^{k-2r,p})}\\
&\quad\leq
\norm{\p_t^i u}_{L^\infty(I, W^{1,p})} \norm{\p_t^{r- i}
v}_{L^2(I,W^{k-2r,p})} + \norm{\p_t^i u}_{L^2(I,W^{k-2r,p})} \norm{\p_t^{r-
i} v}_{L^\infty(I,W^{1,p})}\\
&\quad\leq
\norm{u}_{W^{s,k,p}}\norm{v}_{W^{s,k,p}}.
\end{split}
\end{equation*}
Finally, if $r=0$, then
$$
\norm{uv}_{W^{k,p}} \leq \norm{u}_{L^\infty}\norm{v}_{W^{k,p}} +
\norm{u}_{W^{k,p}}\norm{v}_{L^\infty}
$$
and
\begin{equation*}
\begin{split}
\norm{uv}_{L^2(I,W^{k,p})}
&\leq
\norm{u}_{L^\infty(I,L^\infty)}\norm{v}_{L^2(I,W^{k,p})} +
\norm{u}_{L^2(I,W^{k,p})}\norm{v}_{L^\infty(I,L^\infty)}\\
&\leq
\norm{u}_{W^{s,k,p}}\norm{v}_{W^{s,k,p}}.
\end{split}
\end{equation*}
This proves the corollary.
\end{proof}


\end{document}